\documentclass{amsart}

\usepackage{amsfonts}
\usepackage{amssymb}
\usepackage{amscd}
\usepackage{pictexwd,dcpic}
\usepackage{graphicx}
\usepackage{hyperref}
\hypersetup{
    colorlinks=true,
    linkcolor=black,
    filecolor=magenta, 
    urlcolor=cyan,
    citecolor=black,}
\usepackage[nameinlink,capitalize]{cleveref}
\usepackage{mathrsfs}
\usepackage{enumerate}

\newtheorem{theorem}{Theorem}[section]

\newtheorem{lemma}[theorem]{Lemma}
\newtheorem{algorithm}[theorem]{Algorithm}

\newtheorem{corollary}[theorem]{Corollary}
\newtheorem{proposition}[theorem]{Proposition}

\theoremstyle{definition}

\newtheorem{setting}[theorem]{Setting}
\newtheorem{notation}[theorem]{Notation}

\newtheorem{remark}[theorem]{Remark}
\newtheorem{definition}[theorem]{Definition}
\numberwithin{equation}{section}
\newtheorem{example}[theorem]{Example}

\def\coker{\mathop{\rm coker}}

\def\sym{\mathop{\rm Sym}}
\def\fitt{\mathop{\rm Fitt}}

\def\rk{\mathop{\rm rank}}

\def\spec{\mathop{\rm Spec}}
\def\min{\mathop{\rm min}}

\def\pf{\mathop{\rm Pf}}
\def\hgt{\mathop{\rm ht}}
\def\dim{\mathop{\rm dim}}

\def\gcd{\mathop{\rm gcd}}

\def\dep{\mathop{\rm depth}}

\def\reg{\mathop{\rm reg}}

\def\p{\mathfrak{p}}

\def\a{\mathfrak{a}}
\def\m{\mathfrak{m}}
\def\n{\mathfrak{n}}
\def\tt{\mathfrak{t}}

\def\A{\mathcal{A}}
\def\D{\mathcal{D}}
\def\DD{\mathscr{D}}
\def\LL{\mathscr{L}}

\def\rr{\mathcal{R}}
\def\ff{\mathcal{F}}
\def\gr{\mathcal{G}}

\def\Q{\mathcal{Q}}

\def\det{\operatorname{det}}

\def\x{\underline{x}}

\def\a{\underline{a}}
\def\T{\underline{T}}

\def\fitt{{\rm Fitt}}
\def\rt{{\rm rt}}
\def\J{\mathcal{J}}
\def\H{\mathcal{H}}
\def\K{\mathcal{K}}

\def\BB{\mathcal{B}}  
\def\BL{\mathcal{L}}

\crefname{lemma}{lemma}{lemmas}
\Crefname{lemma}{Lemma}{Lemmas}
\crefname{observation}{observation}{observations}
\Crefname{observation}{Observation}{Observations}

\crefname{algorithm}{algorithm}{algorithms}
\Crefname{algorithm}{Algorithm}{Algorithms}

\crefname{setting}{setting}{settings}
\Crefname{setting}{Setting}{Settings}
\crefname{theorem}{theorem}{theorems}
\Crefname{theorem}{Theorem}{Theorems}
\crefname{proposition}{proposition}{propositions}
\Crefname{proposition}{Proposition}{Propositions}
\crefname{remark}{remark}{remarks}
\Crefname{Remark}{Remark}{Remarks}
\crefname{corollary}{corollary}{corollaries}
\Crefname{Corollary}{Corollary}{Corollaries}
\crefname{definition}{definition}{definitions}
\Crefname{definition}{Definition}{Definitions}
\crefname{notation}{notation}{notations}
\Crefname{notation}{Notation}{Notations}
\crefname{section}{section}{sections}
\Crefname{section}{Section}{Sections}

\title[The Equations of Rees Algebras of Height Three Gorenstein Ideals]{The Equations of Rees Algebras of Height Three Gorenstein Ideals in Hypersurface Rings}

\author{Matthew Weaver}
\address{Department of Mathematics, University of Notre Dame, Notre Dame IN 46556, USA}
\email{mweaver6@nd.edu}
\date{}

\begin{document}
\maketitle

\begin{abstract}
We study the Rees algebra of a perfect Gorenstein ideal of codimension 3 in a hypersurface ring. We provide a minimal generating set of the defining ideal of these rings by introducing a modified Jacobian dual and applying a recursive algorithm. Once the defining equations are known, we explore properties of these Rees algebras such as Cohen-Macaulayness and Castelnuovo-Mumford regularity.
\end{abstract}

\section{Introduction}\label{intro}

In this paper we consider the Rees algebra of a particular class of ideals and explore the properties of these rings. For $I =(f_1,\ldots,f_n)$ an ideal of a Noetherian ring $R$, the Rees algebra of $I$ is the graded subalgebra $\rr(I) = R[f_1 t,\ldots, f_n t] = R\oplus It \oplus I^2t^2\oplus \cdots \subset R[t]$. Geometrically, $\rr(I)$ is the homogeneous coordinate ring of the blowup of $\spec(R)$ along the closed subscheme $V(I)$. There is a natural $R$-algebra epimorphism $\Psi:R[T_1,\ldots,T_n] \rightarrow \rr(I)$ given by $\Psi(T_i)= f_it$. The kernel $\J = \ker \Psi$ is the \textit{defining ideal} of $\rr(I)$ and is of great interest as $\Psi$ induces an isomorphism $\rr(I) \cong R[T_1,\ldots,T_n] / \J$. The search for a set of minimal generators of $\J$, the \textit{defining equations} of $\rr(I)$, has become a fundamental problem and has been studied to great extent in recent years (see e.g. \cite{MU,KPU2,Morey,UV,KM,Vasconcelos,HSV1,Johnson,CHW,KPU1,BCS,Weaver}).

Although this problem has been well-studied, the defining equations of Rees algebras are known in few cases. As $\J$ encodes all of the polynomial relations amongst a generating set of $I$, a complete solution requires some knowledge regarding the structure of $I$ and its syzygies. Much work has been accomplished for perfect ideals of grade two, which are generated by the maximal minors of an almost square matrix by the Hilbert-Burch theorem \cite{Eisenbud}. These ideals and their Rees algebras have been studied under a multitude of various assumptions (see e.g. \cite{Morey, MU,BM,CHW,KM,Lan1,Lan2,Weaver}). Furthermore, perfect Gorenstein ideals of grade three and their Rees algebras have been a topic of great interest in recent years. Similar to perfect ideals of grade two, these ideals have prescribed structures and resolutions. These ideals are generated by the submaximal Pfaffians of a square alternating matrix by the Buchsbaum-Eisenbud structure theorem \cite{BE}. The Rees algebras of these ideals have been studied in different settings using a variety of techniques (see e.g. \cite{Johnson,Morey,KPU2}). 


In this paper, we consider the Rees algebra of a perfect Gorenstein ideal of grade three in a \textit{hypersurface ring}. Whereas the defining equations of Rees rings have been studied to great length, most results within the literature require that the ideal in question belongs to $k[x_1,\ldots,x_d]$, a polynomial ring over a field $k$. There is strong geometric motivation to consider Rees algebras of ideals in this new setting. As the Rees ring is the algebraic realization of the blowup of $\spec(R)$ along $V(I)$, altering the ring is reflected by the blowup of a different scheme. There has been recent success in the way of determining the equations defining Rees algebras of perfect ideals with grade two in hypersurface rings in \cite{Weaver}. Expanding upon this, we consider perfect Gorenstein ideals of grade three in these rings and study the defining equations of their Rees algebras.

The objective of this paper is to extend one of the classical results within the study of Rees algebras to the setting of a hypersurface ring. In \cite{Morey}, Morey considered a linearly presented perfect Gorenstein ideal of grade three in $k[x_1,\ldots,x_d]$. The defining equations of its Rees ring were produced and it was shown that there is a single nontrivial equation, which can be identified as the greatest common divisor of the maximal minors of a Jacobian dual matrix. In our setting, we show that this fails to be the case, but that a similar phenomenon occurs upon modification and repetition. The main results \Cref{mainresult} and \Cref{depth}(a) are rephrased below.

\begin{theorem}
Let $S=k[x_1,\ldots,x_{d+1}]$ for $k$ an infinite field, $f\in S$ a homogeneous polynomial of degree $m$, and $R=S/(f)$. Let $I$ be a perfect Gorenstein $R$-ideal of grade 3 with alternating presentation matrix $\varphi$ consisting of linear entries. Let $\overline{\,\cdot\,}$ denote images modulo $(f)$. If $I$ satisfies $G_d$, $I_1(\varphi) = \overline{(x_1,\ldots,x_{d+1})}$, and $\mu(I) = d+1$, then the defining ideal $\J$ of $\rr(I)$ is 
$$\J =\overline{\BL_m + \big(\gcd I_{d+1}(\BB_m)\big)}$$
where the pair $(\BB_m,\BL_m)$ is the $m^{\text{th}}$ gcd-iteration of $(B,\LL)$, for $B$ a modified Jacobian dual with respect to $\x=x_1,\ldots,x_{d+1}$ and $\LL=(\x\cdot B)$. Additionally, $\rr(I)$ is almost Cohen-Macaulay and is Cohen-Macaulay if and only if $m=1$.
\end{theorem}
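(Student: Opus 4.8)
The plan is to first establish the structure of $\rr(I)$ by analyzing the defining ideal $\J$ and then deduce the depth and Cohen-Macaulay properties from a careful accounting of the resolution data. Since the first part of the theorem identifies $\J = \overline{\BL_m + (\gcd I_{d+1}(\BB_m))}$, I would begin by unwinding the recursive gcd-iteration construction: the pair $(B,\LL)$ starts with the modified Jacobian dual $B$ and the linear forms $\LL = (\x \cdot B)$, and at each step one passes from $(\BB_i, \LL_i)$ to $(\BB_{i+1}, \LL_{i+1})$ by extracting a gcd of maximal minors and adjusting. The key observation is that each iteration contributes exactly one new ``gcd'' equation, and after $m$ steps the process stabilizes because $f$ has degree $m$; tracking the degrees of these $m$ successive gcd equations, together with the degrees of the entries of $\BL_m$, gives a handle on the bigraded Betti numbers of $R[T_1,\ldots,T_{d+1}]/\J$.

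For the almost Cohen-Macaulay claim, I would show that $\dim \rr(I) = d+1$ (using that $R$ has dimension $d$ and $I$ has positive height, so the Rees algebra has dimension $d+1$), and then that $\dep \rr(I) \geq d$. The natural tool here is a short exact sequence relating $\rr(I)$ in the hypersurface ring $R = S/(f)$ to the corresponding Rees algebra over the polynomial ring $S$, or alternatively an approximation complex / the recursive structure of the gcd-iteration viewed as an iterated ``linkage-like'' process. I expect that each gcd-iteration step corresponds to splitting off a hypersurface section, so that $\rr(I)$ is built from $m$ successive hypersurface-type quotients of a Cohen-Macaulay ring; this would immediately give $\dep \rr(I) \geq \dim \rr(I) - 1$, i.e., the almost Cohen-Macaulay property, while making it plausible that equality $\dep = \dim$ fails precisely when $m \geq 2$. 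Concretely, when $m = 1$ there is a single trivial gcd-iteration and $\J = \overline{\BL_1 + (\gcd I_{d+1}(B))}$ reduces to the Morey-type situation (one nontrivial equation on top of the Jacobian-dual linear relations), where Cohen-Macaulayness is known; when $m \geq 2$, the extra gcd equations of higher degree obstruct the depth.

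For the ``only if'' direction of the Cohen-Macaulay statement, I would compute an invariant that detects the failure, most naturally via local cohomology: showing $H^{d}_{\m}(\rr(I)) \neq 0$ when $m \geq 2$, where $\m$ is the homogeneous maximal ideal of $R[T_1,\ldots,T_{d+1}]$. Equivalently, one can exhibit a specific element of the canonical module or compute the $a$-invariant / Castelnuovo-Mumford regularity: the $m$ successive gcd equations have degrees forming an arithmetic-type progression in the $x$-grading, and their presence forces a nonzero graded piece of $H^d_{\m}$ in the appropriate degree. The cleanest route is probably to use the explicit free resolution of $\rr(I)$ over $R[T_1,\ldots,T_{d+1}]$ that the recursive algorithm produces — the algorithm that yields the generators of $\J$ should, with bookkeeping, yield the whole resolution — and then read off that its length is $d+1$ (not $d$) exactly when $m \geq 2$, whereas for $m = 1$ it has length $d$.

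The main obstacle I anticipate is controlling the depth increment across the gcd-iteration: showing that passing from $(\BB_i,\LL_i)$ to $(\BB_{i+1},\LL_{i+1})$ drops depth by exactly one (no more), which requires knowing that the relevant gcd element is a nonzerodivisor on the appropriate quotient and that no unexpected collapse or cancellation occurs in the resolution. This is the step where the hypotheses $G_d$, $I_1(\varphi) = \overline{(x_1,\ldots,x_{d+1})}$, and $\mu(I) = d+1$ must all be used — $G_d$ to control the heights of the ideals of minors of $\varphi$ (and hence of $B$), and the generation/minimal-number hypotheses to ensure the modified Jacobian dual $B$ has the expected size and that $\BL_1 = (\x \cdot B)$ already accounts for all the ``linear-type'' relations. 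Once the depth lower bound $\dep \rr(I) \geq d$ and the exact-length-$(d+1)$ resolution for $m \geq 2$ are in hand, the equivalence with $m=1$ is immediate from the Auslander–Buchsbaum formula.
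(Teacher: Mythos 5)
There is a genuine gap: your proposal never actually proves the central claim that $\J$ equals the output of the gcd-iteration. You treat the identification $\J =\overline{\BL_m + (\gcd I_{d+1}(\BB_m))}$ as something to be ``unwound'' by degree bookkeeping, but tracking degrees only describes the candidate ideal; it does not show the containment $\J\subseteq \overline{\BL_m + (\gcd I_{d+1}(\BB_m))}$. In the paper this is the entire content of Sections 3--5: one first shows $\J=\overline{\LL:(x_1,\ldots,x_{d+1})^m}$ (using $G_d$ plus the local cohomology degree bounds of Kustin--Polini--Ulrich applied to the Kustin--Ulrich complexes), then passes to the linear-type Rees ring $\rr(J)$ over $S$, links the ideal $(\widetilde{x_1,\ldots,x_{d+1}})$ to an ideal $\widetilde{\K}$ coming from a Morey-type ideal $J'$ in $d$ variables, identifies $\widetilde{\A}$ with $\widetilde{f}\,\widetilde{\K}^{(m)}/\widetilde{x_{d+1}}^m$ and the gcd-iteration output with $\widetilde{f}\,\widetilde{\K}^{m}/\widetilde{x_{d+1}}^m$, and finally proves $\widetilde{\K}^m=\widetilde{\K}^{(m)}$ via the Simis--Vasconcelos criterion (which needs $\hgt I_d(B(\psi))\geq 2$). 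None of these steps, nor any substitute for them, appears in your outline.

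The depth argument also does not go through as stated. The claim that $\rr(I)$ is ``built from $m$ successive hypersurface-type quotients of a Cohen-Macaulay ring'' and that this ``immediately'' gives $\dep\rr(I)\geq\dim\rr(I)-1$ is not a valid inference: each such quotient can drop depth by one while the dimension stays fixed (the new equations are zerodivisors modulo the previous ones once $m\geq 2$), so after $m$ steps this heuristic only yields $\dep\geq\dim-m$. The paper instead gets $\dep\rr(I)\geq d$ from the isomorphism $\widetilde{\A}\cong\widetilde{\K}^m$ together with the explicit presentation of $\widetilde{\K}^m\cong\sym_m(\widetilde{\K})$ afforded by $\widetilde{\K}$ being a strongly Cohen--Macaulay almost complete intersection of linear type; this is the step your ``main obstacle'' paragraph gestures at but does not supply. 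Finally, for the failure of Cohen--Macaulayness when $m\geq 2$, the paper simply invokes the Polini--Ulrich criterion rather than producing a minimal free resolution of $\rr(I)$; your suggestion that the gcd-iteration ``should, with bookkeeping, yield the whole resolution'' is speculative --- the algorithm produces generators of $\A$, not syzygies, and no such resolution is constructed in the paper.
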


Traditionally, one searches for the nontrivial equations of Rees algebras by using a \textit{Jacobian dual} matrix corresponding to a presentation matrix of the ideal. However, in the setting above, the Jacobian dual is insufficient and such a matrix must be altered. Repeating the construction in \cite{Weaver}, we introduce a \textit{modified Jacobian dual} matrix. A recursive algorithm of \textit{gcd-iterations} is then developed in order to produce the equations of $\J$. This iterative procedure is similar to the methods used in \cite{BM,CHW,Weaver}.

We now describe how this paper is organized. In \Cref{prelims} we briefly review the preliminary material on Rees algebras of ideals necessary for the scope of this paper. Additionally, we restate the result of Morey \cite[4.3]{Morey} and describe some properties of the Jacobian dual of an alternating matrix. In \Cref{hypring} we begin the study of the Rees algebra $\rr(I)$, for $I$ a linearly presented perfect Gorenstein ideal of grade three in a hypersurface ring $R=S/(f)$. We introduce a perfect Gorenstein ideal $J$ of grade three in the polynomial ring $S$ and compare the Rees algebras $\rr(I)$ and $\rr(J)$. We also introduce the modified Jacobian dual matrix. In \Cref{iterationssec} we introduce the recursive algorithm of \textit{gcd-iterations}, which produces equations belonging to the defining ideal. We then give a sufficient condition for when the defining ideal agrees with the ideal obtained from this algorithm. In \Cref{defidealsec} we show that this condition is satisfied and that the method of gcd-iterations produces a \textit{minimal} generating set of $\J$. Properties such as Cohen-Macaulayness and Castelnuovo-Mumford regularity of $\rr(I)$ are then studied.

\section{ Preliminaries}\label{prelims}

We now introduce the necessary conventions and preliminary information required for this paper. 

\subsection{Rees Algebras of Ideals}

 Let $R$ be a Noetherian ring and $I=(f_1, \ldots,f_n)$ an $R$-ideal of positive grade. There is a natural homogeneous epimorphism of $R$-algebras
$$\Psi:\, R[T_1,\ldots,T_n] \longrightarrow \rr(I)$$
given by $T_i\mapsto f_it$. This map induces an isomorphism
$$\rr(I) \cong R[T_1,\ldots,T_n]/\J$$
for $\J = \ker \Psi$, which is the \textit{defining ideal} of $\rr(I)$. Additionally, any minimal generator of $\J$ is called a \textit{defining equation} of $\rr(I)$. The map $\Psi$ factors through the symmetric algebra $\sym(I)$ via the natural map
$$\sigma:\, R[T_1,\ldots,T_n] \longrightarrow \sym(I)$$
where the kernel $\LL=\ker \sigma$ can be described easily from a presentation of $I$. Indeed, if $R^m\overset{\varphi}{\rightarrow}R^n \rightarrow I\rightarrow 0$ is any presentation of $I$, then $\LL$ is generated by the linear forms $\ell_1,\ldots,\ell_m$ where
$$[T_1\ldots T_n]\cdot \varphi = [\ell_1 \ldots \ell_m].$$

We note that if $R$ is a standard graded ring, each of the maps and ideals above are bihomogeneous. As $\Psi$ factors through $\sigma$, we have the containment of ideals $\LL\subseteq \J$. This containment is often strict, but if $\LL = \J$ we say that $I$ is of \textit{linear type}. As mentioned, $\Psi$ factors through $\sigma$, hence there is a natural epimorphism $\sym(I) \rightarrow \rr(I)$ with kernel $\Q =\J/\LL$. This ideal $\Q$ is typically used to measure how greatly $\sym(I)$ and $\rr(I)$ differ when $I$ is not an ideal of linear type.

We now introduce a common source of higher-degree generators of $\J$. With $R^m\overset{\varphi}{\rightarrow}R^n \rightarrow I\rightarrow 0$ a presentation of $I$ as before, there exists an $r\times m$ matrix $B(\varphi)$ consisting of linear entries in $R[T_1,\ldots,T_n]$ with
$$[T_1 \ldots T_n] \cdot \varphi= [x_1\ldots x_r]\cdot B(\varphi) $$
where $(x_1,\ldots,x_r)$ is an ideal containing the entries of $\varphi$. The matrix $B(\varphi)$ is called a \textit{Jacobian dual} of $\varphi$, with respect to the sequence $x_1,\ldots,x_r$. Notice that $[x_1\ldots x_r]\cdot B(\varphi) = [\ell_1\ldots \ell_m]$, where $\ell_1,\ldots,\ell_m$ are the equations defining $\sym(I)$ as before. We note that this matrix is not unique in general. However, if $R=k[x_1,\ldots,x_d]$ and the entries of $\varphi$ are linear, there is a Jacobian dual $B(\varphi)$, with respect to $x_1,\ldots,x_d$, consisting of linear entries in $k[T_1,\ldots,T_n]$ which is unique.

The ideal $I$ is said to satisfy the condition $G_s$ if $\mu(I_\p) \leq \dim R_\p$ for all $\p \in V(I)$ with $\dim R_\p\leq s-1$. Equivalently, $I$ satisfies $G_s$ if and only if $\hgt \fitt_j(I)\geq j+1$ for all $1\leq j\leq s-1$, where $\fitt_j(I) = I_{n-j}(\varphi)$ is the $j^{\text{th}}$ \textit{Fitting ideal} of $I$ for any presentation $\varphi$ as above. If $I$ satisfies $G_s$ for all $s$, $I$ is said to satisfy $G_\infty$.

Lastly, we introduce two algebras related to $\rr(I)$. The \textit{associated graded ring} of $I$ is $\gr(I) =\rr(I) \otimes_R R/I\cong  \rr(I)/I\rr(I)$. If $R$ is a local ring with maximal ideal $\m$ and residue field $k$, the \textit{special fiber ring} of $I$ is $\ff(I) = \rr(I)\otimes_R k \cong \rr(I)/\m \rr(I)$. Its Krull dimension $\ell(I) = \dim \ff(I)$ is the \textit{analytic spread} of $I$.

\subsection{Perfect Gorenstein Ideals of Grade Three}
The Rees algebras of perfect Gorenstein ideals of grade three are a rich source of interesting phenomena and anomalies. These ideals are a natural candidate to study due to their prescribed structures and resolutions. By the Buchsbaum-Eisenbud theorem \cite[2.1]{BE}, these ideals are presented by an alternating matrix. Moreover, these ideals can be generated by the submaximal Pfaffians of such a matrix. We begin by recalling a classic result within the study of equations of Rees algebras due to Morey and we restate it for reference.

\begin{theorem}[{\cite[4.3]{Morey}}]\label{Moreyresult}
Let $R=k[x_1,\ldots,x_d]$ for $k$ a field, and let $I$ be a perfect Gorenstein $R$-ideal of grade 3 with alternating presentation matrix $\varphi$ consisting of linear entries. If $I$ satisfies the condition $G_d$ and $\mu(I)=d+1$, then the defining ideal of $\rr(I)$ is 
$$\J = \LL + \big(\gcd I_d(B(\varphi))\big)$$
where $B(\varphi)$ is the Jacobian dual of $\varphi$ with respect to $\x=x_1,\ldots,x_d$ and $\LL = (\x\cdot B(\varphi))$. Additionally, $\rr(I)$ is Cohen-Macaulay.
\end{theorem}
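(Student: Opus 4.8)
The plan is to prove the theorem by establishing the two containments between $\J$ and $\LL + \big(\gcd I_d(B(\varphi))\big)$ and then reading off Cohen--Macaulayness of $\rr(I)$ from the free resolution produced along the way. Write $A = R[T_1,\dots,T_{d+1}]$; since $\mu(I)=d+1$ and $\varphi$ is $(d+1)\times(d+1)$ alternating, the Jacobian dual $B:=B(\varphi)$ is $d\times(d+1)$, and set $g:=\gcd I_d(B)$. First I would record the numerology. As $R=k[x_1,\dots,x_d]$ is a domain and $I\ne 0$, $\rr(I)$ is a domain, so $\J$ is prime; moreover $\dim\rr(I)=d+1$, whence $\hgt\J=d$ inside the $(2d+1)$-dimensional ring $A$. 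The Huneke--Rossi dimension formula, together with $\mu(I)=d+1=\dim R+1$ and $G_d$, gives $\dim\sym(I)=d+1$ as well, so $\hgt\LL=d$ and $\J$ is a minimal prime of $\LL$. Finally, since the entries of $\varphi$ lie in $\m$ while $\mu(I)>\dim R$, $I$ is not of linear type, so $\Q=\J/\LL$ is nonzero; because $\sym(I)$ and $\rr(I)$ agree after inverting $R\setminus\{0\}$, $\Q$ is $R$-torsion, and the theorem amounts to saying $\Q$ is generated by the image of $g$.

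For the containment $\LL+(g)\subseteq\J$, the part $\LL\subseteq\J$ is formal. To get $g\in\J$, reduce the defining identity $[T_1\cdots T_{d+1}]\cdot\varphi=[x_1\cdots x_d]\cdot B$ modulo $\J$: the entries of $[T_1\cdots T_{d+1}]\cdot\varphi$ are the generators $\ell_j$ of $\LL\subseteq\J$, so $[\,\overline{x_1}\cdots\overline{x_d}\,]\cdot\overline{B}=0$ over the domain $\rr(I)$ with $(\overline{x_1},\dots,\overline{x_d})\ne 0$; hence $\overline{B}$ has rank at most $d-1$ and $I_d(B)\subseteq\J$. Writing each maximal minor of $B$ as $g$ times a cofactor $D_j'$ and using that $\J$ is prime, it remains to see the $D_j'$ cannot all lie in $\J$; here I would combine $\gcd(D_1',\dots,D_{d+1}')=1$ with the relation $\sum_j\ell_jD_j'=0$ (obtained from $\sum_j\ell_jD_j=0$, where the $D_j$ are the signed maximal minors, by cancelling $g$ in the domain $A$) and the alternating structure of $\varphi$ to force the locus $V(D_1',\dots,D_{d+1}')$ to meet $V(\J)$ in lower dimension, so that $g\in\J$ and $\LL+(g)\subseteq\J$.

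The reverse containment is the heart of the matter. I would try to show $A/(\LL+(g))$ is Cohen--Macaulay of dimension $d+1$ via an explicit finite free $A$-resolution of length $d$, assembled by a mapping-cone construction out of the Buchsbaum--Eisenbud resolution $0\to R\to R^{d+1}\to R^{d+1}\to I\to 0$ (middle map $\varphi$), the $d+1$ linear forms of $\LL$, the Jacobian dual $B$, and the single extra generator $g$; acyclicity would be checked by the Buchsbaum--Eisenbud exactness criterion, with $G_d$, the identity $I_1(\varphi)=\m$, and the Pfaffian identities supplying the needed height bounds on the intervening ideals of minors. Granting this, $A/(\LL+(g))$ is unmixed of dimension $d+1$, hence has no component of dimension $\le d$; since $V(\LL+(g))=V(\LL)\cap V(g)$ and $g$ lies in $\J$ but (by the analysis of the previous paragraph, or by a direct check) not in the remaining minimal primes of $\LL$, those other components drop in dimension, forcing $\sqrt{\LL+(g)}=\J$. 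Thus $\LL+(g)$ is $\J$-primary, and to upgrade to equality I would verify $(\LL+(g))_\J=\J_\J$ by localizing at $\J$ and at the generic point $(0)$ of $\spec R$ — where $\LL$ is already a complete intersection of $d$ independent linear forms, so $\LL_{(0)}=\J_{(0)}$ and $g_{(0)}\in\J_{(0)}$ — these local computations identifying $\Q_\J$ with $(\overline{g})$ and yielding $\LL+(g)=\J$. The same length-$d$ resolution of $\rr(I)=A/\J$ then gives $\operatorname{pd}_A\rr(I)=d=\hgt\J$, i.e.\ $\rr(I)$ is Cohen--Macaulay.

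The step I expect to be the main obstacle is constructing and verifying that length-$d$ resolution: one must guess the correct complex — in particular how the single relation $g$ caps off the Jacobian-dual data — and push through the exactness criterion, which is precisely where the alternating structure of $\varphi$ and the $G_d$ condition must be used in concert. The only comparably delicate point outside the resolution is confirming that it is $\gcd I_d(B)$, rather than the full ideal $I_d(B)$, that cuts out the defect $\Q$; the remaining ingredients — the numerology, the rank argument for the forward containment, and the generic-point reductions — are essentially formal once the resolution is in hand.
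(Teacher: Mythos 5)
First, a point of comparison: the paper does not prove \Cref{Moreyresult} at all — it is quoted from Morey and used as a black box (only the factorization $\det B_i=(-1)^{i+1}T_i\,g$ of \Cref{JDminors} is proved here), so your proposal can only be judged on its own terms. The parts you carry out are correct. The numerology is right, and the forward containment $\LL+(g)\subseteq\J$ is in fact simpler than you make it: by \Cref{JDminors} the cofactors $D_j'$ are just $\pm T_j$, and since $\J$ is prime and cannot contain all the $T_j$ (each $f_jt$ is nonzero in the domain $\rr(I)$), the inclusion $I_d(B)=(g)(T_1,\ldots,T_{d+1})\subseteq\J$ immediately yields $g\in\J$, with no need for the gcd-of-cofactors or dimension-of-loci argument. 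Your endgame is also sound modulo its inputs: granted unmixedness of $\LL+(g)$, one needs $g$ to avoid the other height-$d$ minimal prime $\m A$ of $\LL$ (clear, since $0\neq g\in k[T_1,\ldots,T_{d+1}]$), and agreement at $\J$ does follow from agreement at the generic point of $R$ because $\J\cap R=0$.

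The genuine gap is the step you yourself flag: both the reverse containment and the Cohen--Macaulayness are made to rest on a length-$d$ free $A$-resolution of $A/(\LL+(g))$ that is described only as ``a mapping-cone construction'' out of the Buchsbaum--Eisenbud resolution, the linear forms, the Jacobian dual, and $g$, to be checked by the acyclicity criterion. This is not a verification that can be deferred: constructing such resolutions is the hard part of this subject (it occupies most of \cite{KPU2} even in the linearly presented case); a mapping cone over the sum $\LL+(g)$ has no a priori reason to be acyclic, since $g$ is a zerodivisor on $A/\LL$ (it lies in the minimal prime $\J$ of $\LL$ but not in $\m A$); and since no differentials are specified, there is nothing to which the Buchsbaum--Eisenbud criterion could even be applied. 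As written, the proposal is a strategy whose central step is missing rather than a proof. A more standard way to close it — and essentially Morey's, which this paper generalizes in \Cref{colons}--\Cref{DandA} — is to (i) obtain Cohen--Macaulayness of $\rr(I)$ from the theory of second analytic deviation one ideals or residual intersections, (ii) show $\J=\LL:(x_1,\ldots,x_d)$ using $G_d$ and the linear presentation, and (iii) identify this colon with $\LL+(g)$ by a linkage or multiplicity argument.
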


 Here $\gcd I_d(B(\varphi))$ denotes the greatest common divisor of the $d\times d$ minors of the Jacobian dual $B(\varphi)$, which consists of entries in $k[T_1,\ldots,T_{d+1}]$. Whereas this result does describe a generating set of the defining ideal, for our purposes we will require a more precise description in how this greatest common divisor arises. In order to describe how the minors of the Jacobian dual above factor, we introduce a lemma of Cramer's rule.

\begin{lemma}[{\cite[4.3]{BM}}]\label{crlemma}
Let $R$ be a commutative ring, $[a_1\ldots a_r]$ a $1\times r$ matrix, and $M$ an $r\times (r-1)$ matrix with entries in $R$. For $1\leq t\leq r$, let $M_t$ denote the $(r-1)\times (r-1)$ submatrix of $M$ obtained by deleting the $t^{\text{th}}$ row of $M$ and set $m_t=\det M_t$. Then in the ring $R/(\a\cdot M)$
$$\overline{a_t}\cdot\overline{m_k} = (-1)^{t-k} \overline{a_k}\cdot \overline{m_t}$$
for all $1\leq k,t\leq r$.
\end{lemma}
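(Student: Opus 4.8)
The plan is to work entirely inside $\overline R := R/(\a\cdot M)$, where the relation $\a\cdot M=0$ holds as an \emph{exact} identity, and to combine it with multilinearity of the determinant so that no element of $R$ ever needs to be inverted. If $t=k$ there is nothing to prove, and the asserted identity is unchanged under interchanging $t$ and $k$ (both sides simply acquire the common factor $(-1)^{t-k}=(-1)^{k-t}$), so it suffices to treat the case $t<k$.

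Write $\rho_1,\dots,\rho_r$ for the rows of $\overline M$, so that $\overline{m_k}=\det\overline{M_k}$, where $\overline{M_k}$ is the $(r-1)\times(r-1)$ matrix whose rows are $\{\rho_s : s\ne k\}$ listed in the natural order. Since $t\ne k$, the row $\rho_t$ occurs in $\overline{M_k}$, so $\overline{a_t}\cdot\overline{m_k}$ is the determinant of the matrix obtained from $\overline{M_k}$ by replacing its row $\rho_t$ with $\overline{a_t}\,\rho_t$. Now $\overline{\a\cdot M}=0$ says precisely $\overline{a_t}\,\rho_t=-\sum_{s\ne t}\overline{a_s}\,\rho_s$; substitute this into that row and expand the determinant multilinearly over the sum. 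Every term indexed by $s\notin\{t,k\}$ is the determinant of a matrix in which $\rho_s$ occurs twice — once in its own slot and once in the slot vacated by $\rho_t$ — hence vanishes, and only the term $s=k$ survives. This gives $\overline{a_t}\cdot\overline{m_k}=-\,\overline{a_k}\,\det D$, where $D$ is $\overline{M_k}$ with $\rho_k$ inserted into the old slot of $\rho_t$.

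It remains to identify $\det D$ with $\pm\,\overline{m_t}$. The matrix $D$ has row set $\{\rho_s : s\ne t\}$, which is exactly that of $\overline{M_t}$, but ordered with $\rho_k$ sitting in the position $\rho_t$ occupied in $\overline{M_k}$; transporting $\rho_k$ to its natural position in $\overline{M_t}$ is a cyclic shift past $k-t-1$ rows, so $\det D=(-1)^{k-t-1}\overline{m_t}$. Therefore $\overline{a_t}\cdot\overline{m_k}=-(-1)^{k-t-1}\overline{a_k}\cdot\overline{m_t}=(-1)^{k-t}\overline{a_k}\cdot\overline{m_t}=(-1)^{t-k}\overline{a_k}\cdot\overline{m_t}$, which is the claim.

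The only genuinely fiddly point is this sign count; everything else is bookkeeping. I would emphasize that the argument never divides by anything — the relation $\a\cdot M=0$ is used purely as an identity in the quotient — which is what makes the statement valid over an arbitrary commutative ring. (One cannot instead just argue that $\a$ and the vector of signed maximal minors of $M$ are proportional as syzygies of the transpose of $M$: proportionality of syzygies can fail over a non-domain.)
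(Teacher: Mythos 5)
Your argument is correct. Note that the paper itself gives no proof of this lemma --- it is quoted from \cite[4.3]{BM} --- so there is nothing internal to compare against; your row-substitution argument (replace $\overline{a_t}\rho_t$ by $-\sum_{s\neq t}\overline{a_s}\rho_s$ in $\overline{M_k}$, kill the repeated-row terms by multilinearity, and track the $(-1)^{k-t-1}$ row permutation) is precisely the standard Cramer's-rule proof one finds in \cite{BM}, and your sign bookkeeping checks out, e.g.\ for $r=3$, $t=1$, $k=2$ it reproduces $\overline{a_1}\,\overline{m_2}=-\overline{a_2}\,\overline{m_1}$.
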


With this, we may describe how the maximal minors of $B(\varphi)$ factor in the setting of \Cref{Moreyresult}.

\begin{proposition}\label{JDminors}
With the assumptions of \cref{Moreyresult}, let $B_i$ denote the submatrix obtained by deleting the $i^{\text{th}}$ column of $B(\varphi)$. There exists a polynomial $g \in k[T_1,\ldots, T_{d+1}]$ such that for all $1\leq j \leq d+1$, one has $ \det B_i = (-1)^{i+1} T_i \cdot g$. 
\end{proposition}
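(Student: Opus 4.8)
The plan is to exploit the alternating structure of $\varphi$ to produce an explicit syzygy of the Jacobian dual $B(\varphi)$, and then extract the factorization of its maximal minors from the Cramer's rule identity in \Cref{crlemma}.

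First I would make the rows of $B(\varphi)$ explicit. Since the entries of $\varphi$ are linear forms in $x_1,\ldots,x_d$, we may write $\varphi=\sum_{k=1}^{d}x_k\varphi_k$ with $\varphi_k\in k^{(d+1)\times(d+1)}$; each $\varphi_k$ is again alternating (compare coefficients of $x_k$ in $\varphi^{\mathsf T}=-\varphi$ and in the vanishing diagonal of $\varphi$). Comparing coefficients of $x_k$ on the two sides of the defining relation $[T_1\,\cdots\,T_{d+1}]\,\varphi=[x_1\,\cdots\,x_d]\,B(\varphi)$ then identifies the $k^{\text{th}}$ row of $B(\varphi)$ with $[T_1\,\cdots\,T_{d+1}]\,\varphi_k$. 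As each $\varphi_k$ is alternating, $[T_1\,\cdots\,T_{d+1}]\,\varphi_k\,[T_1\,\cdots\,T_{d+1}]^{\mathsf T}=0$, whence
$$B(\varphi)\cdot[T_1\,\cdots\,T_{d+1}]^{\mathsf T}=0,\qquad\text{i.e.}\qquad [T_1\,\cdots\,T_{d+1}]\cdot B(\varphi)^{\mathsf T}=0$$
in $k[T_1,\ldots,T_{d+1}]$.

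Next I would apply \Cref{crlemma} with $r=d+1$ to the $1\times(d+1)$ row matrix $[T_1\,\cdots\,T_{d+1}]$ and the $(d+1)\times d$ matrix $M=B(\varphi)^{\mathsf T}$. The point of the previous step is that now $\a\cdot M=0$, so the conclusion of the lemma is a genuine polynomial identity over $k[T_1,\ldots,T_{d+1}]$; since deleting the $t^{\text{th}}$ row of $B(\varphi)^{\mathsf T}$ is the transpose of deleting the $t^{\text{th}}$ column of $B(\varphi)$, it reads
$$T_t\,\det B_k=(-1)^{t-k}\,T_k\,\det B_t\qquad(1\le k,t\le d+1),$$
where $B_i$ denotes $B(\varphi)$ with its $i^{\text{th}}$ column removed. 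Taking $k=1$ gives $T_t\det B_1=(-1)^{t-1}T_1\det B_t$ for every $t$; since $k[T_1,\ldots,T_{d+1}]$ is a UFD and $T_1,T_t$ are coprime when $t\ne1$, this forces $T_1\mid\det B_1$. Setting $g:=\det B_1/T_1\in k[T_1,\ldots,T_{d+1}]$ and cancelling $T_1$ (legitimate in a domain) yields $\det B_i=(-1)^{i-1}T_i\,g=(-1)^{i+1}T_i\,g$ for all $i$, which is the assertion; if all the $\det B_i$ vanish one simply takes $g=0$ (the displayed identities show they either all vanish or are all nonzero).

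I do not anticipate a genuine obstacle here: the alternating hypothesis does the real work by handing us the kernel vector $[T_1\,\cdots\,T_{d+1}]^{\mathsf T}$, and everything afterward is Cramer's rule together with unique factorization in $k[T_1,\ldots,T_{d+1}]$. The only delicate point is the index bookkeeping---correctly reading off the rows of $B(\varphi)$ from the coefficient comparison, and correctly matching ``delete column $i$ of $B(\varphi)$'' with ``delete row $i$ of $B(\varphi)^{\mathsf T}$'' when invoking \Cref{crlemma}.
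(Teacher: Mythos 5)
Your proof is correct and follows essentially the same route as the paper: establish $B(\varphi)\cdot[T_1\,\cdots\,T_{d+1}]^{\mathsf T}=0$ from the alternating property of $\varphi$, apply \Cref{crlemma} to $[T_1\,\cdots\,T_{d+1}]$ and $B(\varphi)^{\mathsf T}$, and extract $g$ by unique factorization. The only (immaterial) difference is that you obtain the kernel relation by decomposing $\varphi=\sum_k x_k\varphi_k$ into constant alternating matrices, whereas the paper deduces it directly from $[\,\x\,]\cdot B(\varphi)\cdot[\,\T\,]^{\mathsf T}=[\,\T\,]\cdot\varphi\cdot[\,\T\,]^{\mathsf T}=0$ together with the fact that the entries of $B(\varphi)$ lie in $k[T_1,\ldots,T_{d+1}]$.
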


\begin{proof}
Writing $\x=x_1,\ldots,x_d$ and $\T =T_1,\ldots,T_{d+1}$ for the two sequences of indeterminates, we claim that $B(\varphi) \cdot [\,\T\,]^t = 0$. As $[\,\x\,] \cdot B(\varphi) = [\,\T\,] \cdot \varphi$, we have 
$$[\,\x\,] \cdot B(\varphi) \cdot [\,\T\,]^t = [\,\T\,] \cdot \varphi \cdot [\,\T\,]^t =0 $$
since $\varphi$ is an alternating matrix. As $\varphi$ consists of linear entries in $k[x_1,\ldots,x_d]$, the entries of $B(\varphi)$ belong to $k[T_1,\ldots,T_{d+1}]$. Thus it follows that $B(\varphi) \cdot [\,\T\,]^t = 0$. Now applying \cref{crlemma} to $[\,\T\,]$ and the transpose of $B(\varphi)$, it follows that 
$$T_i \cdot (\det B_j) = (-1)^{i-j} \,T_j \cdot (\det B_i)$$
in $k[T_1,\ldots,T_{d+1}]$ for all $1\leq i,j\leq d+1$, and the claim follows.
\end{proof}

Notice that the equation in \Cref{JDminors} is precisely the greatest common divisor of the minors of $B(\varphi)$, $g=\gcd I_d(B(\varphi))$, as in \Cref{Moreyresult}.

\section{Ideals of Hypersurface Rings}\label{hypring}

We now begin our study of the Rees algebra $\rr(I)$, for $I$ a perfect Gorenstein ideal of grade three in a hypersurface ring. We introduce a second ideal $J$ which is also perfect Gorenstein of grade three and is closely related to $I$. We study the relation between the Rees rings $\rr(I)$ and $\rr(J)$, and their defining ideals.

\begin{setting}\label{setting1}
Let $S=k[x_1,\ldots,x_{d+1}]$ for $k$ an infinite field, $f\in S$ a homogeneous polynomial of degree $m\geq 1$, and $R= S/(f)$. Let $I$ be a perfect Gorenstein $R$-ideal of grade 3 with alternating presentation matrix $\varphi$ consisting of linear entries in $R$. Further assume that $I$ satisfies the condition $G_d$, $\mu(I)=d+1$, and $I_1(\varphi) = \overline{(x_1,\ldots,x_{d+1})}$.
\end{setting}

Notice that $d$ is necessarily even by \cite[2.2]{BE}. Following the path of \cite{Weaver}, we immediately return to the polynomial ring and produce an $S$-ideal related to $I$, which will also be perfect and Gorenstein of grade three.

\begin{notation}\label{notation1}
Let $\overline{\,\cdot\,}$ denote images  modulo the ideal $(f)$ and let $\psi$ be an $(d+1)\times (d+1)$ alternating matrix consisting of linear entries in $S$ with $I_1(\psi) =(x_1,\ldots,x_{d+1})$, such that $\varphi = \overline{\psi}$. Writing $[\ell_1 \ldots \ell_{d+1}]= [T_1 \ldots T_{d+1}]  \cdot \psi$, we consider the $S[T_1,\ldots,T_{d+1}]$-ideal $\LL=(\ell_1,\ldots,\ell_{d+1},f)$.  
\end{notation}


Certainly, such a matrix $\psi$ exists and we note that it is unique and automatically has $I_1(\psi) =(x_1,\ldots,x_{d+1})$ if $m\geq 2$. If $m=1$, then $\psi$ is not unique, but any such matrix may be chosen.

\begin{proposition}\label{Jlineartype}
There exists a perfect Gorenstein $S$-ideal $J$ with grade 3, which is presented by $\psi$. Additionally, $J$ is of linear type.
\end{proposition}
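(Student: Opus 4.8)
The plan is to establish the existence of $J$ through the Buchsbaum--Eisenbud structure theorem and then deduce linear type from the standard criterion for ideals that are strongly Cohen--Macaulay and satisfy $G_\infty$.

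For existence, I would set $J=\pf_d(\psi)$, the ideal of submaximal (that is, $d\times d$) Pfaffians of the $(d+1)\times(d+1)$ alternating matrix $\psi$. Since $\psi$ has linear entries these Pfaffians are forms of degree $d/2$, so $J$ is homogeneous, and because $\varphi=\overline{\psi}$ we get $\overline{J}=\pf_d(\varphi)=I$. The crux is to pin down the grade of $J$ in $S$. As $R$ is Cohen--Macaulay with $\grd_R I=\hgt_R I=3$ and $f$ is a nonzerodivisor on the Cohen--Macaulay ring $S$, the usual behaviour of heights modulo a nonzerodivisor gives $\hgt_S\bigl(J+(f)\bigr)=\hgt_R I+1=4$, hence $\hgt_S J\ge 3$ and so $\grd_S J\ge 3$. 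The Buchsbaum--Eisenbud structure theorem \cite{BE}, applied to the odd alternating matrix $\psi$, then does the rest: it forces $\grd_S J=3$, shows $S/J$ is perfect of grade $3$ and Gorenstein, and displays $0\to S\to S^{d+1}\xrightarrow{\psi}S^{d+1}\to S\to S/J\to 0$ as a free resolution; in particular $\psi$ presents $J$.

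For linear type, I would first verify that $J$ satisfies $G_\infty$, which by the Fitting-ideal criterion means $\hgt_S I_t(\psi)\ge d+2-t$ for all $1\le t\le d$. The case $t=1$ is immediate since $I_1(\psi)=(x_1,\ldots,x_{d+1})$. For $t\ge 2$ I would use that $\psi$ is alternating, so over any field it has even rank; this forces $\sqrt{I_{2k-1}(\psi)}=\sqrt{I_{2k}(\psi)}=\sqrt{\pf_{2k}(\psi)}$, reducing the problem to lower bounds on the heights of the Pfaffian ideals $\pf_{2k}(\psi)$. Those bounds I would pull back from $R$: the hypothesis that $I$ satisfies $G_d$, together with the same even-rank identity for $\varphi=\overline{\psi}$, gives $\hgt_R\pf_{2k}(\varphi)\ge d+3-2k$, and since $\pf_{2k}(\varphi)=\overline{\pf_{2k}(\psi)}$, lifting along the nonzerodivisor $f$ exactly as above yields $\hgt_S\pf_{2k}(\psi)\ge d+3-2k$. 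Assembling the cases shows $J$ satisfies $G_\infty$. It is worth emphasizing that this works precisely because $\mu(J)=d+1=\dim S$, so $G_d$ for $I$ is already strong enough to force $G_\infty$ for $J$ --- the contrast with Morey's setting, where $\mu(I)=\dim R+1$, $G_\infty$ fails, and an extra defining equation appears.

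Finally, with $G_\infty$ in hand I would invoke that a perfect Gorenstein ideal of grade three is strongly Cohen--Macaulay: the self-duality of its Buchsbaum--Eisenbud resolution makes the Koszul homology of a minimal generating set a Poincar\'e duality algebra whose intermediate components are maximal Cohen--Macaulay over $S/J$ (see \cite{HSV1}). Strong Cohen--Macaulayness together with $G_\infty$ over the Cohen--Macaulay ring $S$ then brings the Herzog--Simis--Vasconcelos/Huneke criterion \cite{HSV1,Vasconcelos} to bear: the approximation complex of $J$ is acyclic, so $J$ is of linear type, and as a byproduct $\rr(J)$ is Cohen--Macaulay (which will be useful later). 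I expect the step requiring the most care to be the $G_\infty$ verification: one must handle the Pfaffian-versus-minor radical identities for alternating matrices uniformly across all sizes $2\le t\le d$ and make sure the height bounds survive the transit between $S$ and $R=S/(f)$. The strong Cohen--Macaulayness of grade-three Gorenstein ideals is the one structural input imported from outside; everything else is height bookkeeping plus an appeal to standard blowup-algebra machinery.
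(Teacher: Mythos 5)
Your proposal is correct and follows essentially the same route as the paper: existence by lifting the height bound on $\pf_d(\psi)$ from $R$ back to $S$ and invoking the Buchsbaum--Eisenbud structure theorem, and linear type by verifying $G_{d+1}=G_\infty$ (exploiting $\mu(J)=d+1=\dim S$) and appealing to the Herzog--Simis--Vasconcelos criterion. The only differences are cosmetic: the paper checks $G_\infty$ by directly lifting the Fitting-ideal height bounds from $I$ to $J$ rather than detouring through the Pfaffian--minor radical identities for alternating matrices, and it leaves implicit in its citation of \cite{HSV1} the strong Cohen--Macaulayness of grade-three Gorenstein ideals that you spell out.
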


\begin{proof}
To show that $\psi$ is the presentation matrix of a perfect Gorenstein ideal with grade 3, it suffices to show that $\hgt \pf_d(\psi) \geq 3$ by \cite[2.1]{BE}. Notice that the image of this ideal in $R$ is exactly the corresponding ideal of Pfaffians of $\varphi$. As the height can only decrease by passing to $R$, we have $\hgt \pf_d(\psi) \geq \hgt \overline{\pf_d(\psi)} = \hgt \pf_d(\varphi)=3$, as $I$ is perfect and Gorenstein of grade 3, using \cite[2.1]{BE}. Thus the first claim follows and such an ideal $J$ exists.

To show that $J$ is of linear type, it suffices to show that $J$ satisfies $G_{\infty}$ by \cite[2.6]{HSV1}. However, as $\mu(J) = d+1 =\dim S$, it is enough to show that $J$ satisfies $G_{d+1}$. Recall from \Cref{prelims} that this condition can be interpreted in terms of heights of Fitting ideals. Repeating the previous argument, notice that the images of the Fitting ideals of $J$ in $R$ are the corresponding Fitting ideals of $I$. Moreover, the heights of these ideals can only decrease when passing to $R$. Hence $\hgt \fitt_i(J) \geq \hgt \fitt_i(I) \geq i+1$ for all $1\leq i\leq d-1$, since $I$ satisfies $G_d$. With this, it follows that $J$ satisfies $G_d$ as well. Thus we need only show that the $d^{\text{th}}$ Fitting ideal of $J$ has height at least, and hence equal to, $d+1$ to conclude that $J$ satisfies $G_{d+1}$. However, this ideal is $\fitt_d(J) = I_1(\psi) = (x_1,\ldots,x_{d+1})$, which of course has maximal height.
\end{proof}

As $J$ is of linear type, notice that the $S[T_1,\ldots,T_{d+1}]$-ideal $(\ell_1,\ldots,\ell_{d+1})$ is precisely the ideal defining $\sym(J) \cong \rr(J)$. Moreover, it follows that $\overline{\LL}$ is the defining ideal of $\sym(I)$, as $\varphi = \overline{\psi}$. With this, we see that $S[T_1,\ldots,T_{d+1}]/ \LL \cong R[T_1,\ldots,T_{d+1}]/\overline{\LL} \cong \sym(I)$, hence $\LL$ is the  ideal defining $\sym(I)$, as a quotient of $S[T_1,\ldots,T_{d+1}]$. Hence there is a clear relation between the $S[T_1,\ldots,T_{d+1}]$-ideals defining $\sym(J)$ and $\sym(I)$, as these ideals differ only by the generator $f$. Naturally one could ask if there is a similar connection between the ideals defining $\rr(J)$ and $\rr(I)$. Before we answer this, we provide an alternative description of the defining ideal $\J$ of $\rr(I)$ and then introduce an ideal defining $\rr(I)$ as a quotient of $S[T_1,\ldots,T_{d+1}]$.

\begin{proposition}\label{Jasat}
With the assumptions of \Cref{setting1} and $\LL$ as in \Cref{notation1}, the defining ideal of $\rr(I)$ satisfies $\J = \overline{\LL:(x_1,\ldots,x_{d+1})^\infty}$.
\end{proposition}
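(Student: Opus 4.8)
The plan is to combine the classical description of the defining ideal of a Rees algebra as a saturation of the defining ideal of its symmetric algebra with an analysis of the locus where the canonical surjection $\sym(I)\twoheadrightarrow\rr(I)$ fails to be injective. Write $\m=(x_1,\ldots,x_{d+1})$, so that $\overline{\m}=I_1(\varphi)$ is the homogeneous maximal ideal of $R$; here $R$ is a Cohen--Macaulay ring of dimension $d\geq\grd I=3$. As observed in the discussion preceding the statement, the image $\overline{\LL}$ of $\LL$ in $R[T_1,\ldots,T_{d+1}]$ is the defining ideal of $\sym(I)$, so that $\Q=\J/\overline{\LL}$ is the kernel of $\sym(I)\twoheadrightarrow\rr(I)$, a finitely generated module over the Noetherian ring $R[T_1,\ldots,T_{d+1}]$. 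Since $f\in\LL$ we have $\LL+(f)=\LL$, and a direct check then gives $\overline{\LL:\m^\infty}=\overline{\LL}:\overline{\m}^{\infty}$; it therefore suffices to prove that $\J=\overline{\LL}:\overline{\m}^{\infty}$ in $R[T_1,\ldots,T_{d+1}]$.

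The inclusion $\overline{\LL}:\overline{\m}^{\infty}\subseteq\J$ is the easy direction. Since $R$ is Cohen--Macaulay of positive dimension, $\overline{\m}$ contains a nonzerodivisor $a$ of $R$, which is then also a nonzerodivisor on $\rr(I)$ because $\rr(I)$ is a submodule of the free $R$-module $R[t]$. Hence if $\overline{\m}^Nb\subseteq\overline{\LL}\subseteq\J$ for some $N$, then $a^N\Psi(b)=0$ in $\rr(I)$, forcing $\Psi(b)=0$ and so $b\in\J$. The reverse inclusion is equivalent to the statement that $\Q$ is annihilated by a power of $\overline{\m}$, and since $\Q$ is finitely generated it is enough to prove $\operatorname{Supp}_{R[T_1,\ldots,T_{d+1}]}\Q\subseteq V\big(\overline{\m}\,R[T_1,\ldots,T_{d+1}]\big)$. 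As $\overline{\m}$ is a maximal ideal of $R$ and the formation of $\sym$ and $\rr$ commutes with localization at primes of $R$, this support condition reduces to the following local statement: for every prime $\q$ of $R$ with $\q\neq\overline{\m}$, the ideal $I_\q$ is of linear type over $R_\q$.

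This local claim is the heart of the matter. We may assume $I\subseteq\q$, since otherwise $I_\q=R_\q$. I would show that $I_\q$ satisfies $G_\infty$ over the Cohen--Macaulay local ring $R_\q$; granting this, $I_\q$ is a perfect Gorenstein ideal of grade $3$ satisfying $G_\infty$, hence of linear type by \cite[2.6]{HSV1}, exactly as in the proof of \Cref{Jlineartype} --- which is the desired conclusion. To verify that $I_\q$ satisfies $G_\infty$, let $\p$ be any prime of $R$ with $I\subseteq\p\subseteq\q$; one must check $\mu(I_\p)\leq\dim R_\p$. If $\dim R_\p\leq d-1$, this is precisely the hypothesis $G_d$. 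If $\dim R_\p=d$, then $\p\neq\overline{\m}$ (as $\p\subseteq\q\neq\overline{\m}$ and $\overline{\m}$ is maximal), so $I_1(\varphi)_\p=\overline{\m}R_\p=R_\p$; thus the $(d+1)\times(d+1)$ alternating matrix $\varphi_\p$ has a unit entry, and cancelling it in the presentation $R_\p^{d+1}\xrightarrow{\varphi_\p}R_\p^{d+1}\to I_\p\to 0$ yields $\mu(I_\p)\leq d=\dim R_\p$. In either case $\mu(I_\p)\leq\dim R_\p$, so $I_\q$ satisfies $G_\infty$, and the proof is complete.

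The main obstacle is precisely this last local step, whose content is that the failure of $I$ to be of linear type is concentrated entirely at the homogeneous maximal ideal $\overline{\m}$. The mechanism is that away from $\overline{\m}$ the equality $I_1(\varphi)=\overline{\m}$ forces a unit into the presentation matrix --- which is exactly where the hypotheses $I_1(\varphi)=\overline{(x_1,\ldots,x_{d+1})}$ and $\mu(I)=d+1$ are genuinely used --- so that $G_d$ improves to $G_\infty$ at every prime of $R$ other than $\overline{\m}$; the passage from $G_\infty$ to linear type then rests on the structure theory of grade-three Gorenstein ideals rather than on anything peculiar to the hypersurface ring $R$.
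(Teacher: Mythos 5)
Your proof is correct and follows essentially the same route as the paper: localize away from the maximal homogeneous ideal, upgrade $G_d$ to $G_\infty$ there to get linear type via \cite[2.6]{HSV1}, and handle the easy containment by noting $\overline{(x_1,\ldots,x_{d+1})}$ has positive grade on $\rr(I)$. The only (harmless) differences are that you treat all primes rather than just homogeneous ones --- hence your extra case $\dim R_\p=d$ using $I_1(\varphi)=\overline{(x_1,\ldots,x_{d+1})}$ --- and that you justify the grade statement directly via $\rr(I)\subseteq R[t]$ instead of citing the correspondence of associated primes.
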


\begin{proof}
As $I$ satisfies the condition $G_d$, for any non-maximal homogeneous prime $R$-ideal $\p$, $I_\p$ satisfies $G_\infty$ as an $R_\p$-ideal and is hence of linear type by \cite[2.6]{HSV1}. Thus $\J_\p = \overline{\LL}_\p$ for any such prime ideal $\p$ and so the quotient $\Q = \J/\overline{\LL}$ is supported only at the homogeneous maximal $R$-ideal $\overline{(x_1,\ldots,x_{d+1})}$. Hence $\Q$ is annihilated by some power of $\overline{(x_1,\ldots,x_{d+1})}$, which shows that $\J \subseteq \overline{\LL}:\overline{(x_1,\ldots,x_{d+1})}^\infty$. However, we have the containment $\overline{\LL}:\overline{(x_1,\ldots,x_{d+1})}^\infty \subseteq \J$ as $\overline{\LL} \subseteq \J$ and modulo $\J$, the image of $\overline{(x_1,\ldots,x_{d+1})}$ in $\rr(I)$ is an ideal of positive grade.
\end{proof}

The statement regarding the grade of $\overline{(x_1,\ldots,x_{d+1})}\rr(I)$ in the proof above follows from the well-known correspondence between the associated primes of $R$ and $\rr(I)$ \cite{HS}\cite[1.5]{EHU}.

Notice that $\Q=\J/\overline{\LL}$, as in the proof of \Cref{Jasat}, is the kernel of the natural bihomogeneous map $\sym(I(\delta))\rightarrow \rr(I)$, where $\delta = \frac{d}{2}$ is the degree of the generators of $I$ (recall that $d$ is even). Writing $\m = (x_1,\ldots,x_{d+1})$ for the homogeneous maximal $S$-ideal, the description of $\J$ in \Cref{Jasat} shows that $\Q\cong H_{\overline{\m}}^0\big(\sym(I(\delta))\big)$, the zeroth local cohomology module of $\sym(I(\delta))$ with support in $\overline{\m}$. Thus $\Q$ is concentrated in only finitely many degrees and so we may use the tools developed in \cite{KPU3} to bound these degrees.

\begin{proposition}\label{IndexOfSat}
With the assumptions of \Cref{setting1}, $\J=\overline{\LL:(x_1,\ldots,x_{d+1})^m}$.
\end{proposition}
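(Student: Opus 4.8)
The plan is to refine the bound $\J = \overline{\LL : \m^\infty}$ from \Cref{Jasat} to the explicit exponent $m$ by analyzing where the module $\Q \cong H^0_{\overline{\m}}\big(\sym(I(\delta))\big)$ lives in internal degrees (the $T$-grading) and then translating a degree bound into a bound on the saturation exponent. First I would recall from the discussion following \Cref{Jasat} that $\Q$ is a finitely generated graded module annihilated by a power of $\overline{\m}$, so it is concentrated in finitely many internal degrees; say $\Q_j = 0$ for $j > j_0$. The key observation is that once we know $\Q$ vanishes in internal degrees above $j_0$, a standard local-cohomology/regularity argument bounds the saturation exponent: if every element of $\LL : \overline{\m}^\infty$ of internal degree $j$ already lies in $\overline{\LL} : \overline{\m}^{\,c}$ for a uniform $c$, we can take $c = m$. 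So the crux reduces to showing $j_0 \le$ something that forces the exponent $m$.

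The technical heart, as signposted in the paragraph before the statement, is to invoke the machinery of \cite{KPU3} on bounding the degrees in which $H^0_{\overline{\m}}$ of a symmetric algebra is nonzero. Concretely, I would set up $\sym(I(\delta))$ as a quotient of the polynomial ring $\overline{S}[T_1,\ldots,T_{d+1}]$ by the linear forms $\overline{\ell_1},\ldots,\overline{\ell_{d+1}}$, which form (the images of) a sequence governed by the Jacobian dual. Because $J \subset S$ is of linear type (\Cref{Jlineartype}), $\sym(I)$ is obtained from the linear-type algebra $\sym(J) \cong \rr(J)$ by reduction modulo $f$, a single homogeneous element of degree $m$. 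The point is that $\rr(J) = S[T]/(\ell_1,\ldots,\ell_{d+1})$ is a nice (Cohen–Macaulay, even) ring, and $f$ is a nonzerodivisor on it — or becomes one after controlling the relevant associated primes — so that $\sym(I(\delta)) = \rr(J)/f\,\rr(J)$ and the local cohomology $H^0_{\overline{\m}}$ of the quotient is computed from the connecting map in the long exact sequence of $0 \to \rr(J)(-m) \xrightarrow{f} \rr(J) \to \rr(J)/f \to 0$. Since $\rr(J)$ has depth large enough that $H^0_{\overline{\m}}(\rr(J)) = H^1_{\overline{\m}}(\rr(J)) = 0$, one gets $H^0_{\overline{\m}}(\sym(I(\delta))) \cong H^1_{\overline{\m}}(\rr(J))(-m) \big/ f$-torsion, but the cleaner route is to bound its internal degrees directly: $H^0_{\overline{\m}}(\rr(J)/f)$ is supported in the degrees where multiplication by $f$ fails to be injective on $H^1_{\overline{\m}}(\rr(J))$, and the shift by $m$ from the degree of $f$ is exactly what produces the exponent $m$ in the saturation.

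So the key steps, in order, are: (1) identify $\sym(I(\delta))$ with $\rr(J)/f\,\rr(J)$ using \Cref{Jlineartype} and \Cref{notation1}; (2) establish the depth/vanishing properties of $\rr(J)$ (it is a linear-type Rees algebra of a perfect Gorenstein ideal in a polynomial ring, so one can cite the relevant Cohen–Macaulayness — e.g. via \cite{Morey} or the structure of $\sym(J)$) so that the low local cohomology modules vanish; (3) run the long exact sequence in $H^\bullet_{\overline{\m}}$ for the multiplication-by-$f$ sequence to realize $\Q$ as a subquotient of $H^1_{\overline{\m}}(\rr(J))(-m)$ killed by $f$, and read off that $\Q$ is concentrated in a range that gives $\overline{\m}^m \cdot \Q = 0$; (4) conclude $\J = \overline{\LL} : \overline{\m}^{\,m} = \overline{\LL : \m^m}$. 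I expect the main obstacle to be step (2)–(3): pinning down precisely which local cohomology modules of $\rr(J)$ vanish and in which internal degrees, and verifying that $f$ is genuinely a nonzerodivisor on $\rr(J)$ (equivalently that $\rr(J)$ has no associated prime containing $f$), which is where the hypotheses $G_d$ and $\mu(I) = d+1$ — and the tools of \cite{KPU3} for bounding $H^0_{\overline{\m}}$ of symmetric algebras — will have to be brought in carefully rather than invoked as a black box.
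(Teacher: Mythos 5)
Your reduction is the right one: by \Cref{Jasat} it suffices to show $\overline{\m}^m\Q=0$ for $\Q\cong H^0_{\overline\m}\big(\sym(I(\delta))\big)$, i.e.\ that $\Q$ vanishes in $x$-degrees $\geq m$, and your identification $\sym(I(\delta))\cong \rr(J)/f\rr(J)$ with $f$ a nonzerodivisor is correct ($\rr(J)$ is a domain). But the route you sketch from there has a genuine gap. First, the claimed vanishing $H^1_{\overline\m}(\rr(J))=0$ is false: $\m\rr(J)$ is a height-one prime in the Cohen--Macaulay ring $\rr(J)$, so $\grd(\m\rr(J))=1$ and $H^1_{\m}(\rr(J))\neq 0$; worse, if it did vanish the long exact sequence would force $\Q=0$, contradicting that $I$ is not of linear type ($\mu(I)=d+1>\dim R$). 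What the long exact sequence actually yields is $\Q\cong\ker\big(f\colon H^1_\m(\rr(J))(-m,0)\to H^1_\m(\rr(J))\big)$, so to get $\Q_{(p,*)}=0$ for $p\geq m$ you need $H^1_\m(\rr(J))_{(j,*)}=0$ for all $j\geq 0$. That degree bound is essentially equivalent to the proposition itself, and nothing in your outline establishes it; ``the shift by $m$ from the degree of $f$'' only relocates the problem.

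The paper takes a different and more concrete path: it stays over $R$ and applies the degree bound \cite[3.8]{KPU3} for $H^0_{\overline\m}(\sym_q(I(\delta)))$ directly, using the Kustin--Ulrich complex $\DD^q_\bullet(\varphi)$ from \cite{KU} as the approximating complex. The exponent $m$ enters through the $a$-invariant of the hypersurface ring, $a(R)=\reg R-d=(m-1)-d$, and the remaining work is an explicit computation (from the linearity of $\varphi$ and the description of $\DD^q_d$) that the maximal generator degree satisfies $b_0(\DD^q_d)\leq d$, giving $\Q_{(p,q)}=0$ for $p>m-1$. If you want to salvage your approach, you would need to independently prove the graded vanishing $H^1_{\m}(\rr(J))_{(j,*)}=0$ for $j\geq 0$ (e.g.\ from an explicit bigraded resolution of $\rr(J)\cong\sym(J)$ such as the one in \cite{Kustin}); as written, that step is missing.
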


\begin{proof}
It is clear that $\overline{\LL:(x_1,\ldots,x_{d+1})^m} \subseteq \J$, following \Cref{Jasat}, hence we need only show the reverse containment. In order to show that $\overline{\m}^m \J \subseteq \overline{\LL}$, it suffices to show that $\overline{\m}^m \Q =0$, where $\Q$ is as above. As mentioned, $\Q\cong H_{\overline{\m}}^0\big(\sym(I(\delta))\big)$ and so, with the bigrading $\deg \overline{x_i}=(1,0)$ and $\deg T_i = (0,1)$ on $R[T_1,\ldots,T_{d+1}]$, we may write
$$\Q_{(*,q)} = \bigoplus_p \Q_{(p,q)} \cong  H_{\overline{\m}}^0\big({\rm Sym}_q(I(\delta)) \big).$$
As $\Q$ lives in finitely many degrees, it is enough to show that $\Q$ vanishes past degree $m-1$ in the first component of the bigrading. 

By \cite[3.8]{KPU3}, it follows that $\Q_{(p,q)} =0$ for all $p > b_0 (\DD_d^q) +a(R)$ and any $q$, where $\DD_d^q$ is the $d^{\text{th}}$ module of a homogeneous complex $\DD_{\bullet}^q$ of finitely generated graded $R$-modules with zeroth homology $H_0(\DD_{\bullet}^q) \cong \sym_q(I(\delta))$, $b_0 (\DD_d^q)$ is the \textit{maximal generator degree} of $\DD_d^q$ from \cite[2.2]{KPU3}, and $a(R)$ is the $a$-invariant of $R$. Since $R$ is a Cohen-Macaulay $k$-algebra, the $a$-invariant of $R$ is $a(R) = \reg R -d$, where $\reg R$ denotes the Castelnuovo-Mumford regularity of $R$. As $R$ is a hypersurface ring defined by a polynomial of degree $m$, it follows that $a(R) = \reg R -d = (m-1)-d$, hence we need only show that $b_0 (\DD_d^q) \leq d$ for any $q$.

Since $\varphi$ is a $(d+1)\times (d+1)$ homogeneous alternating matrix which presents $I$ minimally, we may take
$$\DD_{\bullet}^q (\varphi): \, 0 \longrightarrow \DD_d^q \longrightarrow \DD_{d-1}^q \longrightarrow \cdots\cdots \longrightarrow \DD_1^q \longrightarrow \DD_0^q\longrightarrow 0$$
to be the complex from \cite[2.15, 4.7]{KU} associated to $\varphi$. The zeroth homology of $\DD_{\bullet}^q (\varphi)$ is $H_0\big(  \DD_{\bullet}^q (\varphi) \big) \cong \sym_q(I(\delta))$, hence we may consider this complex and the maximal generator degree of $\DD_d^q$. Following the description and notation of this complex given in \cite{KU} and restated in \cite{KPU2}, and noting that the entries of $\varphi$ are linear, for all $1\leq r\leq d$ we have
\[
\DD_r^q = \left\{
     \begin{array}{ll}
       K_{q-r,r} = R(-r)^{\beta_r^q} & \text{if $r \leq \min\{q,d\}$}\\[1ex]
       Q_q = R(-(r-1)-\frac{1}{2}(d-r+2)) & \text{if $r =q+1\leq d$, $q$ odd}\\[1ex]
       0& \text{if $r =q+1$, $q$ even}\\[1ex]
       0& \text{if $r \geq \min\{q+2,d+1\}$}\\[1ex]
     \end{array}
   \right.
\]
for some nonzero Betti numbers $\beta_r^q$. Allowing $r=d$, which is even, the expressions above simplify to 
\[
\DD_d^q = \left\{
     \begin{array}{ll}
       K_{q-d,d} = R(-d)^{\beta_r^q} & \text{if $q\geq d$}\\[1ex]
       Q_q = R(-d) & \text{if $q =d-1$}\\[1ex]
       0& \text{if $q\leq d-2$}.\\[1ex]
     \end{array}
   \right.
\]
If $\DD_d^q=0$, then $b_0(\DD_d^q) = -\infty$ by convention Moreover, if $\DD_d^q\neq 0$, we see that $b_0(\DD_d^q) =d$. Thus $b_0 (\DD_d^q) \leq d$ for any $q$, as required.
\end{proof}

\begin{notation}\label{Anotation}
With the result of \Cref{IndexOfSat}, let us denote the ideal $\A = \LL:(x_1,\ldots,x_{d+1})^m = \LL:(x_1,\ldots,x_{d+1})^\infty$ in $S[T_1,\ldots,T_{d+1}]$.
\end{notation}

Notice that $\overline{\A} = \J$, hence $\A$ defines $\rr(I)$ as a quotient of $S[T_1,\ldots,T_{d+1}]$ since $S[T_1,\ldots,T_{d+1}]/\A \cong R[T_1,\ldots,T_{d+1}]/\J \cong \rr(I)$. For much of the duration of this paper, the ideal $\A$ will be the object of our focus. This ideal is a defining ideal of $\rr(I)$, in a sense, and belongs to the polynomial ring $S[T_1,\ldots,T_{d+1}]$, where a colon ideal $\A = \LL:(x_1,\ldots,x_{d+1})^m$ is more easily studied.

We follow a path parallel to the traditional one by approximating the defining ideal of $\rr(I)$ using the defining ideal of $\sym(I)$, now using the ideals $\A$ and $\LL$ in place of $\J$ and $\overline{\LL}$. Traditionally, one then employs a Jacobian dual matrix, however as we have updated our ideals, we must also update such a matrix. We recall the notion of a \textit{modified Jacobian dual} as presented in \cite{Weaver}. This matrix is associated to the generators of $\LL$ and the sequence $x_1,\ldots,x_{d+1}$. 

Before introducing this object, we provide the notation necessary for its definition and the constructions in the proceeding section. Notice that $S[T_1,\ldots,T_{d+1}]$ is naturally bigraded with $\deg x_i = (1,0)$ and $\deg T_i = (0,1)$.

\begin{notation}\label{delnotation}
 For $F \in (x_1,\ldots,x_{d+1})S[T_1,\ldots,T_{d+1}]$ a nonzero bihomogeneous polynomial, let $\partial F$ denote a column consisting of bihomogeneous entries with bidegree $\deg F -(1,0)$, such that $[x_1,\ldots,x_{d+1}]\cdot \partial F = F$. As a convention, we take $\partial F$ to consist of zeros if $F=0$.
\end{notation}

In general, there are many choices for $\partial F$. As the notation suggests, there is a natural choice for $\partial F$ using differentials, if $k$ is a field of characteristic zero. Writing $\deg F = (r,*)$ for $r>0$, we have the Euler formula $r\cdot F \,= \,\sum_{i=1}^{d+1}\,\frac{\partial F}{\partial x_i} \cdot x_i$, noting that $r$ is a unit. Hence $\partial F$ can be chosen to have entries $\frac{1}{r}\cdot\frac{\partial F}{\partial x_i}$ in this setting.

\begin{definition}\label{mjddefn}
With $\LL$ and $\psi$ as in \Cref{notation1}, we take a \textit{modified Jacobian dual} of $\psi$ to be the $(d+1) \times (d+2)$ matrix $B=[B(\psi)\,|\,\partial f]$ where $B(\psi)$ is the Jacobian dual of $\psi$, consisting of linear entries in $k[T_1,\ldots,T_{d+1}]$, and $\partial f$ is a column corresponding to $f$, as in \Cref{delnotation}. Here $|$ denotes the usual matrix concatenation. 
\end{definition}

Notice that the entries of the matrix product $[x_1\ldots x_{d+1}]\cdot B$ are precisely the generators of $\LL$. In the next section we will employ the modified Jacobian dual and similar constructions to produce equations in $\A$. For now however, we must produce another description of $\A$.

Following the approach in \cite{KPU1}, we find a ring which maps onto $\rr(I)$ such that the kernel is an ideal of height one. We take this ring to be the Rees algebra $\rr(J)$, noting that $J$ is of linear type by \Cref{Jlineartype}. We now study how these Rees algebras, and their defining ideals, relate to each other.

\subsection{Ideals in $\rr(J)$}

Before we study the relation between $\rr(J)$ and $\rr(I)$, we introduce a third, and final, perfect Gorenstein ideal of grade three. It will be seen that this ideal satisfies the assumptions of \Cref{Moreyresult}. The defining ideal of its Rees algebra will then be used to produce a description of the image of $\A$ in $\rr(J)$. We begin by providing a short lemma, commonly used to avoid certain ideals in graded rings.

\begin{lemma}\label{idealavoidance}
Let $A=k[y_1,\ldots,y_n]$ for $k$ an infinite field and let $J$ be an ideal generated by homogeneous elements of degree $r$. Suppose that $I_1,\ldots,I_s$ are ideals of $A$, none of which contains $J$. There exists a homogeneous element $z\in J$ of degree $r$ such that $z\notin I_j$ for all $1\leq j\leq s$.
\end{lemma}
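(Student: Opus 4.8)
The statement is a graded prime-avoidance argument, so the plan is to reduce to the case where each $I_j$ is prime and then build $z$ by a dimension-count / Zariski-density argument using that $k$ is infinite. First I would note that we may assume each $I_j$ is a prime ideal: replacing $I_j$ by a minimal prime over it only enlarges the ideal, and if $J\not\subseteq I_j$ then $J$ is not contained in any prime containing $I_j$ either, since such a prime contains $I_j$; wait—that is the wrong direction. More carefully: if $J\subseteq\mathfrak p$ for some prime $\mathfrak p\supseteq I_j$, that does not force $J\subseteq I_j$. So instead I would argue directly without this reduction, or reduce only to radical ideals. Let me take the direct route.

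\textbf{Key steps.} Let $V_j\subseteq \operatorname{Spec} A$ (or better, work in the affine space $\aa^n_{\bar k}$ after base change to the algebraic closure, or simply with the $k$-points) be the vanishing locus of $I_j$. Consider the $k$-vector space $J_r$ of degree-$r$ elements of $J$; by hypothesis $J$ is generated in degree $r$, and $J_r\neq 0$ (since $J\not\subseteq I_j$ forces $J\neq 0$). For each $j$, the condition $z\in I_j$ for $z\in J_r$ is a proper $k$-subspace $W_j := J_r\cap I_j$ of $J_r$: it is proper precisely because $J\not\subseteq I_j$ means some homogeneous generator of $J$, which has degree $r$ and hence lies in $J_r$, is not in $I_j$. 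Thus each $W_j$ is a proper linear subspace of the finite-dimensional $k$-vector space $J_r$. Since $k$ is infinite, a finite union of proper subspaces $\bigcup_{j=1}^s W_j$ cannot equal $J_r$ (a standard fact: a vector space over an infinite field is not a finite union of proper subspaces). Pick $z\in J_r\setminus\bigcup_{j=1}^s W_j$; then $z$ is a homogeneous element of $J$ of degree $r$ with $z\notin I_j$ for all $j$, as required.

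\textbf{Main obstacle.} The only subtle point—and the one I would state carefully—is why $W_j = J_r\cap I_j$ is a \emph{proper} subspace of $J_r$, i.e. why $J\not\subseteq I_j$ implies $J_r\not\subseteq I_j$. This is exactly where the hypothesis that $J$ is generated by homogeneous elements of degree $r$ is used: $J\subseteq I_j$ would follow from $J_r\subseteq I_j$ because the generators lie in $J_r$, so contrapositively $J\not\subseteq I_j$ gives $J_r\not\subseteq I_j$, hence $W_j\subsetneq J_r$. After that, the proof is just the infinite-field fact that a vector space is not a finite union of proper subspaces, which I would cite or prove in one line (e.g. induct on $s$, or use that the complement of finitely many hyperplanes in $\aa^N_k$ is nonempty for $k$ infinite). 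I do not expect any real difficulty here; the lemma is standard and the write-up is short.
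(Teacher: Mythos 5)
Your proof is correct and follows essentially the same route as the paper's (which is only a two-line sketch): identify the degree-$r$ elements of $J$ lying in each $I_j$ as a proper $k$-subspace of $J_r$ and invoke the fact that a vector space over an infinite field is not a finite union of proper subspaces. Your careful justification that $J_r\cap I_j$ is proper, using that the generators of $J$ live in degree $r$, is exactly the point the paper leaves implicit.
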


\begin{proof}
This follows from the well-known fact that a vector space over an infinite field is not a finite union of proper subspaces, and then applying Nakayama's lemma in the graded setting.
\end{proof}

\begin{proposition}\label{J'ideal}
With the assumptions of \Cref{setting1}, let $S' =k[x_1,\ldots,x_d] \cong S/(x_{d+1})$ and consider the matrix $\psi' = \psi S'$. After a possible linear change of coordinates, there exists an $S'$-ideal $J'$ that is perfect and Gorenstein of grade $3$, which is presented by $\psi'$. Moreover, $J'$ satisfies $G_d$.
\end{proposition}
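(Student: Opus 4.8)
The plan is to mimic the strategy already used in the proof of \Cref{Jlineartype}, passing from $S$ to the quotient $S' \cong S/(x_{d+1})$ and controlling how heights of the relevant ideals of Pfaffians and Fitting ideals behave. First I would deal with the "linear change of coordinates" issue: we need $x_{d+1}$ (or rather, after a change of variables, some linear form) to be sufficiently generic with respect to $J$ so that reducing modulo it does not destroy the grade-three perfectness. Concretely, by \Cref{Jlineartype} the ideal $J\subset S$ is perfect Gorenstein of grade $3$, presented by $\psi$, with $\pf_d(\psi)$ the corresponding ideal of submaximal Pfaffians; I would use \Cref{idealavoidance} (applied to a suitable collection of associated/minimal primes) to choose, after a linear change of coordinates, the variable $x_{d+1}$ to be a nonzerodivisor on $S/\pf_d(\psi)$ and, more generally, generic enough that $\hgt\big(\pf_d(\psi)+(x_{d+1})\big)/(x_{d+1}) = 3$ in $S'$ and that the analogous statements hold for the Fitting ideals $\fitt_i(J)$ for $1\le i\le d-1$. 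Since $\psi'=\psi S'$ still consists of linear entries, once $\hgt\pf_d(\psi')\geq 3$ we may invoke the Buchsbaum--Eisenbud structure theorem \cite[2.1]{BE} exactly as in \Cref{Jlineartype} to conclude that $\psi'$ presents a perfect Gorenstein $S'$-ideal $J'$ of grade $3$.

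Next I would verify the $G_d$ condition for $J'$. As in the proof of \Cref{Jlineartype}, this is most cleanly phrased via Fitting ideals: $J'$ satisfies $G_d$ if and only if $\hgt\fitt_i(J')\geq i+1$ for all $1\le i\le d-1$, where $\fitt_i(J') = I_{(d+1)-i}(\psi')$. Now $\fitt_i(J') = \overline{\fitt_i(J)}$ (images modulo $x_{d+1}$, abusing the overline notation), and we already know from \Cref{Jlineartype} that $J$ satisfies $G_{d+1}$, so $\hgt\fitt_i(J)\geq i+1$ for $1\le i\le d-1$ — in fact the final Fitting ideal $\fitt_d(J)=I_1(\psi)=(x_1,\ldots,x_{d+1})$ has height $d+1$. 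The point is that cutting by the single generic linear form $x_{d+1}$ drops each of these heights by at most one, and for the genericity achieved in the first step it drops by exactly one where needed; so $\hgt\fitt_i(J')\geq i$ in general, but because $\fitt_i(J)$ has height at least $i+1$ and $x_{d+1}$ is generic (not contained in any minimal prime of $\fitt_i(J)$ of height $i+1$ — guaranteed again by \Cref{idealavoidance}), we actually get $\hgt\fitt_i(J')\geq i+1$ for $1\le i\le d-1$. Hence $J'$ satisfies $G_d$.

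The main obstacle I anticipate is the bookkeeping around the linear change of coordinates: one must simultaneously arrange genericity of the cutting hyperplane with respect to finitely many ideals — $\pf_d(\psi)$ and the Fitting ideals $\fitt_i(J)$ for $1\le i\le d-1$ — and confirm that each of these ideals fails to be contained in the relevant "bad" primes, so that \Cref{idealavoidance} applies. One subtlety is that after the change of coordinates the entries of $\psi$ are still linear (true, since linear change of variables preserves the linear-entry property) and $I_1(\psi')$ is still generated by $d$ of the variables, so the structural hypotheses carry over unproblematically; the only real work is checking that the height bounds survive the specialization, which is a standard but slightly delicate prime-avoidance argument. I would also remark that since $d$ is even (by \cite[2.2]{BE}) there is no parity obstruction to $\psi'$ being the presentation matrix of a grade-three Gorenstein ideal in the $d$-dimensional ring $S'$.
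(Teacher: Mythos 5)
Your proposal is correct and takes essentially the same approach as the paper: reduce both claims to height bounds on $\pf_d(\psi')$ and the determinantal (Fitting) ideals of $\psi'$, use \Cref{idealavoidance} to choose the linear form $x_{d+1}$ avoiding the finitely many non-maximal minimal primes of $\pf_d(\psi)$ and of the $I_j(\psi)$, and observe that the only heights that drop are the maximal ones, which drop from $d+1$ to $d$ and still satisfy the required bound. The paper makes explicit the small point you should too, namely that $d\geq 4$ forces $3=\hgt\pf_d(\psi)<\dim S$, so the primes being avoided are indeed proper and do not contain $(x_1,\ldots,x_{d+1})$.
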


\begin{proof}
Notice that $\psi'$ is an alternating $(d+1)\times (d+1)$ matrix with entries in $S'$. By \cite[2.1]{BE}, the existence of such an ideal depends only on the height of an ideal of Pfaffians of $\psi' = \psi S'$. Moreover, the condition $G_d$  depends on the heights of ideals of minors of $\psi'$. Thus it suffices to show that, after making a suitable change of coordinates, $\hgt \pf_d(\psi')\geq 3$ and $\hgt I_j(\psi') \geq d-j+2 $ for all $2\leq j \leq d$. Notice that $\pf_d(\psi') = \pf_d(\psi)S'$ and $I_j(\psi') = I_j(\psi)S'$. Recall that $J$ is presented by $\psi$ and is of linear type by \Cref{Jlineartype}, and hence satisfies $G_\infty$.

Recall that $I$ is an ideal of height 3 in $R$, which is $d$-dimensional. As $d$ is even, it follows that $d\geq 4$, hence $\dim S = d+1 \geq 5$ and so $\hgt \pf_d(\psi)=3 <\dim S$. Now consider the determinantal ideals $I_j(\psi)$ with height at most $d$, for $2\leq j\leq d$. There are finitely many non-maximal minimal primes of $\pf_d(\psi)$ and the ideals $I_j(\psi)$ with non-maximal height, and none of them contains $(x_1,\ldots,x_{d+1})$. Hence there exists a linear form not contained in any of these minimal primes by \Cref{idealavoidance}. After a potential linear change of coordinates, it can be assumed that $x_{d+1}$ is precisely this linear form.

With this, we see that $\hgt \pf_d(\psi') = \pf_d(\psi) =3$ and $\hgt I_j(\psi') = \hgt I_j(\psi)\geq d-j+2$ for all $j$ such that $\hgt I_j(\psi)\leq d$ and $2\leq j\leq d$. For any of the ideals $I_j(\psi)$ with maximal height and $j$ in this range, the height of $I_j(\psi)$ must drop when passing to $S'$. However, if $\hgt I_j(\psi) = d+1$, then $\hgt I_j(\psi') = \hgt I_j(\psi) -1 =d \geq d-j+2$ as $2\leq j\leq d$. 
\end{proof}

\begin{remark}
In \Cref{J'ideal}, a linear change of coordinates was made and we note that the conditions and constructions introduced so far are amenable to such a change. We proceed assuming that such a linear adjustment has been made and the sequence $x_1,\ldots,x_{d+1}$ has been relabelled accordingly.  
\end{remark}

 Notice that the $S'$-ideal $J'$ satisfies the assumptions of \Cref{Moreyresult}, hence the defining equations of $\rr(J')$ are known. In particular, they can be described from the Jacobian dual $B(\psi')$, which is precisely a submatrix of $B(\psi)$. Indeed, the entries of $B(\psi)$ belong to $k[T_1,\ldots,T_{d+1}]$ and the last row of $B(\psi)$ corresponds to $x_{d+1}$. Thus by deleting the last row of $B(\psi)$ we obtain the Jacobian dual of $\psi'$, with respect to $x_1,\ldots,x_d$ in $S'$. Letting $B'$ denote this submatrix of $B(\psi)$ obtained by deleting the last row, we note that there is a nontrivial greatest common divisor among the maximal minors of $B'=B(\psi')$, by \Cref{JDminors}.

\begin{notation}\label{notation2}
Recall that $J$ is of linear type by \Cref{Jlineartype}, hence $\rr(J) \cong S[T_1,\ldots,T_{d+1}]/ \H$ where $\H =(\ell_1,\ldots,\ell_{d+1})$, following \Cref{notation1}. Let $\widetilde{\,\cdot\,}$ denote images modulo $\H$, in $\rr(J)$. As before, let $B'$ be the $d\times (d+1)$ matrix obtained by deleting the last row of $B(\psi)$ and consider the $S[T_1,\ldots,T_{d+1}]$-ideal $\K= (\ell_1,\ldots,\ell_{d+1}) + (\gcd I_d(B'))+(x_{d+1})$.
\end{notation}

As mentioned, we may identify $B'$ with the Jacobian dual $B(\psi')$, for $\psi'$ as in \Cref{J'ideal}. Hence there is a greatest common divisor amongst the maximal minors of $B'$, in $k[T_1,\ldots,T_{d+1}]$.

\begin{proposition}\label{PropertiesOfA}
The ring $\rr(J)$ is a Cohen-Macaulay domain of dimension $d+2$ and the ideals $\widetilde{\K}$ and $(\widetilde{x_1,\ldots,x_{d+1}})$ are Cohen-Macaulay $\rr(J)$-ideals of height 1. Moreover, $(\widetilde{x_1,\ldots,x_{d+1}})$ is a prime ideal.
\end{proposition}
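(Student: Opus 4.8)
The plan is to handle the four claims in sequence, pinning down the dimension and Cohen--Macaulayness of $\rr(J)$ first and then reading off the two height-one ideals from explicit descriptions of the quotients $\rr(J)/(\widetilde{x_1,\ldots,x_{d+1}})$ and $\rr(J)/\widetilde{\K}$. That $\rr(J)$ is a domain is immediate, since $S$ is a domain and $\rr(J) \subseteq S[t]$; and $\dim\rr(J) = \dim S + 1 = d+2$ by the standard dimension formula, as $J$ has positive grade. For Cohen--Macaulayness I would combine the facts that $J$ is of linear type and satisfies $G_\infty$ (by \Cref{Jlineartype}) with the known structure of Rees algebras of linearly presented grade-three Gorenstein ideals, so that $\rr(J) = \sym(J)$ is Cohen--Macaulay by \cite{HSV1,KPU2}. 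In particular $\rr(J)$ is a catenary equidimensional ring, so that heights of ideals may be computed as $\dim\rr(J)$ minus the dimension of the corresponding quotient.

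Next, writing $\m = (x_1,\ldots,x_{d+1})$ and using that $J$ is of linear type with $\mu(J) = d+1$, base change of the symmetric algebra yields
$$\rr(J)/(\widetilde{x_1,\ldots,x_{d+1}}) \;=\; \rr(J)/\m\rr(J) \;=\; \sym_S(J)\otimes_S S/\m \;\cong\; \sym_k(J/\m J) \;\cong\; k[T_1,\ldots,T_{d+1}].$$
Since the right-hand side is a polynomial ring, it is a Cohen--Macaulay domain; hence $(\widetilde{x_1,\ldots,x_{d+1}})$ is a prime ideal with Cohen--Macaulay quotient, and its height is $(d+2)-(d+1)=1$.

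For $\widetilde{\K}$, the containment $\H \subseteq \K$ gives $\rr(J)/\widetilde{\K} \cong S[T_1,\ldots,T_{d+1}]/\K$. Reducing the generators of $\K$ modulo $x_{d+1}$ replaces each $\ell_i$ by the corresponding entry $\ell_i'$ of $[T_1\ldots T_{d+1}]\cdot\psi'$ and leaves $\gcd I_d(B')$ unchanged, since $B' = B(\psi')$ has entries in $k[T_1,\ldots,T_{d+1}]$; hence
$$\rr(J)/\widetilde{\K} \;\cong\; S'[T_1,\ldots,T_{d+1}]\big/\big((\ell_1',\ldots,\ell_{d+1}') + (\gcd I_d(B'))\big).$$
By \Cref{Moreyresult} applied to the ideal $J'$ of $S' = k[x_1,\ldots,x_d]$ — legitimate by \Cref{J'ideal}, with presentation $\psi'$, Jacobian dual $B'$, and $\mu(J')=d+1$ — the right-hand side is precisely $\rr_{S'}(J')$, which is Cohen--Macaulay of dimension $\dim S' + 1 = d+1$. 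Hence $\widetilde{\K}$ is a Cohen--Macaulay ideal, and its height is $(d+2)-(d+1)=1$ (in particular positive, consistent with $\widetilde{x_{d+1}}\in\widetilde{\K}$ being a nonzerodivisor on the domain $\rr(J)$).

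The one substantive step is the Cohen--Macaulayness of $\rr(J)$ itself; granting it, the two identifications $\rr(J)/\m\rr(J) \cong k[T_1,\ldots,T_{d+1}]$ and $\rr(J)/\widetilde{\K} \cong \rr_{S'}(J')$ are routine, and the remaining ingredient, \Cref{Moreyresult}, is already in hand.
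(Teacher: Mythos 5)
Your proof is correct and follows essentially the same route as the paper: the domain and dimension claims are standard, Cohen--Macaulayness of $\rr(J)$ comes from linear type via \cite{HSV1}, the ideal $(\widetilde{x_1,\ldots,x_{d+1}})$ is handled through the special fiber ring $\ff(J)\cong k[T_1,\ldots,T_{d+1}]$, and $\widetilde{\K}$ is handled through \Cref{Moreyresult} applied to $J'$. The only cosmetic difference is that you identify the quotient $\rr(J)/\widetilde{\K}$ with $\rr(J')$ directly, whereas the paper computes the height of $\K$ in $S[T_1,\ldots,T_{d+1}]$ by noting $x_{d+1}$ is regular modulo the defining ideal of $\rr(J')$; these are equivalent.
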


\begin{proof}
The claim that $\rr(J)$ is a domain of dimension $d+2$ follows easily as $S$ is a domain of dimension $d+1$ and $J$ is an ideal of positive height \cite{VasconcelosBook}. Additionally, $J$ is of linear type by \Cref{Jlineartype}, hence $\rr(J)$ is Cohen-Macaulay by \cite[2.6]{HSV1}. Moreover, as $J$ is of linear type, its special fiber ring is $\ff(J) \cong k[T_1,\ldots, T_{d+1}]$, hence $(\widetilde{x_1,\ldots,x_{d+1}})$ is indeed a Cohen-Macaulay prime ideal of height 1.

To see that $\widetilde{\K}$ is a Cohen-Macaulay $\rr(J)$-ideal of height 1, notice that $\K$ can be written as $(\ell_1',\ldots,\ell_{d+1}') +(\gcd I_d(B'))+(x_{d+1})$, where $[\ell_1'\ldots \ell_{d+1}'] = [x_1 \ldots x_d] \cdot B'$. Notice that $(\ell_1',\ldots,\ell_{d+1}') +( \gcd I_d(B'))$ is exactly the defining ideal of $\rr(J')$ following \Cref{J'ideal} and \Cref{Moreyresult}. In particular, this ideal is Cohen-Macaulay with height $d$. As $x_{d+1}$ is regular modulo this ideal, it then follows that $\K$ has height $d+1$ and is Cohen-Macaulay. Thus $\widetilde{\K}$ is a Cohen-Macaulay ideal of height 1 in $\rr(J)$. 
\end{proof}

\begin{proposition}\label{colons}
With $\K$ as in \Cref{notation2}, for any positive integer $i$ we have the following.
\begin{enumerate}[(a)]
\setlength\itemsep{1em}
    \item $(\widetilde{x_1,\ldots,x_{d+1}})^i = (\widetilde{x_1,\ldots,x_{d+1}})^{(i)}$
    
    \item $(\widetilde{x_1,\ldots,x_{d+1}})^{(i)}= (\widetilde{x_{d+1}}^i) :_{\rr(J)} \widetilde{\K}^{(i)}$
    
    \item $\widetilde{\K}^{(i)} = (\widetilde{x_{d+1}}^i) :_{\rr(J)}  (\widetilde{x_1,\ldots,x_{d+1}})^{(i)}$
\end{enumerate}
\end{proposition}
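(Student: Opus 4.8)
The plan is to work throughout in the Cohen--Macaulay domain $\rr(J)$, which has dimension $d+2$ by \Cref{PropertiesOfA}, and to exploit the fact that the two ideals $\widetilde{\K}$ and $(\widetilde{x_1,\ldots,x_{d+1}})$ are linked inside the complete intersection (indeed, principal up to radical) generated by $\widetilde{x_{d+1}}$. I would first dispense with part (a): since $\ff(J)\cong k[T_1,\ldots,T_{d+1}]$ is a polynomial ring and $(\widetilde{x_1,\ldots,x_{d+1}})$ is the kernel of $\rr(J)\twoheadrightarrow \ff(J)$, this ideal is prime of height $1$; the associated graded ring of $\rr(J)$ with respect to it is again a polynomial ring (one checks that $x_1,\ldots,x_{d+1}$ form an analytic independent / $d$-sequence, or simply that $\gr_{(\x)}(\rr(J))$ is a domain), so all its ordinary powers are unmixed and hence equal to the symbolic powers. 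I expect (a) to be routine once the structure of $\ff(J)$ is invoked.

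**The core: parts (b) and (c).**

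For (b) and (c), the strategy is a symbolic-power linkage argument localized at the unique relevant prime. Set $\p = (\widetilde{x_1,\ldots,x_{d+1}})$, a height-one prime, and note that $\widetilde{x_{d+1}} \in \p$. First I would establish the ``easy'' inclusions that hold without localizing: $\widetilde{\K}^{(i)}\subseteq (\widetilde{x_{d+1}}^i):\p^{(i)}$ and $\p^{(i)}\subseteq (\widetilde{x_{d+1}}^i):\widetilde{\K}^{(i)}$. For these it suffices to check the membership $\p\cdot\widetilde{\K}\subseteq(\widetilde{x_{d+1}})$ at the level of representatives in $S[T_1,\ldots,T_{d+1}]$, then raise to symbolic powers. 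The key computation is that modulo $\H=(\ell_1,\ldots,\ell_{d+1})$ one has, for each $j$, $x_j\cdot(\gcd I_d(B')) \in (x_{d+1})+\H$: this follows from \Cref{JDminors} applied to $B'=B(\psi')$ together with the Cramer-rule relation of \Cref{crlemma}, exactly as in the proof of \Cref{JDminors} --- the point being that $x_j g = \pm \det(B'_j)$ up to sign and $\det(B'_j)$, being a maximal minor of the $d\times(d+1)$ matrix $B'$ whose rows pair with $x_1,\ldots,x_d$, lies in the ideal generated by $\ell_1',\ldots,\ell_d'$ and hence modulo $\H$ reduces into $(x_{d+1})$. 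Since also $x_{d+1}\in\K$ trivially contributes, one gets $\p\widetilde{\K}\subseteq(\widetilde{x_{d+1}})$, and then $\p^{(i)}\widetilde{\K}^{(i)}\subseteq(\widetilde{x_{d+1}}^i)$ by multiplicativity of symbolic powers at the height-one prime $\p$ (both $\widetilde{\K}$ and $\p$ are unmixed of height $1$ by \Cref{PropertiesOfA}, so their symbolic powers are the $\p$-, resp.\ associated-prime-, primary components of ordinary powers).

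**Reversing the inclusions.**

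The substantive direction is the reverse containment in (b), say $ (\widetilde{x_{d+1}}^i):\p^{(i)} \subseteq \widetilde{\K}^{(i)}$; (c) is then symmetric. Here is where I expect the main obstacle. The idea is: $\widetilde{\K}$ is a Cohen--Macaulay height-one ideal, so it is unmixed, and $(\widetilde{x_{d+1}}^i):\p^{(i)}$ is likewise unmixed of height one (colon of a height-one unmixed ideal against an ideal properly containing it, in a CM domain). Both ideals will be shown to agree after localizing at every height-one prime $\q$ of $\rr(J)$, and then unmixedness forces global equality. At a height-one prime $\q \neq \p$ not containing $\widetilde{x_{d+1}}$, both sides localize to the unit ideal or to the localization of $\widetilde{\K}$ in an obvious way; the only interesting case is $\q=\p$ and $\q\supseteq \widetilde{x_{d+1}}$ with $\q\neq\p$. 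At $\q=\p$: $\rr(J)_\p$ is a one-dimensional CM local domain (a DVR, since $\rr(J)$ is normal --- or at least, $\rr(J)_\p$ is regular because $\ff(J)$ is), $\widetilde{x_{d+1}}$ generates the maximal ideal there to some order, and one reads off that $\widetilde{\K}_\p = \widetilde{x_{d+1}}^{?}\rr(J)_\p$ and $\p^{(i)}_\p = \widetilde{x_{d+1}}^{?}\rr(J)_\p$ with exponents adding up correctly to $i$; the colon formula in a DVR is then immediate. The delicate point is controlling the other height-one primes containing $\widetilde{x_{d+1}}$ --- here I would use that $\widetilde{x_{d+1}}$ is a nonzerodivisor on $\rr(J)/\widetilde{\K}$ (shown inside the proof of \Cref{PropertiesOfA}, where $x_{d+1}$ was regular modulo the defining ideal of $\rr(J')$), so $\widetilde{x_{d+1}}^i$ generates a height-one unmixed ideal whose primary decomposition has exactly the components of $\p$ and of $\widetilde{\K}$, giving $(\widetilde{x_{d+1}}^i) = \p^{(i)} \cap \widetilde{\K}^{(i)}$; parts (b) and (c) then both fall out of this single linkage equality together with the easy inclusions above. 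So the crux is proving $(\widetilde{x_{d+1}}^i)=\p^{(i)}\cap\widetilde{\K}^{(i)}$, i.e.\ that $\widetilde{x_{d+1}}^i$ has no embedded or extra primary components --- which I would get from the Cohen--Macaulayness of both $\rr(J)/\p$ and $\rr(J)/\widetilde{\K}$ plus $\widetilde{x_{d+1}}$ being regular on $\rr(J)$ and on $\rr(J)/\widetilde{\K}$, feeding into an unmixedness count via $\dim$ and depth.
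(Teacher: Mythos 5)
Your overall architecture matches the paper's: establish the containment $(\widetilde{x_1,\ldots,x_{d+1}})\,\widetilde{\K}\subseteq(\widetilde{x_{d+1}})$, raise to (symbolic) powers, and extract the two colon formulas from unmixedness at height-one primes. Part (a) is fine and is essentially the paper's observation that $\gr_{(\widetilde{\x})}(\rr(J))\cong\rr(J)$ is a domain via the bigrading. The real problem is your key computation.

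You assert ``$x_j g = \pm \det(B'_j)$'' and that ``$\det(B'_j)$ lies in the ideal generated by $\ell_1',\ldots,\ell_d'$.'' Both are false. By \Cref{JDminors}, the maximal minors of $B'$ factor as $\det(B'_j) = (-1)^{j+1}T_j g'$ with a $T_j$, not an $x_j$; the $x$'s do not appear on that side at all. And the bare minor $\det(B'_j)$ is \emph{not} in $(\ell_1',\ldots,\ell_{d+1}')$ --- Cramer's rule only yields $x_i\det(B'_j)\in(\ell_1',\ldots,\ell_{d+1}')$, i.e.\ $(x_1,\ldots,x_d)\,I_d(B')\subseteq(\ell_1',\ldots,\ell_{d+1}')$. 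Indeed, if $\det(B'_j)$ itself lay in $(\ell')$, then using $\det(B'_j)=\pm T_j g'$ and the positive grade of $(T_1,\ldots,T_{d+1})$ modulo $(\ell')$ one would get $g'\in(\ell')$, forcing $J'$ to be of linear type, contradicting \Cref{Moreyresult}. So your two errors happen to cancel and you land on the right conclusion $x_j g'\in(x_{d+1})+\H$, but the justification as written is unsound. The correct step (which is what the paper does) is: $(x_1,\ldots,x_d)\,I_d(B')\subseteq(\ell')$ by Cramer, $I_d(B')=(g')(T_1,\ldots,T_{d+1})$ by \Cref{JDminors}, and since $\hgt(\ell')=d$ so that $(T_1,\ldots,T_{d+1})$ has positive grade modulo $(\ell')$, one may strip the factor $(T_1,\ldots,T_{d+1})$ to conclude $(x_1,\ldots,x_d)(g')\subseteq(\ell')\subseteq(x_{d+1})+\H$.

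Two smaller points. First, your justification that $\rr(J)_\p$ is a DVR (``since $\rr(J)$ is normal, or because $\ff(J)$ is regular'') is not right as stated: $\rr(J)$ need not be normal, and regularity of the quotient $\rr(J)/\p\cong\ff(J)$ does not by itself make the localization regular. It \emph{is} true that $\rr(J)_\p$ is a DVR, but only because the easy inclusion $\p\widetilde{\K}\subseteq(\widetilde{x_{d+1}})$ together with $\widetilde{\K}\nsubseteq\p$ forces $\p_\p=(\widetilde{x_{d+1}})_\p$, so the maximal ideal is principal. Second, your proposed route for the reverse containments --- prove the intersection identity $(\widetilde{x_{d+1}}^i)=\p^{(i)}\cap\widetilde{\K}^{(i)}$ and then deduce (b) and (c) --- is a legitimate alternative to the paper's more direct localization argument, and it does go through once the key containment is established; but as written it is left at the level of a sketch, whereas the paper's argument (localize the colon at $\p$ for (b), and at the associated primes of $\widetilde{\K}^{(i)}$ for (c), then invoke unmixedness) is shorter and does not require the intersection formula as an intermediate step.
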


\begin{proof} We proceed as in the proof of \cite[3.9]{BM}. 

\begin{enumerate}[(a)]
\setlength\itemsep{1em}
    \item Setting the degrees of the $x_i$ to 1 and the degrees of the $T_i$ to 0 temporarily, we see that $\gr\big((\widetilde{x_1,\ldots,x_{d+1}})\big) \cong \rr(J)$, where $\gr\big((\widetilde{x_1,\ldots,x_{d+1}})\big)$ is the associated graded ring of $(\widetilde{x_1,\ldots,x_{d+1}})$. As $\rr(J)$ is a domain, it follows that $(\widetilde{x_1,\ldots,x_{d+1}})^i = (\widetilde{x_1,\ldots,x_{d+1}})^{(i)}$ for all $i$.

    \item We first claim that $(\widetilde{x_1,\ldots,x_{d+1}})\widetilde{\K}\subseteq (\widetilde{x_{d+1}})$. Recall that $\K = (\ell_1',\ldots,\ell_{d+1}') +(\gcd I_d(B'))+(x_{d+1})$, where $[\ell_1'\ldots\ell_{d+1}'] = [x_1 \ldots x_d] \cdot B'$. Thus modulo $\H$, we see that $(\widetilde{\ell_1',\ldots,\ell_{d+1}'}) \subseteq (\widetilde{x_{d+1}})$. Noting that $B'=B(\psi')$, with $\psi'$ as in \Cref{J'ideal}, we may write $I_d(B') = (g')(T_1,\ldots,T_{d+1})$ where $g' = \gcd (I_d(B'))$, by \Cref{JDminors}. By Cramer's rule we have $(x_1,\ldots,x_d) I_d(B') \subseteq (\ell_1',\ldots,\ell_{d+1}')$, hence 
$$(x_1,\ldots,x_d)(g') \subseteq (\ell_1',\ldots,\ell_{d+1}'):(T_1,\ldots,T_{d+1}).$$ 
However, notice that $(\ell_1',\ldots,\ell_{d+1}')$ is the defining ideal of $\sym(J')$ following \Cref{J'ideal}, hence $\hgt (\ell_1',\ldots,\ell_{d+1}') = d$ by \cite[2.1]{Morey}. Modulo $(\ell_1',\ldots,\ell_{d+1}')$, we then see that $(T_1,\ldots,T_{d+1})$ is an ideal of positive grade, which is annihilated by $(x_1,\ldots,x_d)(g')$, hence $(x_1,\ldots,x_d)(g') \subseteq (\ell_1',\ldots,\ell_{d+1}')$. Noting that $(\widetilde{\ell_1',\ldots,\ell_{d+1}'}) \subseteq (\widetilde{x_{d+1}})$, it then follows that $(\widetilde{x_1,\ldots,x_{d+1}})\widetilde{\K}\subseteq (\widetilde{x_{d+1}})$.

With this, we have $(\widetilde{x_1,\ldots,x_{d+1}})^i\widetilde{\K}^i\subseteq (\widetilde{x_{d+1}}^i)$ for any positive integer $i$. Localizing at height one prime ideals of $\rr(J)$, we see that $(\widetilde{x_1,\ldots,x_{d+1}})^{(i)}\widetilde{\K}^{(i)}\subseteq (\widetilde{x_{d+1}}^i)$ and so $(\widetilde{x_1,\ldots,x_{d+1}})^{(i)}\subseteq (\widetilde{x_{d+1}}^i):\widetilde{\K}^{(i)}$. 
Writing $\K=(\ell_1',\ldots,\ell_{d+1}') + (\gcd I_d(B'))+(x_{d+1})$ as before, recall that $(\ell_1',\ldots,\ell_{d+1}') + (\gcd I_d(B'))$ is the defining ideal of $\rr(J')$. Note that $J'$ is not of linear type as $\mu(J') > \dim S'$, hence $\gcd I_d(B')$ is nonzero in $k[T_1,\ldots,T_{d+1}]$ and so $\widetilde{\K}\nsubseteq (\widetilde{x_1,\ldots,x_{d+1}})$. As $(\widetilde{x_1,\ldots,x_{d+1}})$ is the unique associated prime of $(\widetilde{x_1,\ldots,x_{d+1}})^{(i)}$, it follows that $\widetilde{\K}^{(i)}$ and $(\widetilde{x_1,\ldots,x_{d+1}})^{(i)}$ have no associated prime in common. From this it follows that $(\widetilde{x_{d+1}}^i):\widetilde{\K}^{(i)} \subseteq (\widetilde{x_1,\ldots,x_{d+1}})^{(i)}$.

\item  As before, we have $(\widetilde{x_1,\ldots,x_{d+1}})^{(i)}\widetilde{\K}^{(i)}\subseteq (\widetilde{x_{d+1}}^i)$, hence $\widetilde{\K}^{(i)} \subseteq (\widetilde{x_{d+1}}^i) :_{\rr(J)}  (\widetilde{x_1,\ldots,x_{d+1}})^{(i)}$. To show the reverse containment, recall that $(\widetilde{x_1,\ldots,x_{d+1}})$ is not an associated prime of $\widetilde{\K}^{(i)}$. With this and noting that $\widetilde{x_{d+1}}^i\in \widetilde{\K}^{(i)}$, we see that $(\widetilde{x_{d+1}}^i) :_{\rr(J)}  (\widetilde{x_1,\ldots,x_{d+1}})^{(i)} \subseteq \widetilde{\K}^{(i)}$. \qedhere
\end{enumerate}
\end{proof}

With parts (b) and (c) of \Cref{colons}, one says that $(\widetilde{x_1,\ldots,x_{d+1}})^{(i)}$ and $\widetilde{\K}^{(i)}$ are \textit{linked} \cite{Huneke1}.

\begin{corollary}\label{Kscm}
The $\rr(J)$-ideal $\widetilde{\K}$ is generically a complete intersection and is strongly Cohen-Macaulay.
\end{corollary}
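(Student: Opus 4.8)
The plan is to derive both assertions from the linkage recorded in \Cref{colons}. Taking $i=1$ there, and using that $\widetilde{\K}$ is Cohen--Macaulay by \Cref{PropertiesOfA} (hence unmixed, so that $\widetilde{\K}^{(1)}=\widetilde{\K}$), one obtains that $\widetilde{\K}$ and the prime ideal $\p:=(\widetilde{x_1,\ldots,x_{d+1}})$ are linked in $\rr(J)$ through the complete intersection $(\widetilde{x_{d+1}})$; explicitly, $\widetilde{\K}=(\widetilde{x_{d+1}}):_{\rr(J)}\p$ and $\p=(\widetilde{x_{d+1}}):_{\rr(J)}\widetilde{\K}$. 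Here $\widetilde{x_{d+1}}$ is a nonzerodivisor on the domain $\rr(J)$, since $x_{d+1}\notin\H$ (the ideal $\H$ is generated in bidegree $(1,1)$). Every Cohen--Macaulayness input below comes from \Cref{PropertiesOfA}.

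For the statement that $\widetilde{\K}$ is generically a complete intersection, I would localize at a minimal prime $\mathfrak{q}$ of $\widetilde{\K}$. It was shown in the proof of \Cref{colons} that $\widetilde{\K}\nsubseteq\p$, so $\p$ is not a minimal prime of $\widetilde{\K}$; since $\p$ is prime of height $1=\hgt\widetilde{\K}$, this forces $\mathfrak{q}\neq\p$ and hence $\p\nsubseteq\mathfrak{q}$, so $\p_{\mathfrak{q}}=\rr(J)_{\mathfrak{q}}$. Then $\widetilde{\K}_{\mathfrak{q}}=\bigl((\widetilde{x_{d+1}}):_{\rr(J)}\p\bigr)_{\mathfrak{q}}=(\widetilde{x_{d+1}})_{\mathfrak{q}}$, which is principal and generated by a nonzerodivisor; as $\hgt\widetilde{\K}=1$, this is what was claimed.

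For strong Cohen--Macaulayness I would compute Koszul homology on the generating set $\widetilde{\K}=(\widetilde{g'},\widetilde{x_{d+1}})$, where $g'=\gcd I_d(B')$ (recall $\K=(\ell_1,\ldots,\ell_{d+1})+(g')+(x_{d+1})$, so modulo $\H$ only these two generators survive; strong Cohen--Macaulayness may be checked on any generating set). Since $\widetilde{x_{d+1}}$ is a nonzerodivisor, reduction modulo it gives
\[
H_i\bigl(\widetilde{g'},\widetilde{x_{d+1}};\rr(J)\bigr)\ \cong\ H_i\bigl(\overline{\widetilde{g'}};\,\rr(J)/(\widetilde{x_{d+1}})\bigr),
\]
which vanishes for $i\ge 2$, is $\rr(J)/\widetilde{\K}$ for $i=0$, and is $0:_{\rr(J)/(\widetilde{x_{d+1}})}\overline{\widetilde{g'}}$ for $i=1$. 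The module for $i=0$ is Cohen--Macaulay. For $i=1$, in $\rr(J)/(\widetilde{x_{d+1}})$ the image of $\widetilde{\K}$ is $(\overline{\widetilde{g'}})$, so $0:_{\rr(J)/(\widetilde{x_{d+1}})}\overline{\widetilde{g'}}=\bigl((\widetilde{x_{d+1}}):_{\rr(J)}\widetilde{\K}\bigr)/(\widetilde{x_{d+1}})=\p/(\widetilde{x_{d+1}})$ by the linkage. Feeding this into the depth lemma for
\[
0\longrightarrow \p/(\widetilde{x_{d+1}})\longrightarrow \rr(J)/(\widetilde{x_{d+1}})\longrightarrow \rr(J)/\p\longrightarrow 0,
\]
in which $\rr(J)/(\widetilde{x_{d+1}})$ is Cohen--Macaulay of dimension $d+1$ (it is $\rr(J)$ modulo the nonzerodivisor $\widetilde{x_{d+1}}$) and $\rr(J)/\p\cong\ff(J)\cong k[T_1,\ldots,T_{d+1}]$ is Cohen--Macaulay of dimension $d+1$, yields $\dep\bigl(\p/(\widetilde{x_{d+1}})\bigr)\ge d+1$; since the annihilator of $\p/(\widetilde{x_{d+1}})$ is $\widetilde{\K}/(\widetilde{x_{d+1}})$, its dimension is $\dim\rr(J)/\widetilde{\K}=d+1$, so it is Cohen--Macaulay. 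Thus all Koszul homology modules of $\widetilde{\K}$ are Cohen--Macaulay, i.e.\ $\widetilde{\K}$ is strongly Cohen--Macaulay.

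I do not anticipate a real obstacle: the single point needing care is the identification in the last paragraph --- that reducing modulo the nonzerodivisor $\widetilde{x_{d+1}}$ collapses the Koszul complex of $\widetilde{\K}$ to that of the one element $\overline{\widetilde{g'}}$, and that the linkage of \Cref{colons} is precisely what rewrites the surviving $H_1$ as $\p/(\widetilde{x_{d+1}})$, after which the depth lemma closes the argument. One could instead quote the general principle that linkage through a complete intersection preserves both ``generically a complete intersection'' and ``strongly Cohen--Macaulay'', applied to $\widetilde{\K}$ and $\p$; the direct computation above has the advantage of not requiring a separate analysis of $\p$.
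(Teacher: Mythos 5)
Your proof is correct. The first half (generically a complete intersection) is essentially the paper's argument: both localize at a height-one prime containing $\widetilde{\K}$, use that $(\widetilde{x_1,\ldots,x_{d+1}})$ is not such a prime, and then invoke the colon descriptions of \Cref{colons} to conclude $\widetilde{\K}_\p=(\widetilde{x_{d+1}})_\p$. For strong Cohen--Macaulayness, however, you take a genuinely different route. The paper observes that $\widetilde{\K}=(\widetilde{g'},\widetilde{x_{d+1}})$ is a height-one Cohen--Macaulay almost complete intersection which is generically a complete intersection, and then quotes Huneke's theorem \cite[2.2]{Huneke2} as a black box. You instead compute the Koszul homology by hand: reducing modulo the nonzerodivisor $\widetilde{x_{d+1}}$ collapses the Koszul complex to that of the single element $\overline{\widetilde{g'}}$, the linkage $(\widetilde{x_{d+1}}):\widetilde{\K}=(\widetilde{x_1,\ldots,x_{d+1}})$ from \Cref{colons} identifies $H_1$ with $(\widetilde{x_1,\ldots,x_{d+1}})/(\widetilde{x_{d+1}})$, and a depth chase along $0\to (\widetilde{x_1,\ldots,x_{d+1}})/(\widetilde{x_{d+1}})\to \rr(J)/(\widetilde{x_{d+1}})\to \ff(J)\to 0$ (together with the fact that $\widetilde{\K}$ annihilates $H_1$, bounding its dimension by $d+1$) shows $H_1$ is Cohen--Macaulay. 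Your argument is essentially a proof of the special case of Huneke's theorem that is needed here, so it is more self-contained and makes the role of the linkage explicit, at the cost of a longer verification; the paper's citation is shorter but opaque. All the steps you flag as delicate (the independence of strong Cohen--Macaulayness from the generating set, the reduction of Koszul homology modulo a regular element, and the identification $(\widetilde{x_{d+1}}):\widetilde{g'}=(\widetilde{x_{d+1}}):\widetilde{\K}$) are indeed the right points to check, and they all go through.
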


\begin{proof}
Recall from \Cref{PropertiesOfA} that $\widetilde{\K}$ is a Cohen-Macaulay ideal of height one. From the proof of \Cref{colons} we had seen that $(\widetilde{x_1,\ldots,x_{d+1}})$ is not an associated prime of $\widetilde{\K}$. Thus if $\p$ is an associated prime of $\widetilde{\K}$, by \Cref{colons} we have $(\widetilde{x_{d+1}})_\p : \widetilde{\K}_\p = (\widetilde{x_1,\ldots,x_{d+1}})_\p = \rr(J)_\p$. Thus we have $\widetilde{\K}_\p \subseteq (\widetilde{x_{d+1}})_\p$ and so $\widetilde{\K}_\p= (\widetilde{x_{d+1}})_\p$ as $x_{d+1} \in \K$, which shows that $\widetilde{\K}$ is generically a complete intersection.

Notice that $\widetilde{\K} = (\widetilde{g'},\widetilde{x_{d+1}})$ where $g'= \gcd I_d(B')$, hence $\widetilde{\K}$ is a Cohen-Macaulay almost complete intersection ideal, following \Cref{PropertiesOfA}. Moreover, we had just seen that $\widetilde{\K}$ is generically a complete intersection, hence it is a strongly Cohen-Macaulay $\rr(J)$-ideal by \cite[2.2]{Huneke2}. 
\end{proof}

We now give an alternative description of the $\rr(J)$-ideal $\widetilde{\A}$. Notice that this is the kernel of the induced map of Rees algebras $\rr(J)\rightarrow \rr(I)$. Consider the fractional ideal $\frac{\widetilde{f}\,\widetilde{\K}^{(m)}}{\widetilde{x_{d+1}}^m}$ and note that this is actually an $\rr(J)$-ideal by \Cref{colons}, as $f\in (x_1,\ldots,x_{d+1})^m$.

\begin{theorem}\label{DandA}
In $\rr(J)$, we have $\widetilde{\A}= \frac{\widetilde{f}\,\widetilde{\K}^{(m)}}{\widetilde{x_{d+1}}^m}$.
\end{theorem}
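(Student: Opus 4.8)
The plan is to prove the two containments separately, exploiting that both sides are (honest) $\rr(J)$-ideals by \Cref{colons} and that $\rr(J)$ is a Cohen--Macaulay domain in which $\widetilde{x_{d+1}}$ is a nonzerodivisor. Throughout I work inside the total ring of quotients and clear denominators, so the claimed equality $\widetilde{x_{d+1}}^m\cdot\widetilde{\A}=\widetilde{f}\,\widetilde{\K}^{(m)}$ is what I actually verify.

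First I would establish the inclusion $\widetilde{f}\,\widetilde{\K}^{(m)}\subseteq \widetilde{x_{d+1}}^m\,\widetilde{\A}$. Start from the generators: by \Cref{IndexOfSat} and \Cref{Anotation}, $\A = \LL:(x_1,\dots,x_{d+1})^m$, so it suffices to show $f\cdot\K^{(m)}\subseteq (x_{d+1}^m)\cdot\big(\LL:(x_1,\dots,x_{d+1})^m\big)$ after passing to $\rr(J)$, i.e.\ that $\tfrac{\widetilde{f}}{\widetilde{x_{d+1}}^m}\cdot\widetilde{\K}^{(m)}$ annihilates $(\widetilde{x_1,\dots,x_{d+1}})^m$ modulo $\widetilde{\LL}$. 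But $\widetilde{\LL}=0$ in $\rr(J)$ (as $\LL = \H + (f)$ and we are working modulo $\H$), so this reduces to showing $\tfrac{\widetilde f\,\widetilde\K^{(m)}}{\widetilde{x_{d+1}}^m}\cdot(\widetilde{x_1,\dots,x_{d+1}})^m\subseteq (\widetilde f)$; by \Cref{colons}(b) and part (a), $(\widetilde{x_1,\dots,x_{d+1}})^m=(\widetilde{x_1,\dots,x_{d+1}})^{(m)}=(\widetilde{x_{d+1}}^m):\widetilde\K^{(m)}$, so $\widetilde\K^{(m)}\cdot(\widetilde{x_1,\dots,x_{d+1}})^m\subseteq(\widetilde{x_{d+1}}^m)$ and multiplying by $\tfrac{\widetilde f}{\widetilde{x_{d+1}}^m}$ lands in $(\widetilde f)$. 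This gives $\tfrac{\widetilde f\,\widetilde\K^{(m)}}{\widetilde{x_{d+1}}^m}\subseteq \widetilde\A$.

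For the reverse inclusion $\widetilde{\A}\subseteq \tfrac{\widetilde{f}\,\widetilde{\K}^{(m)}}{\widetilde{x_{d+1}}^m}$, equivalently $\widetilde{x_{d+1}}^m\,\widetilde\A\subseteq \widetilde f\,\widetilde\K^{(m)}$: take a bihomogeneous $a\in\A$, so $(x_1,\dots,x_{d+1})^m a\subseteq \LL = \H+(f)$, hence in $\rr(J)$ we get $(\widetilde{x_1,\dots,x_{d+1}})^m\,\widetilde a\subseteq (\widetilde f)$. Write $\widetilde{x_{d+1}}^m\,\widetilde a = \widetilde f\,\widetilde c$ for a unique $\widetilde c\in\rr(J)$ (unique since $\widetilde f$ is a nonzerodivisor, as $\rr(J)$ is a domain and $f\ne 0$ — here one uses that $f\ne 0$ in $S$, so $\widetilde f\ne0$). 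I must show $\widetilde c\in\widetilde\K^{(m)}$. Since $\widetilde a\in\rr(J)$, we have $\widetilde f\,\widetilde c\in(\widetilde{x_{d+1}}^m)$, and then for every generator $\widetilde x_j$ of $(\widetilde{x_1,\dots,x_{d+1}})$: $(\widetilde{x_1,\dots,x_{d+1}})^m\widetilde c = \tfrac{(\widetilde{x_1,\dots,x_{d+1}})^m\widetilde{x_{d+1}}^m\widetilde a}{\widetilde f}$; using $(\widetilde{x_1,\dots,x_{d+1}})^m\widetilde a\subseteq(\widetilde f)$ we get $(\widetilde{x_1,\dots,x_{d+1}})^m\widetilde c\subseteq (\widetilde{x_{d+1}}^m)$, i.e.\ $\widetilde c\in (\widetilde{x_{d+1}}^m):(\widetilde{x_1,\dots,x_{d+1}})^m = (\widetilde{x_{d+1}}^m):(\widetilde{x_1,\dots,x_{d+1}})^{(m)} = \widetilde\K^{(m)}$ by \Cref{colons}(c) together with part (a). This completes the reverse inclusion, and the two together give the theorem.

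The main obstacle I anticipate is keeping the bookkeeping between the three levels --- the ambient $S[T_1,\dots,T_{d+1}]$-ideals $\LL$, $\A$, $\K$ and their images in $\rr(J)$ --- completely straight, in particular verifying carefully that $\widetilde\LL=0$ and that the fractional-ideal manipulations (clearing $\widetilde f$ and $\widetilde{x_{d+1}}^m$) are legitimate, which hinges on $\widetilde f$ and $\widetilde{x_{d+1}}$ being nonzerodivisors in the domain $\rr(J)$. Once the symmetric-power bigrading is used to guarantee everything lands in $\rr(J)$ rather than just its total quotient ring (again via \Cref{colons}, which makes $\tfrac{\widetilde f\,\widetilde\K^{(m)}}{\widetilde{x_{d+1}}^m}$ an honest ideal), the argument is essentially a pair of applications of the linkage identities in \Cref{colons}. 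I would also double-check that no extra hypothesis of \Cref{colons} is needed beyond $f\in(x_1,\dots,x_{d+1})^m$, which holds since $\deg f = m$.
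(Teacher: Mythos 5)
Your proof is correct, and for the hard containment it takes a genuinely different route from the paper. The inclusion $\frac{\widetilde{f}\,\widetilde{\K}^{(m)}}{\widetilde{x_{d+1}}^m}\subseteq\widetilde{\A}$ is handled essentially as in the paper (multiply by $(\widetilde{x_1,\ldots,x_{d+1}})^m$ and use \Cref{colons}(b)). For the reverse inclusion, however, the paper argues globally-to-locally: it observes that $\frac{\widetilde{f}\,\widetilde{\K}^{(m)}}{\widetilde{x_{d+1}}^m}\cong\widetilde{\K}^{(m)}$ is an unmixed height-one ideal satisfying $S_2$, reduces to checking equality at height-one primes, and at $\p=(\widetilde{x_1,\ldots,x_{d+1}})$ invokes the analytic spread computation $\ell(I)=d$ from \cite[4.3]{UV} to see $\widetilde{\A}\nsubseteq\p$. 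You instead give a direct element-wise argument: for $\widetilde{a}\in\widetilde{\A}=(\widetilde{f}):(\widetilde{x_1,\ldots,x_{d+1}})^m$ you write $\widetilde{x_{d+1}}^m\widetilde{a}=\widetilde{f}\,\widetilde{c}$ in the domain $\rr(J)$ and show $\widetilde{c}\in(\widetilde{x_{d+1}}^m):(\widetilde{x_1,\ldots,x_{d+1}})^m=\widetilde{\K}^{(m)}$ using \Cref{colons}(a) and (c). This is more elementary and self-contained — no localization, no $S_2$ argument, no appeal to analytic spread — whereas the paper's localization argument produces local facts ($\widetilde{\A}_\p=(\widetilde{f})_\p$ for $\p\neq(\widetilde{x_1,\ldots,x_{d+1}})$, and $\widetilde{\A}\nsubseteq(\widetilde{x_1,\ldots,x_{d+1}})$) that are reused later in \Cref{nissmallest} and \Cref{gcdsnotzero}. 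One small slip: $\widetilde{\LL}$ is not zero in $\rr(J)$; since $\LL=\H+(f)$ and $f\notin\H$ (by bidegree), one has $\widetilde{\LL}=(\widetilde{f})\neq 0$. Your computation in fact uses $\widetilde{\LL}=(\widetilde{f})$ throughout, so nothing downstream is affected, but the parenthetical assertion should be corrected.
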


\begin{proof}
Writing $\D=\frac{\widetilde{f}\,\widetilde{\K}^{(m)}}{\widetilde{x_{d+1}}^m}$, we begin by showing that $\D\subseteq \widetilde{\A}$. Recall that $\rr(J)$ is a domain by \Cref{PropertiesOfA}. Hence for any $a\in (x_1,\ldots,x_{d+1})^{m}$ we have the equality $\frac{\widetilde{f}\,\widetilde{\K}^{(m)}}{\widetilde{x_{d+1}}^m} \cdot \widetilde{a} =  \frac{\widetilde{a}\,\widetilde{\K}^{(m)}}{\widetilde{x_{d+1}}^m} \cdot \widetilde{f}$. Notice that $\frac{\widetilde{a}\,\widetilde{\K}^{(m)}}{\widetilde{x_{d+1}}^m}$ is an $\rr(J)$-ideal by \Cref{colons}, hence it follows that $\D (\widetilde{x_1,\ldots,x_{d+1}})^{m} \subset (\widetilde{f}) = \widetilde{\LL}$. Thus $\D \subseteq \widetilde{\LL}: (\widetilde{x_1,\ldots,x_{d+1}})^{m} = \widetilde{\A}$, following \Cref{Anotation}.

To show this containment is actually an equality, we proceed as in the proof of \cite[3.10]{BM}. Recall that $\rr(J)$ is a Cohen-Macaulay domain and note that $\widetilde{\K}^{(m)}$ is an unmixed ideal of height one. Equivalently, $\widetilde{\K}^{(m)}$ satisfies Serre's condition $S_2$, as an $\rr(J)$-module. Thus $\D$ is also an unmixed $\rr(J)$-ideal of height one since $\D \cong \widetilde{\K}^{(m)}$ and the condition $S_2$ is preserved under isomorphism. As $\D \subseteq \widetilde{\A}$, it suffices to show that these ideals agree locally at the associated primes of $\D$, in order to conclude that $\D = \widetilde{\A}$. As these associated primes have height one, we show that $\D_\p = \widetilde{\A}_\p$ for any prime $\rr(J)$-ideal $\p$ with height one.

Recall from \Cref{PropertiesOfA} that $(\widetilde{x_1,\ldots,x_{d+1}})$ is a prime ideal of height one in $\rr(J)$. If $\p \neq (\widetilde{x_1,\ldots,x_{d+1}})$, we see that $\widetilde{\A}_\p = \widetilde{\LL}_\p : (\widetilde{x_1,\ldots,x_{d+1}})_\p^m = (\widetilde{f})_\p : \rr(J)_\p$, hence $\widetilde{\A}_\p \subseteq (\widetilde{f})_\p$ and so $\widetilde{\A} = (\widetilde{f})_\p$, as $f\in \A$. Additionally, by \Cref{colons} and repeating the argument in the proof of \Cref{Kscm}, it follows that $\widetilde{\K}_\p = (\widetilde{x_{d+1}})_\p$, hence $\D_\p = (f)_\p$ as well.

 Now suppose that $\p =(\widetilde{x_1,\ldots,x_{d+1}})$ and we first note that $\widetilde{\A}\nsubseteq (\widetilde{x_1,\ldots,x_{d+1}})$. Indeed, the analytic spread of $J$ is $\ell(J) = d+1$ since $\ff(J) \cong k[T_1,\ldots,T_{d+1}]$, which we had seen in the proof of \Cref{PropertiesOfA}. Moreover, we have $\ell(I) =d$ by \cite[4.3]{UV}. With the isomorphism $\rr(I) \cong \rr(J) / \widetilde{\A}$ and passing to $\ff(I)$, it then follows that $\widetilde{\A}\nsubseteq (\widetilde{x_1,\ldots,x_{d+1}})$. With this, we see that $\widetilde{\A}_\p = \rr(J)_\p$. 
 
 Recall from the proof of \Cref{colons} that $\widetilde{\K} \nsubseteq (\widetilde{x_1,\ldots,x_{d+1}})$, hence $\widetilde{\K}^{(m)}_\p=\rr(J)_\p$ as well. With this and \Cref{colons}, we see that $(\widetilde{x_1,\ldots,x_{d+1}})_\p =  (\widetilde{x_{d+1}})_\p:\rr(J)_\p$, hence $(\widetilde{x_1,\ldots,x_{d+1}})_\p = (\widetilde{x_{d+1}})_\p$. Thus 
 $$\rr(J)_\p = \widetilde{\A}_\p=\widetilde{\LL}_\p: (\widetilde{x_1,\ldots,x_{d+1}})^m_\p=(\widetilde{f})_\p: (\widetilde{x_1,\ldots,x_{d+1}})^m_\p$$
and so $(\widetilde{x_1,\ldots,x_{d+1}})^m_\p \subseteq (\widetilde{f})_\p$. However, as $f\in(x_1,\ldots,x_{d+1})^m$, we have $(\widetilde{f})_\p = (\widetilde{x_1,\ldots,x_{d+1}})^m_\p =  (\widetilde{x_{d+1}})^m_\p$, hence $\D_\p =\widetilde{\K}^{(m)}_\p=\rr(J)_\p = \widetilde{\A}_\p$.
\end{proof}

Recall that, as a consequence of \Cref{IndexOfSat}, the ideal $\A$ is a saturation that can be written as $\A =\LL:(x_1,\ldots,x_{d+1})^\infty = \LL:(x_1,\ldots,x_{d+1})^m$. We end this section by showing that $m$ is the smallest integer for which this second equality holds, i.e. the index of saturation of $\A$.

\begin{proposition}\label{nissmallest}
With the assumptions of \Cref{setting1}, $m$ is the smallest integer such that $\A=\LL:(x_1,\ldots,x_{d+1})^m$.
\end{proposition}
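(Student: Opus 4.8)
The plan is to show that $\LL:(x_1,\ldots,x_{d+1})^{m-1} \subsetneq \A$, which together with \Cref{IndexOfSat} gives that $m$ is minimal. Equivalently, working in $\rr(J) \cong S[T_1,\ldots,T_{d+1}]/\H$, it suffices to show that $\widetilde{\LL}:(\widetilde{x_1,\ldots,x_{d+1}})^{m-1}$ is properly contained in $\widetilde{\A}$. Since $\widetilde{\LL} = (\widetilde f)$ and $f \in (x_1,\ldots,x_{d+1})^m$, the key object is the fractional ideal description from \Cref{DandA}: we have $\widetilde{\A} = \frac{\widetilde f\,\widetilde{\K}^{(m)}}{\widetilde{x_{d+1}}^m}$, and by the same argument one checks $(\widetilde f):(\widetilde{x_1,\ldots,x_{d+1}})^{m-1} = \frac{\widetilde f\,\widetilde{\K}^{(m-1)}}{\widetilde{x_{d+1}}^{m-1}}$ (using \Cref{colons}, noting $f\in(x_1,\ldots,x_{d+1})^m\subseteq(x_1,\ldots,x_{d+1})^{m-1}$ so the fraction makes sense). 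So the statement reduces to showing
$$\frac{\widetilde f\,\widetilde{\K}^{(m-1)}}{\widetilde{x_{d+1}}^{m-1}} \;\subsetneq\; \frac{\widetilde f\,\widetilde{\K}^{(m)}}{\widetilde{x_{d+1}}^{m}},$$
and since $\rr(J)$ is a domain and $\widetilde f,\widetilde{x_{d+1}}$ are nonzero, clearing denominators this is equivalent to $\widetilde{x_{d+1}}\,\widetilde{\K}^{(m-1)} \subsetneq \widetilde{\K}^{(m)}$ inside $\rr(J)$.

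The first step is therefore to establish the proper containment $\widetilde{x_{d+1}}\,\widetilde{\K}^{(m-1)} \subsetneq \widetilde{\K}^{(m)}$. The containment itself is immediate since $\widetilde{x_{d+1}} \in \widetilde{\K}$. For properness, I would argue by localizing at the height-one prime $(\widetilde{x_1,\ldots,x_{d+1}})$: recall from \Cref{colons}(c) that $\widetilde{\K}^{(i)} = (\widetilde{x_{d+1}}^i):(\widetilde{x_1,\ldots,x_{d+1}})^{(i)}$, and localizing, $(\widetilde{x_1,\ldots,x_{d+1}})_\p = (\widetilde{x_{d+1}})_\p$ is the maximal ideal of the one-dimensional local domain $\rr(J)_\p$, so $\widetilde{\K}^{(i)}_\p = (\widetilde{x_{d+1}}^i)_\p : (\widetilde{x_{d+1}}^i)_\p = \rr(J)_\p$ for all $i$. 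That makes the localization at $\p$ unhelpful, so instead I would look at a different height-one prime or compare degrees/colengths: since $\widetilde{\K}^{(m)} = \widetilde{\K}^{(m-1)}\cdot\widetilde{\K}$ would hold only if $\widetilde{\K}$ behaved like its symbolic powers — but more directly, $\widetilde{x_{d+1}}\widetilde{\K}^{(m-1)} = \widetilde{\K}^{(m)}$ would force $\widetilde{\K}^{(m)}$ to be contained in the principal ideal $(\widetilde{x_{d+1}})$, hence $\widetilde{\K}^{(m)} = (\widetilde{x_{d+1}})^{(m)} = (\widetilde{x_{d+1}}^m)$ (using that $\widetilde{x_{d+1}}$ generates the symbolic powers of $\p$, shown in \Cref{DandA}), forcing $\widetilde\K \subseteq (\widetilde{x_{d+1}})$ at all height-one primes, i.e. $\widetilde\K$ equal to $(\widetilde{x_{d+1}})$ up to primary components. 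Since $\widetilde{\K}=(\widetilde{g'},\widetilde{x_{d+1}})$ with $g' = \gcd I_d(B')$ nonzero in $k[T_1,\ldots,T_{d+1}]$ and not in $(x_1,\ldots,x_{d+1})$, the minor $g'$ survives in $\ff(J)\cong k[T_1,\ldots,T_{d+1}]$, so $\widetilde{g'}\notin(\widetilde{x_1,\ldots,x_{d+1}})\supseteq(\widetilde{x_{d+1}})$-primary part — giving the contradiction.

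The cleanest route, which I would ultimately adopt, is to pass to the special fiber. Consider the composite $\rr(J) \twoheadrightarrow \rr(I) \twoheadrightarrow \ff(I)$; its kernel contains $\widetilde{\A}$ and $(\widetilde{x_1,\ldots,x_{d+1}})$. From the proof of \Cref{DandA}, $\widetilde{\A}\nsubseteq(\widetilde{x_1,\ldots,x_{d+1}})$ because $\ell(I) = d < d+1 = \ell(J)$, so the image of $\widetilde\A$ in $\ff(J) \cong k[T_1,\ldots,T_{d+1}]$ is a nonzero ideal; in fact this image is the defining ideal of $\ff(I)$ inside $\ff(J)$, which by \Cref{Moreyresult} applied to $J'$ is generated (up to radical/principal part) by $g' = \gcd I_d(B')$, a form of some positive degree $e$ in the $T_i$. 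The bigraded structure is the key: $\widetilde\A = \frac{\widetilde f\,\widetilde{\K}^{(m)}}{\widetilde{x_{d+1}}^m}$ and the element $\frac{\widetilde f\, \widetilde{g'}^{\,?}}{\widetilde{x_{d+1}}^m}$ realizing a generator of the fiber ideal has $x$-degree $0$, so it requires the full $m$-th symbolic power of $\widetilde\K$ to cancel all $m$ copies of $\widetilde{x_{d+1}}$ in the denominator; were $\A = \LL:(x_1,\ldots,x_{d+1})^{m-1}$, this same element would have to lie in $\frac{\widetilde f\,\widetilde{\K}^{(m-1)}}{\widetilde{x_{d+1}}^{m-1}}$, i.e. $\widetilde{x_{d+1}}\mid (\text{something in }\widetilde{\K}^{(m-1)})$ producing it — and a degree count on $\widetilde{g'}$ (it contributes $x$-degree $0$ but the symbolic powers of $\widetilde\K$ contribute $x_{d+1}$-content only through the $\widetilde{x_{d+1}}$ generator) shows $\widetilde{\K}^{(m-1)}$ cannot supply the needed $\widetilde{x_{d+1}}$-divisibility without already lying in $(\widetilde{x_{d+1}})$, contradiction.

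I expect the main obstacle to be making the degree/divisibility bookkeeping in this last paragraph fully rigorous — specifically, pinning down that the element of $\widetilde{\A}$ witnessing $\widetilde{\A}\nsubseteq(\widetilde{x_1,\ldots,x_{d+1}})$ genuinely requires $\widetilde{x_{d+1}}^m$ (not $\widetilde{x_{d+1}}^{m-1}$) in the denominator. The safest way to handle this is to track bidegrees carefully: the generator of the special fiber ideal sits in bidegree $(0, e)$ for the appropriate $e$, while every element of $\frac{\widetilde f\,\widetilde{\K}^{(j)}}{\widetilde{x_{d+1}}^{j}}$ of $x$-degree $0$ must have its numerator $\widetilde f\cdot(\text{elt of }\widetilde{\K}^{(j)})$ divisible by $\widetilde{x_{d+1}}^{j}$, and since $\widetilde f$ has $x$-degree $m > j = m-1$ while $\widetilde{\K}^{(j)}$'s generators in the relevant bidegree have bounded $x$-degree $j$, a parity/degree comparison forces the numerator into $(\widetilde{x_{d+1}}^{m-1})$ only when the $\widetilde\K$-factor is already a multiple of $\widetilde{x_{d+1}}^{m-1-?}$, which cannot produce a fiber generator. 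Once this bidegree comparison is set up, the contradiction is immediate.
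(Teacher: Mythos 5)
Your reduction of the statement to the strict containment $\widetilde{x_{d+1}}\,\widetilde{\K}^{(m-1)} \subsetneq \widetilde{\K}^{(m)}$ in $\rr(J)$ is sound and genuinely different from the paper's argument, and in fact your second paragraph already contains everything needed to finish: if equality held, then $\widetilde{\K}^{(m)} \subseteq (\widetilde{x_{d+1}}) \subseteq (\widetilde{x_1,\ldots,x_{d+1}})$, while $\widetilde{\K}^m \subseteq \widetilde{\K}^{(m)}$ and $(\widetilde{x_1,\ldots,x_{d+1}})$ is a prime not containing $\widetilde{\K}$ (proof of \Cref{colons}), a contradiction. Equivalently, your dismissal of localizing at $\p=(\widetilde{x_1,\ldots,x_{d+1}})$ as ``unhelpful'' is mistaken: you correctly computed $\widetilde{\K}^{(i)}_\p = \rr(J)_\p$, but the left-hand side carries the extra factor $\widetilde{x_{d+1}}$, so $\bigl(\widetilde{x_{d+1}}\widetilde{\K}^{(m-1)}\bigr)_\p = (\widetilde{x_{d+1}})_\p = (\widetilde{x_1,\ldots,x_{d+1}})_\p \neq \rr(J)_\p = \widetilde{\K}^{(m)}_\p$, which settles properness on the spot. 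Two caveats: the identity $(\widetilde{f}):(\widetilde{x_1,\ldots,x_{d+1}})^{m-1} = \frac{\widetilde{f}\,\widetilde{\K}^{(m-1)}}{\widetilde{x_{d+1}}^{m-1}}$ that you invoke ``by the same argument'' does hold, but it requires rerunning the unmixedness-and-localization argument of \Cref{DandA} with $m-1$ in place of $m$ (in particular observing that a colon into the principal ideal $(\widetilde{f})$ is unmixed of height one), so it deserves a few sentences rather than a wave. More seriously, the route you say you would ``ultimately adopt'' --- the special-fiber/bidegree bookkeeping in your last two paragraphs --- is not a proof as written: it contains an unresolved exponent, and the claimed degree comparison is never made precise (as you yourself acknowledge). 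You should discard that final route and keep the argument you abandoned. For comparison, the paper avoids $\widetilde{\K}$ entirely here: it localizes the hypothesis $\A = \LL:(x_1,\ldots,x_{d+1})^i$ at $\p$, uses $\widetilde{\A}_\p = \rr(J)_\p$ and $(\widetilde{f})_\p = (\widetilde{x_1,\ldots,x_{d+1}})^m_\p$ from \Cref{DandA} to conclude $(\widetilde{x_1,\ldots,x_{d+1}})^i_\p = (\widetilde{x_1,\ldots,x_{d+1}})^m_\p$, then contracts using \Cref{colons}(a) to get the impossible equality of ordinary powers; this is shorter and sidesteps the fractional-ideal identification you need.
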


\begin{proof}
Suppose, for a contradiction, that there is some positive integer $i<m$ such that $\A=\LL:(x_1,\ldots,x_{d+1})^i$. In $\rr(J)$, we then have $\widetilde{\A}=\widetilde{\LL}:(\widetilde{x_1,\ldots,x_{d+1}})^i$. Now localizing at $\p=(\widetilde{x_1,\ldots,x_{d+1}})$ and noting that $\widetilde{\A}_\p = \rr(J)_\p$, as we had seen in the proof of \Cref{DandA}, we have $\rr(J)_\p=(\widetilde{f})_\p:(\widetilde{x_1,\ldots,x_{d+1}})_\p^i$, hence $(\widetilde{x_1,\ldots,x_{d+1}})^i_\p \subseteq (\widetilde{f})_\p$. As $f$ has degree $m>i$ in $S$, we have $f\in (x_1,\ldots,x_{d+1})^i$, hence $(\widetilde{f})_\p= (\widetilde{x_1,\ldots,x_{d+1}})^i_\p$. However, $(\widetilde{f})_\p= (\widetilde{x_1,\ldots,x_{d+1}})^m_\p$ as well, which we had seen in the proof of \Cref{DandA}. Thus we have $(\widetilde{x_1,\ldots,x_{d+1}})^i_\p = (\widetilde{x_1,\ldots,x_{d+1}})^m_\p$ in $\rr(J)_\p$. Now contracting back to $\rr(J)$ and noting that the powers and symbolic powers of this ideal agree by \Cref{colons}, we have
$$ (\widetilde{x_1,\ldots,x_{d+1}})^i= (\widetilde{x_1,\ldots,x_{d+1}})^{(i)} = (\widetilde{x_1,\ldots,x_{d+1}})^{(m)} =(\widetilde{x_1,\ldots,x_{d+1}})^m$$
which is impossible.
\end{proof}

\section{gcd-iterations}\label{iterationssec}

In this section we present a recursive algorithm which produces equations in $\A$. This algorithm is an adaptation of the method of \textit{modified Jacobian dual iterations} used in \cite{Weaver} and is similar to the methods used in \cite{CHW} and \cite{BM}. This process consists of matrix constructions analogous to a modified Jacobian dual. We begin this section by studying how the maximal minors of such matrices factor, in a manner similar to the proof of \Cref{JDminors}.

\begin{proposition}\label{gcds}
With the assumptions of \Cref{setting1} and $C$ any column with $d+1$ entries in $S[T_1,\ldots,T_{d+1}]$, consider the $(d+1) \times (d+2)$ matrix $\mathfrak{B}=[B(\psi)\,|\,C]$ and let $\mathfrak{B}_j$ denote the submatrix obtained by deleting the $j^{\text{th}}$ column of $\mathfrak{B}$. There exists a polynomial $\mathfrak{g} \in S[T_1,\ldots, T_{d+1}]$ such that for all $1\leq j \leq d+1$, one has $\det \mathfrak{B}_j = (-1)^{j+1} T_j \cdot \mathfrak{g}$. In particular, $\mathfrak{g}$ is the greatest common divisor of the maximal minors of $\mathfrak{B}$.
\end{proposition}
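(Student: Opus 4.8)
The plan is to mimic the proof of \Cref{JDminors} by exhibiting a syzygy of the matrix $\mathfrak{B}$. Writing $\T = T_1,\ldots,T_{d+1}$, the key observation is that $B(\psi)\cdot [\,\T\,]^t = 0$: indeed, since $[\,\x\,]\cdot B(\psi) = [\,\T\,]\cdot \psi$ with $\x = x_1,\ldots,x_{d+1}$, we get $[\,\x\,]\cdot B(\psi)\cdot [\,\T\,]^t = [\,\T\,]\cdot \psi\cdot [\,\T\,]^t = 0$ because $\psi$ is alternating, and since the entries of $B(\psi)$ lie in the polynomial subring $k[T_1,\ldots,T_{d+1}]$ on which $x_1,\ldots,x_{d+1}$ form a regular sequence, we may cancel the $\x$ to conclude $B(\psi)\cdot[\,\T\,]^t = 0$. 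This is exactly as in \Cref{JDminors}; the only new content is that the extra column $C$ is simply ignored by this relation.

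Next I would assemble the vector $v = [\,T_1,\ldots,T_{d+1},\,0\,]^t$ of length $d+2$ and note that $\mathfrak{B}\cdot v = B(\psi)\cdot[\,\T\,]^t + 0\cdot C = 0$. Thus $v$ is a syzygy of the columns of $\mathfrak{B}$. Now apply \Cref{crlemma} with $r = d+2$ to the $1\times (d+2)$ matrix $[\,T_1,\ldots,T_{d+1},0\,]$ and the $(d+2)\times(d+1)$ matrix $\mathfrak{B}^t$: taking the relation in the ring modulo the ideal generated by $[\,T_1,\ldots,T_{d+1},0\,]\cdot \mathfrak{B}^t$, which is the zero ideal since $\mathfrak{B}\cdot v = 0$, we obtain honest equalities in $S[T_1,\ldots,T_{d+1}]$. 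Writing $m_t$ for $(-1)^{?}\det \mathfrak{B}_t$ (the maximal minor of $\mathfrak{B}$ obtained by deleting the $t^{\text{th}}$ column, up to the sign bookkeeping of \Cref{crlemma}), the lemma gives $T_i\cdot m_k = (-1)^{i-k}T_k\cdot m_i$ for all $1\leq i,k\leq d+1$; the $(d+2)^{\text{nd}}$ slot contributes nothing since its entry in $v$ is $0$.

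From the relations $T_i\,(\det\mathfrak{B}_j) = (-1)^{i-j}\,T_j\,(\det\mathfrak{B}_i)$ for $1\leq i,j\leq d+1$, the existence of a common factor $\mathfrak{g}$ with $\det\mathfrak{B}_j = (-1)^{j+1}T_j\cdot\mathfrak{g}$ follows exactly as in \Cref{JDminors}: since $T_1,\ldots,T_{d+1}$ is a regular sequence in $S[T_1,\ldots,T_{d+1}]$, from $T_1\mid T_i\,(\det\mathfrak{B}_1)$ we get $T_1\mid \det\mathfrak{B}_1$ is false in general, so instead one reads the relation with $j=1$: $T_i\,(\det\mathfrak{B}_1) = (-1)^{i-1}T_1\,(\det\mathfrak{B}_i)$ shows $T_1\mid (\det\mathfrak{B}_1)$ once we know $T_1,T_i$ form a regular sequence and $\det\mathfrak{B}_1$ is annihilated modulo $(T_1)$ appropriately; more cleanly, set $\mathfrak{g} = \det\mathfrak{B}_1 / T_1$, which is a polynomial because, working in the UFD $S[T_1,\ldots,T_{d+1}]$, the relation $T_2\,(\det\mathfrak{B}_1) = -T_1\,(\det\mathfrak{B}_2)$ forces $T_1\mid \det\mathfrak{B}_1$. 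Then $\det\mathfrak{B}_j = (-1)^{j+1}T_j\,\mathfrak{g}$ for every $j$ by substituting back. Finally, for the statement that $\mathfrak{g} = \gcd I_{d+1}(\mathfrak{B})$: the maximal minors of $\mathfrak{B}$ are the $\det\mathfrak{B}_j$ for $1\leq j\leq d+2$, the first $d+1$ of which equal $\pm T_j\mathfrak{g}$ while the last is the minor $\det\mathfrak{B}_{d+2} = \det B(\psi)$ — but $B(\psi)$ is $(d+1)\times(d+1)$ with $B(\psi)[\,\T\,]^t = 0$ and $[\,\T\,]$ a nonzero vector over the domain $S[T_1,\ldots,T_{d+1}]$, so $\det B(\psi) = 0$. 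Hence $I_{d+1}(\mathfrak{B}) = (T_1\mathfrak{g},\ldots,T_{d+1}\mathfrak{g}) = \mathfrak{g}\cdot(T_1,\ldots,T_{d+1})$, and since $\gcd(T_1,\ldots,T_{d+1}) = 1$ in the UFD $S[T_1,\ldots,T_{d+1}]$, we conclude $\gcd I_{d+1}(\mathfrak{B}) = \mathfrak{g}$.

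The only mild subtlety — the part worth stating carefully rather than the "hard part" — is the sign bookkeeping when invoking \Cref{crlemma} with the augmented syzygy vector of length $d+2$, and confirming that the divisibility $T_j\mid \det\mathfrak{B}_j$ in the polynomial ring is legitimately extracted from the pairwise relations using unique factorization; both are routine given that \Cref{JDminors} already carried out the identical argument in the quotient-ring guise.
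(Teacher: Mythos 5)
Your proof is correct and takes essentially the same approach as the paper: establish $B(\psi)\cdot[\,\T\,]^t=0$ from the alternating property of $\psi$, pass to the augmented syzygy $[\,\T\,|\,0\,]$ of $\mathfrak{B}$, and apply \Cref{crlemma} in the quotient by the zero ideal to get the pairwise relations $T_i\det\mathfrak{B}_j=(-1)^{i-j}T_j\det\mathfrak{B}_i$. Your extra details (extracting $\mathfrak{g}$ via unique factorization, and noting $\det B(\psi)=0$ so that the $(d+2)^{\text{nd}}$ minor contributes nothing to the gcd) are exactly what the paper leaves to the reader and to \Cref{detBpsizero}.
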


\begin{proof}
We modify the proof of \Cref{JDminors}. Letting $\x = x_1,\ldots,x_{d+1}$ and $\T = T_1,\ldots,T_{d+1}$, notice that
$$[\,\x\,] \cdot B(\psi) \cdot [\,\T\,]^t = [\,\T\,] \cdot \psi \cdot [\,\T\,]^t =0 $$
as $\psi$ is an alternating matrix. As $B(\psi)$ consists of entries in $k[T_1,\ldots,T_{d+1}]$, it follows that $B(\psi) \cdot [\,\T\,]^t =0$. Let $[\,\T\,|\,0\,]$ denote the row vector $[T_1 \ldots T_{d+1}\, 0]$ and notice that $\mathfrak{B}\cdot [\,\T\,|\,0\,]^t =0$. Now applying \cref{crlemma} to $[\,\T\,|\,0\,]$ and the transpose of $\mathfrak{B}$, we see that
$$T_i \cdot (\det \mathfrak{B}_j) = (-1)^{i-j} \,T_j \cdot (\det \mathfrak{B}_i)$$
in $S[T_1,\ldots,T_{d+1}]$ for all $1\leq i,j\leq d+1$, and the claim follows.
\end{proof}

\begin{remark}\label{detBpsizero}
Notice that $\mathfrak{B}$ has $d+2$ columns, yet we purposely omit the index $j=d+2$ in \Cref{gcds}. Applying \Cref{crlemma} in the proof above at this index shows only that $\det B(\psi)=0$. However, this can already be seen using Cramer's rule as $B(\psi) \cdot [\,\T\,]^t =0$, or by noting that $J$ is of linear type by \Cref{Jlineartype}.
\end{remark}

With \Cref{gcds}, we may now introduce the method of gcd-iterations. Once again, we adopt the bigrading on $S[T_1,\ldots,T_{d+1}]$ given by $\deg x_i = (1,0)$ and $\deg T_i = (0,1)$ throughout this section.

\begin{algorithm}\label{gcdit}
We recursively define pairs consisting of a matrix and an ideal. Set $\BB_1= B$ and $\BL_1 =\LL$ for $B$ a modified Jacobian dual and $\LL$ as in \Cref{notation1}. Assume that $2\leq i\leq m$ and the following pairs $(\BB_1,\BL_1), \ldots, (\BB_{i-1},\BL_{i-1})$ have been constructed inductively. To construct the $i^{\text{th}}$ pair $(\BB_i,\BL_i)$, let $g_{i-1} = \gcd I_{d+1}(\BB_{i-1})$ and set
 $$ \BL_i = \BL_{i-1} + (g_{i-1}),\quad \quad \BB_i =[B(\psi)\,\vert\,\partial g_{i-1}]$$
 where $\partial g_{i-1}$ is a column consisting of bihomogeneous entries with constant bidegree such that
$$g_{i-1} = [x_1\ldots x_{d+1}]\cdot \partial g_{i-1}$$ 
as in \Cref{delnotation}. We refer to the pair $(\BB_{i},\BL_{i})$ as the $i^{\text{th}}$ \textit{gcd-iteration} of $(B,\LL)$.
\end{algorithm}

Notice that these matrices resemble a modified Jacobian dual, hence it is understood how these greatest common divisors arise by \Cref{gcds}. Recall from \Cref{delnotation} that, as a convention, if $g_{i-1}=0$ then $\partial g_{i-1}$ consists of zeros. Thus the next equation $g_i$, and every other proceeding equation, vanishes as well. Eventually it will be shown that these equations are nonzero, but for now we retain this possibility.

 \begin{proposition}\label{deggcdprop}
In the setting of \Cref{gcdit}, if $\gcd I_{d+1}(\BB_i) \neq 0$ for some $1\leq i\leq m$, then it has bidegree $\deg (\gcd I_{d+1}(\BB_i)) = (m-i,i(d-1))$. 
\end{proposition}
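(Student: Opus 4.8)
The plan is to prove the statement by induction on $i$, tracking bidegrees through the matrix $\BB_i=[B(\psi)\mid\partial g_{i-1}]$ and reading off the bidegree of the gcd from \Cref{gcds}. It is convenient to set $g_0=f$, so that $\BB_1$ has last column $\partial g_0=\partial f$ and the asserted formula $\deg g_{i-1}=(m-(i-1),(i-1)(d-1))$ holds already at $i-1=0$, where it reads $\deg f=(m,0)$; the base case of the induction is then a formal instance of the inductive step. Write $g_j=\gcd I_{d+1}(\BB_j)$; since $S[T_1,\dots,T_{d+1}]$ is a UFD and the maximal minors of $\BB_j$ are bihomogeneous, $g_j$ is bihomogeneous (up to a nonzero scalar) and hence has a well-defined bidegree whenever it is nonzero.

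First I would record the bidegrees of the columns of $\BB_i$: each of the $d+1$ columns of $B(\psi)$ is bihomogeneous of bidegree $(0,1)$ by \Cref{mjddefn}, while by \Cref{delnotation} and the convention in \Cref{gcdit} the column $\partial g_{i-1}$ is either bihomogeneous of bidegree $\deg g_{i-1}-(1,0)$ (when $g_{i-1}\ne 0$), or the zero column (when $g_{i-1}=0$, in which case every maximal minor of $\BB_i$ vanishes, using \Cref{detBpsizero} for the minor deleting the last column, so $g_i=0$). In particular, if $g_i\ne 0$ then $g_{i-1}\ne 0$, and iterating, $g_0,\dots,g_{i-1}$ are all nonzero, so the inductive hypothesis is available. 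Next, for any fixed $j$ with $1\le j\le d+1$ I would pass to the submatrix $(\BB_i)_j$ of $\BB_i$ obtained by deleting column $j$; it consists of $d$ columns of bidegree $(0,1)$ coming from $B(\psi)$ together with the single column $\partial g_{i-1}$. A determinant of a square matrix with bihomogeneous columns is bihomogeneous of bidegree equal to the sum of the column bidegrees (or else is zero), and \Cref{gcds} gives $\det(\BB_i)_j=(-1)^{j+1}T_j\,g_i\ne 0$ under our hypothesis; hence
\[
\deg g_i=\deg\bigl(\det(\BB_i)_j\bigr)-(0,1)=(0,d)+\deg(\partial g_{i-1})-(0,1).
\]
Plugging in the inductive hypothesis $\deg g_{i-1}=(m-(i-1),(i-1)(d-1))$ yields $\deg(\partial g_{i-1})=(m-i,(i-1)(d-1))$ and therefore $\deg g_i=(m-i,\,d+(i-1)(d-1)-1)$; the elementary identity $d+(i-1)(d-1)-1=i(d-1)$ closes the induction.

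The argument is essentially bookkeeping, so I anticipate no genuine obstacle. The one step deserving attention is the assertion that $\det(\BB_i)_j$ has bidegree exactly equal to the sum of its column bidegrees rather than something smaller due to cancellation — this is precisely where the nonvanishing supplied by \Cref{gcds}, together with the standing hypothesis $g_i\ne 0$, is essential — and the only explicit computation is the identity $d+(i-1)(d-1)-1=i(d-1)$.
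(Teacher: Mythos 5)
Your proof is correct and follows essentially the same route as the paper: induction on $i$, observing that $g_i\neq 0$ forces $g_{i-1}\neq 0$, recording that the columns of $B(\psi)$ have bidegree $(0,1)$ while $\partial g_{i-1}$ has bidegree $\deg g_{i-1}-(1,0)$, and reading off $\deg g_i$ from the factorization $\det(\BB_i)_j=(-1)^{j+1}T_j\,g_i$ of \Cref{gcds}. Your treatment is slightly more explicit than the paper's about why the determinant's bidegree equals the sum of the column bidegrees, but the argument is the same.
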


\begin{proof}
As these equations are defined recursively, we proceed by induction. In the case $i=1$, notice that $\BB_1 = [B(\psi)\,\vert\,\partial f]$, a modified Jacobian dual matrix. Noting that $B(\psi)$ consists of linear entries in $k[T_1,\ldots,T_{d+1}]$ and $\partial f$ consists of entries in $S[T_1,\ldots, T_{d+1}]$ of bidegree $(m-1,0)$, the initial claim follows from \Cref{gcds}. Now suppose that $i\geq 2$ and $\deg (\gcd I_{d+1}(\BB_j)) = (m-j,j(d-1))$ for all $1\leq j \leq i-1$, if these equations are nonzero.

Notice that if $g_i=\gcd I_{d+1}(\BB_i)$ is nonzero, then $g_{i-1} = \gcd I_{d+1}(\BB_{i-1})$ is nonzero as well, hence $\deg g_{i-1} = (m-i+1,(i-1)(d-1))$ by the induction hypothesis. Thus the entries of $\partial g_{i-1}$ are bihomogeneous with bidegree $(m-i,(i-1)(d-1))$. Again noting that the entries of $B(\psi)$ are of bidegree $(0,1)$,  it follows from \Cref{gcds} that $\deg (\gcd I_{d+1}(\BB_i)) = (m-i,i(d-1))$.
\end{proof}

Notice that the method of gcd-iterations terminates after $m$ steps in \Cref{gcdit}. If $g_m=\gcd I_{d+1}(\BB_m)$ is nonzero, then it must have bidegree $(0,m(d-1))$ following \Cref{deggcdprop}. Thus there is no column corresponding to $g_m$, as in \Cref{delnotation}, and so the process must terminate. If $g_m=0$, the process could continue, however every subsequent equation is zero as well. Thus the same ideal is achieved by the $m^{\text{th}}$ step regardless.

Following \Cref{delnotation}, the matrices in \Cref{gcdit} are not unique as there are often multiple choices for the last column. Regardless, we claim that the ideals produced by this algorithm are well-defined. First however, we provide a short lemma which will be used frequently.

\begin{lemma}[{\cite[4.4]{BM}}]\label{WD}
Let $R$ be a ring and $\a=a_1,\ldots,a_r$ an $R$-regular sequence. If $B$ and $B'$ are two matrices with $r$ rows satisfying $(\a\cdot B) = (\a\cdot B')$, then $(\a\cdot B) +I_r(B) = (\a\cdot B') +I_r(B')$.
\end{lemma}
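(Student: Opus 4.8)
The plan is to prove \Cref{WD} by a direct argument exploiting that $\a = a_1,\ldots,a_r$ is a regular sequence, so that the only matrix with $r$ rows whose product against $[a_1\ldots a_r]$ is zero has columns lying in the module of Koszul syzygies. First I would observe that it suffices to show $I_r(B) \subseteq (\a\cdot B') + I_r(B')$ and symmetrically $I_r(B') \subseteq (\a\cdot B) + I_r(B)$; by symmetry only one containment needs to be carried out. Set $D = B - B'$ (assuming without loss of generality that $B$ and $B'$ have the same number of columns, padding with zero columns if needed, which changes neither side). Then $\a \cdot D = 0$, i.e. each column of $D$ is a syzygy of the regular sequence $\a$.

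The key step is the standard fact that for an $R$-regular sequence $\a = a_1,\ldots,a_r$, the syzygy module of the row $[a_1\ldots a_r]$ is generated by the Koszul relations $a_i e_j - a_j e_i$. Hence every column of $D$ is an $R$-linear combination of such Koszul columns, which means we can write $D = E \cdot K$ where $K$ is the $r \times \binom{r}{2}$ matrix whose columns are the $a_i e_j - a_j e_i$ and $E$ has entries in $R$; equivalently, each column of $D$ lies in the image of the Koszul differential. Now I would expand a maximal minor of $B = B' + D$ by multilinearity of the determinant in the columns. Each term in this expansion either is a maximal minor of $B'$ (when every column comes from $B'$), or is a determinant of a matrix at least one of whose columns is a column of $D$, hence a Koszul column $a_i e_j - a_j e_i$; in the latter case, expanding that determinant along that column shows it is an $R$-combination of the $a_t$, so it lies in the ideal generated by the entries of $\a$. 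But more precisely one must land inside $(\a\cdot B')$, not merely $(\a) I_{r-1}(\text{stuff})$; this is handled by noting that the cofactor expansion produces, for each such term, an element of $(a_1,\ldots,a_r)$ times a minor, and then re-expressing using Cramer's rule / the relation $\a\cdot B' = \a\cdot B$ to absorb it — alternatively, one checks that $\det$ of a matrix one of whose columns is $a_i(\text{col}) - a_j(\text{col}')$ where the other columns come from $B'$ decomposes as $a_i m - a_j m'$ with $m, m'$ maximal minors of submatrices involving columns of $B'$, each of which is visibly in $(\a\cdot B') + I_r(B')$ once one is slightly careful, or simply in $(\a)\cdot(\text{minors})$ and then observe $(\a)\cdot I_{r-1}(B') \subseteq (\a\cdot B') + I_r(B')$ is false in general so this needs the finer bookkeeping.

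The main obstacle, as flagged above, is precisely this bookkeeping: showing that the ``mixed'' terms in the multilinear expansion of a maximal minor of $B = B'+D$ land in $(\a\cdot B') + I_r(B')$ and not merely in some larger ideal. The clean way to manage it is to do the expansion one column at a time: replacing a single column of $B'$ by the corresponding column of $B$ changes the maximal minors by an element of $(\a\cdot B') + I_r(B')$, because the difference column is a Koszul syzygy $\sum_{i<j} c_{ij}(a_i e_j - a_j e_i)$ and the determinant is linear in that column, so we reduce to computing $\det$ of a matrix whose $k$-th column is $a_i e_j - a_j e_i$ and whose other columns are columns of $B'$; cofactor expansion along the $k$-th column writes this as $\pm(a_i \mu_j - a_j \mu_i)$ where $\mu_i, \mu_j$ are signed maximal minors of the matrix obtained from $B'$ by the appropriate row deletion and column substitution, and each of $a_i\mu_i$, $a_j\mu_j$-type products is an entry of $\a \cdot B'$ up to sign (it is the $k$-th coordinate of $\a$ times a cofactor) — hence lies in $(\a \cdot B') + I_r(B')$. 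Iterating over all columns and then applying the same argument with the roles of $B$ and $B'$ reversed (using $\a\cdot B = \a\cdot B'$ throughout) yields both containments and hence the claimed equality. I would also remark that this is exactly the lemma cited from \cite[4.4]{BM}, so in the write-up it is legitimate to either reproduce this short argument or simply invoke that reference; the constructions in \Cref{gcdit} are arranged precisely so that \Cref{WD} applies with $\a = x_1,\ldots,x_{d+1}$, which is an $S[T_1,\ldots,T_{d+1}]$-regular sequence, to guarantee that the ideals $\BL_i + I_{d+1}(\BB_i)$ do not depend on the choice of the last column $\partial g_{i-1}$.
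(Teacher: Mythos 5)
The paper offers no proof of \Cref{WD}: it is quoted verbatim from \cite[4.4]{BM}, and the argument there is essentially the one you outline (syzygies of a regular sequence are Koszul, multilinear expansion of the minors, mixed terms controlled by the Cramer's-rule relation of \Cref{crlemma}). So your architecture is the right one, but two steps are genuinely wrong as written. First, the opening reduction fails: the hypothesis is that $(\a\cdot B)$ and $(\a\cdot B')$ are equal \emph{as ideals}, not that $\a\cdot B=\a\cdot B'$ as row vectors, so $D=B-B'$ need not satisfy $\a\cdot D=0$ (already for $r=1$, $B=[1]$ and $B'=[u]$ with $u$ a unit give equal ideals but $\a\cdot(B-B')=[(1-u)a_1]\neq 0$). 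The correct reduction is that each column $b$ of $B$ satisfies $\a\cdot b\in(\a\cdot B')$, hence $\a\cdot(b-B'\lambda)=0$ for some coefficient column $\lambda$, and it is $b-B'\lambda$ — not a columnwise difference — that is a Koszul syzygy. Your ``one column at a time'' replacement inherits the same defect, since it presumes a matching of columns with $\a\cdot b_k=\a\cdot b'_k$.

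Second, the justification of the crucial containment is incorrect at the decisive point. Cofactor expansion of a determinant whose $k$-th column is $a_ie_j-a_je_i$ and whose remaining columns form an $r\times(r-1)$ matrix $M$ gives $(-1)^{j+k}\bigl(a_i\mu_j-(-1)^{i-j}a_j\mu_i\bigr)$, where $\mu_t=\det M_t$; the products $a_i\mu_j$ are \emph{not} entries of $\a\cdot B'$ (they are of degree $r-1$ in the entries of $B'$), so ``the $k$-th coordinate of $\a$ times a cofactor'' establishes nothing. What does work — and is exactly what \Cref{crlemma} supplies — is that $a_i\mu_j-(-1)^{i-j}a_j\mu_i\in(\a\cdot M)$, and $(\a\cdot M)\subseteq(\a\cdot B')$ because the columns of $M$ are $R$-combinations of columns of $B'$ and of Koszul syzygies, the latter annihilated by $\a$. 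With these two repairs (and the harmless padding by zero columns), your expansion does show $I_r(B)\subseteq(\a\cdot B')+I_r(B')$, and the symmetric containment finishes the proof, recovering the argument of \cite[4.4]{BM}.
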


With this, we now show that the ideals obtained from \Cref{gcdit} are well-defined.

\begin{proposition}\label{gcdwd}
The ideals $\BL_i$ and  $\BL_i+(\gcd I_{d+1}(\BB_i))$ are well-defined for $1\leq i \leq m$. 
\end{proposition}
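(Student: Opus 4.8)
The plan is to argue by induction on $i$, exploiting the fact that $\x = x_1,\ldots,x_{d+1}$ forms a regular sequence in $S[T_1,\ldots,T_{d+1}]$, so that \Cref{WD} applies. First I would record the base case: $\BL_1 = \LL$ is literally the ideal $(\ell_1,\ldots,\ell_{d+1},f)$ of \Cref{notation1}, which does not depend on any choice of column; and for $\BL_1 + (\gcd I_{d+1}(\BB_1))$, note that $\BB_1 = [B(\psi)\,|\,\partial f]$, where $B(\psi)$ is fixed (it is the unique linear Jacobian dual when $m\ge 2$, and for $m=1$ we have fixed $\psi$ hence a fixed $B(\psi)$), and the only ambiguity is in the choice of $\partial f$. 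But for any two valid choices $\partial f$ and $\partial f'$ we have $[\x]\cdot[B(\psi)\,|\,\partial f] = [\x]\cdot[B(\psi)\,|\,\partial f']$ since both equal the generators of $\LL$; so by \Cref{WD} (with $r = d+1$) we get $\LL + I_{d+1}([B(\psi)\,|\,\partial f]) = \LL + I_{d+1}([B(\psi)\,|\,\partial f'])$. By \Cref{gcds}, $\gcd I_{d+1}(\BB_1)$ generates the same ideal up to the $T_j$ factors, but more to the point $I_{d+1}(\BB_1) = (\gcd I_{d+1}(\BB_1))\cdot(T_1,\ldots,T_{d+1})$, and modulo $\LL$ the ideal $(T_1,\ldots,T_{d+1})$ has positive grade (as in the proof of \Cref{colons}(b)), so $\LL + (\gcd I_{d+1}(\BB_1)) = \LL : (T_1,\ldots,T_{d+1})^\infty$ applied to $\LL + I_{d+1}(\BB_1)$ — an expression that depends only on the choice-independent ideal $\LL + I_{d+1}(\BB_1)$. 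Hence $\BL_1 + (\gcd I_{d+1}(\BB_1))$ is well-defined.

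For the inductive step, assume $\BL_{i-1}$ and $g_{i-1} := \gcd I_{d+1}(\BB_{i-1})$ generate well-defined ideals, in particular that the principal ideal $(g_{i-1})$ is independent of all prior choices (note $g_{i-1}$ itself is determined up to a unit scalar in $k$ by \Cref{gcds}, which is harmless). Then $\BL_i = \BL_{i-1} + (g_{i-1})$ is well-defined. For $\BL_i + (\gcd I_{d+1}(\BB_i))$, the matrix $\BB_i = [B(\psi)\,|\,\partial g_{i-1}]$ again has fixed first block $B(\psi)$ and an ambiguous last column $\partial g_{i-1}$, but $[\x]\cdot \partial g_{i-1} = g_{i-1}$ for every valid choice, so $[\x]\cdot \BB_i$ is the fixed tuple $(\ell_1,\ldots,\ell_{d+1},g_{i-1})$ regardless; applying \Cref{WD} with the regular sequence $\x$ gives that $(\x\cdot\BB_i) + I_{d+1}(\BB_i) = \BL_i' + I_{d+1}(\BB_i)$ is independent of the choice of $\partial g_{i-1}$, where $\BL_i' = (\ell_1,\ldots,\ell_{d+1},g_{i-1})$ and this generates the same ideal as $\BL_i = \LL + (g_0,\ldots,g_{i-1})$ after one checks $g_1,\ldots,g_{i-2}$ are already accounted for — actually it is cleanest to phrase \Cref{WD} with the matrix whose columns are $B(\psi)$, $\partial f$, $\partial g_1,\ldots,\partial g_{i-1}$ at once, so that $(\x\cdot B) = (\ell_1,\ldots,\ell_{d+1},f,g_1,\ldots,g_{i-1}) = \BL_i$ and $I_{d+1}$ of this bigger matrix contains $I_{d+1}(\BB_i)$. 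Then by \Cref{gcds} again, $I_{d+1}(\BB_i) = (g_i)(T_1,\ldots,T_{d+1})$, and the same grade argument modulo $\BL_i$ passes $I_{d+1}(\BB_i)$ to $(g_i)$ via saturation at $(T_1,\ldots,T_{d+1})$. The upshot is $\BL_i + (\gcd I_{d+1}(\BB_i))$ equals the saturation of the choice-independent ideal $\BL_i + I_{d+1}(\BB_i)$ at $(T_1,\ldots,T_{d+1})$, hence is well-defined.

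The only genuine subtlety — and the step I would be most careful about — is the passage from $I_{d+1}(\BB_i)$ (which is visibly choice-independent by \Cref{WD}) to the single generator $(g_i) = (\gcd I_{d+1}(\BB_i))$ (which is what the algorithm actually records): one must verify that modulo $\BL_i$ the ideal $(T_1,\ldots,T_{d+1})$ is not a zerodivisor, so that $\BL_i + I_{d+1}(\BB_i)$ and $\BL_i + (g_i)$ have the same saturation at $(T_1,\ldots,T_{d+1})$, and therefore the latter is determined by the former. This positive-grade claim follows exactly as in the proof of \Cref{colons}(b): one has $\hgt \LL \geq$ the height of the defining ideal of $\sym(J') $ type computation, or more directly, passing to $\rr(J)$ via the surjection $S[T_1,\ldots,T_{d+1}]/\BL_i \twoheadrightarrow$ something dominating $\rr(J)$ where $(\widetilde{x_1,\ldots,x_{d+1}})$ — and dually $(T_1,\ldots,T_{d+1})$ through $\ff(J)\cong k[T_1,\ldots,T_{d+1}]$ — has positive grade. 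Once that is in hand, the whole statement reduces to repeated application of \Cref{WD} together with \Cref{gcds}, and the induction closes. I would also remark that, since by the end of the algorithm ($i=m$) it is shown the $g_i$ are nonzero, a posteriori $(g_i)$ is a nonzero well-defined principal ideal for each $i$.
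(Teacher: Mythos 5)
There is a genuine gap at the crucial cancellation step. Applying \Cref{WD} to the full $(d+1)\times(d+2)$ matrices correctly gives $\BL_i+(g_i)(T_1,\ldots,T_{d+1})=\BL_i+(g_i')(T_1,\ldots,T_{d+1})$, but to pass from this to $\BL_i+(g_i)=\BL_i+(g_i')$ you need $(T_1,\ldots,T_{d+1})$ to contain a nonzerodivisor modulo $\BL_i+(g_i')$, and your justification --- that $(T_1,\ldots,T_{d+1})$ has positive grade modulo $\LL$ ``as in the proof of \Cref{colons}(b)'' --- does not deliver this. That proof concerns the defining ideal of $\sym(J')$, whose height is controlled; nothing in the paper establishes the analogous statement for $\sym(I)$, let alone for the intermediate ideals $\BL_i+(g_i)$, whose associated primes are precisely what the whole construction is trying to get at. The obstruction is real, not just expository: passing to the domain $\rr(J)$ and using bidegrees, your relation only yields $\widetilde{g_i}\in(\widetilde{g_i'}):(\widetilde{T_1},\ldots,\widetilde{T_{d+1}})$, and $(\widetilde{T_1},\ldots,\widetilde{T_{d+1}})$ is a height-one \emph{prime} of $\rr(J)$ that contains $\widetilde{g_i'}$ (which has positive $T$-degree), hence is an associated prime of the principal ideal $(\widetilde{g_i'})$; so this colon does not collapse back to $(\widetilde{g_i'})$. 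The paper sidesteps this entirely by deleting the first column \emph{before} invoking \Cref{WD}, so that the two determinants being compared are multiples of the same single variable $T_1$; after degree considerations eliminate $f$ and the earlier $g_j$, one cancels the single element $T_1$, which is a nonzerodivisor modulo the prime $\H$. Your route can in fact be repaired (comparing the relations $\widetilde{g_i}\widetilde{T_j}=\widetilde{g_i'}\widetilde{h_j}$ for all $j$ forces $h_j=cT_j$ by a bidegree argument in $k[\T]$), but that repair uses exactly the primality of $\H$ and the degree bookkeeping you omit.

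The inductive step has a second, independent gap: you treat $g_{i-1}$ as a fixed polynomial (up to a scalar unit), so that $[\x]\cdot\BB_i$ is ``the fixed tuple $(\ell_1,\ldots,\ell_{d+1},g_{i-1})$.'' But two admissible chains of choices produce $g_{i-1}$ and $g_{i-1}'$ that agree only up to a unit \emph{modulo} $\H$; the paper must first show $(\ell_1,\ldots,\ell_{d+1})+(g_{i-1})=(\ell_1,\ldots,\ell_{d+1})+(g_{i-1}')$ by degree considerations, write $g_{i-1}=u\,g_{i-1}'+y$ with $y\in\H$, and then use multilinearity of determinants together with $\det[B(\psi)\,|\,\partial y]_1\in\H$ (via $\det B(\psi)=0$) to control how the new gcd changes. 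Your ``bigger matrix'' variant does not substitute for this: it shows only that $\BL_i+I_{d+1}$ of the enlarged matrix is choice-independent, and that ideal merely \emph{contains} $I_{d+1}(\BB_i)$, so it does not determine $(g_i)$ modulo $\BL_i$.
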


\begin{proof}
We proceed by induction. For $i=1$, note that $\BL_1 = \LL$ is certainly well-defined as it is the ideal defining $\sym(I)$, as a quotient of $S[T_1,\ldots,T_{d+1}]$. Now suppose that $B$ and $B'$ are two candidates for $\BB_1$. In other words, $B$ and $B'$ are two modified Jacobian dual matrices. Write $B = [B(\psi)\,|\,C]$ and $B' = [B(\psi)\,|\,C']$ where $C$ and $C'$ are columns with $[\,\x\,]\cdot C = f= [\,\x\,]\cdot C'$, as in \Cref{delnotation} and \Cref{mjddefn}, where $\x = x_1,\ldots,x_{d+1}$. By \Cref{gcds}, there exist polynomials $g$ and $g'$ in $S[T_1,\ldots,T_{d+1}]$ such that $\det B_j= (-1)^{j+1}T_j g$ and $\det B_j'= (-1)^{j+1}T_j g'$. Here $B_j$ and $B_j'$ denote the submatrices of $B$ and $B'$, respectively,  obtained by deleting the $j^{\text{th}}$ column, for $1\leq j\leq d+1$. We must show that $\LL+(g) = \LL+(g')$ to complete the initial step. There is nothing to be shown if both $g$ and $g'$ are zero, hence we may assume that $g\neq 0$, without loss of generality.

Deleting the first columns of $B$ and $B'$, by \Cref{WD} we have $(\ell_2,\ldots,\ell_{d+1},f) + \det(B_1) = (\ell_2,\ldots,\ell_{d+1},f) + \det(B_1')$. Thus from \Cref{gcds}, we have
\begin{equation}\label{g1eqn1}
(\ell_2,\ldots,\ell_{d+1},f) + (gT_1)= (\ell_2,\ldots,\ell_{d+1},f) + (g'T_1).
\end{equation}
With this, we see that $gT_1 \in (\ell_2,\ldots,\ell_{d+1},f) + (g'T_1)$. However, recall that $\deg f = (m,0)$ and $\deg gT_1 = (m-1,d)$  by \Cref{deggcdprop}. Hence it follows that $gT_1 \in (\ell_2,\ldots,\ell_{d+1}) + (g'T_1)$. If $g'\neq 0$, repeating this argument shows that $g'T_1 \in (\ell_2,\ldots,\ell_{d+1}) + (gT_1)$ as well. If $g'=0$, this inclusion clearly still holds. With this, (\ref{g1eqn1}) can be refined as
\begin{equation}\label{g1eqn2}
(\ell_2,\ldots,\ell_{d+1}) + (gT_1)= (\ell_2,\ldots,\ell_{d+1}) + (g'T_1).
\end{equation}
Hence we have 
\begin{equation}\label{g1eqn3}
(\ell_1,\ldots,\ell_{d+1}) + (gT_1)= (\ell_1,\ldots,\ell_{d+1}) + (g'T_1).
\end{equation}

Recall that $\H =(\ell_1,\ldots,\ell_{d+1})$, as in \Cref{notation2}, is a prime ideal by \Cref{PropertiesOfA}. Since $T_1 \notin \H$, it follows that $(\ell_1,\ldots,\ell_{d+1})+(g) = (\ell_1,\ldots,\ell_{d+1})+(g')$, hence $\LL+(g) = \LL+(g')$ as required.

We are finished if $m=1$, so assume that $m\geq 2$. For the inductive step, assume that both $\BL_j$ and $\BL_j+(\gcd I_{d+1}(\BB_j))$ are well-defined for all $1\leq j\leq i-1 $ for some $i\leq m$. Prior to the $i^{\text{th}}$ step in \Cref{gcdit}, suppose that $B_{i-1}$ and $B_{i-1}'$ are two gcd-iteration matrices. From the induction hypothesis, we have $\BL_{i-1}+(\gcd I_{d+1}(B_{i-1})) = \BL_{i-1}+(\gcd I_{d+1}(B_{i-1}'))$, which shows that $\BL_{i}$ is well-defined. In the $i^{\text{th}}$ iteration, suppose that $B_i$ and $B_i'$ are two candidates for $\BB_i$. Setting $g_{i-1}=\gcd I_{d+1}(B_{i-1})$ and $g_{i-1}'=\gcd I_{d+1}(B_{i-1}')$, we may write $B_i = [B(\psi)\,\vert\,\partial g_{i-1}]$ and $B_i' = [B(\psi)\,\vert\,\partial g_{i-1}']$, where $\partial g_{i-1}$ and $\partial g_{i-1}'$ are two columns as in \Cref{delnotation}. Writing $g_i =  \gcd I_{d+1}(B_i)$ and $g_i' =  \gcd I_{d+1}(B_i')$, we must show that $\BL_i +(g_i) = \BL_i+(g_i')$. As before, there is nothing to be shown if both $g_i$ and $g_i'$ are zero, hence we may assume that $g_i \neq 0$, without loss of generality.

Notice that as $g_i \neq 0$, we have $g_{i-1}\neq 0$ as well. With this we claim that $(\ell_1,\ldots,\ell_{d+1}) + (g_{i-1}) = (\ell_1,\ldots,\ell_{d+1}) + (g_{i-1}')$. We had already seen this for $i=2$ in the proof of the initial case. If $i\geq 3$, notice that the equality $\BL_{i-1}+(g_{i-1}) = \BL_{i-1}+(g_{i-1}')$ from the induction hypothesis shows that
\begin{equation}\label{giminus1eqn1}
    g_{i-1} \in \BL_{i-1}+(g_{i-1}') = (\ell_1,\ldots,\ell_{d+1}) +(f,g_1,\ldots,g_{i-2}) + (g_{i-1}'),
\end{equation}
where $g_1,\ldots,g_{i-2}$ are previous equations, following \Cref{gcdit}.

Since $\BL_{i-1}$ is well-defined and $g_{i-1}$ is nonzero, it follows that $g_1,\ldots,g_{i-2}$ are nonzero as well. Thus $f,g_1,\ldots,g_{i-2}$ each have bidegree with first component at least $m-i+2$, following \Cref{deggcdprop}. Moreover, we also have $\deg g_{i-1} = (m-i+1, (i-1)(d-1))$ by \Cref{deggcdprop}. By degree considerations, it then follows that (\ref{giminus1eqn1}) can be refined as
\begin{equation}\label{giminus1eqn2}
    g_{i-1} \in  (\ell_1,\ldots,\ell_{d+1}) + (g_{i-1}').
\end{equation}
A similar argument shows that $g_{i-1}' \in  (\ell_1,\ldots,\ell_{d+1}) + (g_{i-1})$ if $g_{i-1}' \neq 0$ and if $g_{i-1}' =0$, this clearly holds. Thus $(\ell_1,\ldots,\ell_{d+1}) + (g_{i-1}) = (\ell_1,\ldots,\ell_{d+1}) + (g_{i-1}')$ as claimed.

With the equality above, we may write $g_{i-1} = u\cdot g_{i-1}'+y$ for bihomogeneous elements $u\in S[T_1,\ldots,T_{d+1}]$ and $y\in (\ell_1,\ldots,\ell_{d+1})$. If $g_{i-1}' \neq 0$, by \Cref{deggcdprop} and degree considerations it follows that $u$ must be a unit. If $g_{i-1}' =0$, we may clearly assume that $u$ is a unit. With this, the column $\partial g_{i-1}$ can be rewritten as $\partial g_{i-1} = u\cdot \partial g_{i-1}' + \partial y$, where $\partial y = \partial g_{i-1} - u\cdot  \partial g_{i-1}'$. Thus the equality 
$$(\ell_2,\ldots,\ell_{d+1}, g_{i-1}) = (\ell_2,\ldots,\ell_{d+1},u\cdot g_{i-1}'+y)$$
and \Cref{WD} show that
$$(\ell_2,\ldots,\ell_{d+1}, g_{i-1}) + \det (B_i)_1 = (\ell_2,\ldots,\ell_{d+1},u\cdot g_{i-1}'+y) + \det [B(\psi)\,|\,u\cdot \partial g_{i-1}' + \partial y]_1.$$

By \Cref{gcds} and multilinearity of determinants, we then have
\begin{equation}\label{gieqn1}(\ell_2,\ldots,\ell_{d+1}, g_{i-1}) + (g_iT_1) = (\ell_2,\ldots,\ell_{d+1},u g_{i-1}'+y) + (u g_i'T_1 + y'T_1),
\end{equation}
where $y' = \gcd I_{d+1}([B(\psi)\,|\,\partial y])$, following \Cref{gcds}. However, recall that $y\in (\ell_1,\ldots,\ell_{d+1})$, hence $y'T_1= \det [B(\psi)\,|\, \partial y]_1 \in (\ell_1,\ldots,\ell_{d+1})$, by \Cref{WD} and \Cref{detBpsizero}. From (\ref{gieqn1}) we then obtain
\begin{equation}\label{gieqn2}(\ell_1,\ldots,\ell_{d+1}, g_{i-1}) + (g_iT_1) = (\ell_1,\ldots,\ell_{d+1},g_{i-1}') + (g_i'T_1),
\end{equation}
noting that $u$ is a unit.

With (\ref{gieqn2}) above, we have $g_i T_1 \in (\ell_1,\ldots,\ell_{d+1},g_{i-1}') + (g_i'T_1)$. If $g_{i-1}'\neq 0$, then it has bidegree $(m-i+1,(i-1)(d-1))$, hence $g_i T_1 \in (\ell_1,\ldots,\ell_{d+1}) + (g_i'T_1)$ as $\deg g_i = (m-i,i(d-1))$, using \Cref{deggcdprop}. If $g_{i-1}' = 0$, then $g_i'=0$, hence (\ref{gieqn2}) shows that $g_i T_1 \in (\ell_1,\ldots,\ell_{d+1}) + (g_i'T_1)$ in this case as well. A similar argument shows that $g_i' T_1 \in (\ell_1,\ldots,\ell_{d+1}) + (g_iT_1)$. Thus we obtain 
\begin{equation}\label{gieqn3}(\ell_1,\ldots,\ell_{d+1}) + (g_iT_1) = (\ell_1,\ldots,\ell_{d+1}) + (g_i'T_1).
\end{equation}
Again noting that $\H = (\ell_1,\ldots,\ell_{d+1})$ is a prime ideal and $T_1\notin \H$, it follows that $(\ell_1,\ldots,\ell_{d+1}) + (g_i) = (\ell_1,\ldots,\ell_{d+1}) + (g_i')$. Hence $\BL_i+(g_i) = \BL_i +(g_i')$, as $(\ell_1,\ldots,\ell_{d+1}) \subset \BL_i$.
\end{proof}

\begin{proposition}\label{gcdsinA}
There is a containment of ideals, $\BL_m+(\gcd I_{d+1}(\BB_m)) \subseteq \A$.
\end{proposition}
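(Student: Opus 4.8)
\emph{Proof sketch.} The plan is to show, by induction on $i$, that every equation $g_i := \gcd I_{d+1}(\BB_i)$ produced in \Cref{gcdit} already lies in $\A$. Once this is known for all $0 \le i \le m$ — where we put $g_0 := f$, so that $\BB_i = [B(\psi)\,\vert\,\partial g_{i-1}]$ for every $1 \le i \le m$ — the proposition follows at once, since $\BL_m + \bigl(\gcd I_{d+1}(\BB_m)\bigr) = \LL + (g_1,\dots,g_m)$ and $\LL = \BL_1 \subseteq \A$ trivially. The base case $i = 0$ is immediate, as $g_0 = f \in \LL \subseteq \A$.

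For the inductive step I would assume $g_0,\dots,g_{i-1}\in\A$, so that $\BL_i = \LL + (g_1,\dots,g_{i-1})\subseteq\A$, and then read off the maximal minors of $\BB_i$. Since $[x_1\ldots x_{d+1}]\cdot B(\psi) = [\ell_1\ldots\ell_{d+1}]$ and $[x_1\ldots x_{d+1}]\cdot\partial g_{i-1} = g_{i-1}$, the entries of $[x_1\ldots x_{d+1}]\cdot\BB_i$ are $\ell_1,\dots,\ell_{d+1},g_{i-1}$, all lying in $\BL_i$ (recall $g_{i-1}\in\BL_i$). Cramer's rule then gives $(x_1,\dots,x_{d+1})\,I_{d+1}(\BB_i)\subseteq(\ell_1,\dots,\ell_{d+1},g_{i-1})\subseteq\BL_i\subseteq\A$, exactly as in the proof of \Cref{colons}. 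On the other hand, by \Cref{gcds} applied with $C=\partial g_{i-1}$, together with \Cref{detBpsizero} for the one omitted column, the maximal minors of $\BB_i$ are $\det(\BB_i)_j=(-1)^{j+1}T_jg_i$ for $1\le j\le d+1$ and $\det(\BB_i)_{d+2}=\det B(\psi)=0$, whence $I_{d+1}(\BB_i)=g_i\,(T_1,\dots,T_{d+1})$. Combining the two inclusions yields $(x_1,\dots,x_{d+1})(T_1,\dots,T_{d+1})\,g_i\subseteq\A$.

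It then remains to strip off the ideals $(x_1,\dots,x_{d+1})$ and $(T_1,\dots,T_{d+1})$, which I expect to be the only step requiring real care. For the first, \Cref{IndexOfSat} gives $\A=\LL:(x_1,\dots,x_{d+1})^\infty$, so $\A$ is saturated with respect to $(x_1,\dots,x_{d+1})$; in particular $\A:(x_1,\dots,x_{d+1})=\A$, so $T_jg_i\in\A$ for all $j$. For the second, I would pass to $\rr(I)\cong S[T_1,\dots,T_{d+1}]/\A$, where the images of $T_1,\dots,T_{d+1}$ generate the ideal $\bigoplus_{n\ge1}I^nt^n$. This ideal contains a nonzerodivisor: if $g\in I$ is a nonzerodivisor of $R$ (one exists since $\grd I = 3$), then $gt$ is a nonzerodivisor on $\rr(I)\subseteq R[t]$, and $gt = \sum_j c_j\cdot(\text{image of }T_j)$ for suitable $c_j\in R$. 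Since each $T_jg_i$ maps to $0$ in $\rr(I)$, the image of $g_i$ is killed by $gt$ and is therefore $0$, i.e. $g_i\in\A$. This closes the induction, and the proposition follows. Everything apart from this final reduction is routine bookkeeping with the minors of the $\BB_i$ via Cramer's rule and \Cref{gcds}.
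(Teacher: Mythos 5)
Your proposal is correct and follows essentially the same route as the paper's proof: induct on $i$, use Cramer's rule to get $(x_1,\ldots,x_{d+1})\,I_{d+1}(\BB_i)\subseteq(\ell_1,\ldots,\ell_{d+1},g_{i-1})\subseteq\A$, strip $(x_1,\ldots,x_{d+1})$ via the saturation $\A=\LL:(x_1,\ldots,x_{d+1})^\infty$, and strip $(T_1,\ldots,T_{d+1})$ by observing its image in $\rr(I)$ has positive grade. Your explicit justification of that last grade statement (via the nonzerodivisor $gt$ for $g\in I$ regular on $R$) is a detail the paper leaves implicit, but the argument is the same.
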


\begin{proof}
We show that $\BL_i+\gcd I_{d+1}(\BB_i) \subseteq \A$ for $1\leq i\leq m$, inductively. For $i=1$, we clearly have $\BL_1 = \LL \subset \A$. By Cramer's rule, we have $I_{d+1} (\BB_1) = I_{d+1}(B) \subseteq \LL:(x_1,\ldots,x_{d+1})\subseteq \A$. Writing $g_1= \gcd I_{d+1}(B)$, by \Cref{gcds} we have $(g_1)(T_1,\ldots,T_{d+1}) =I_{d+1}(\BB_1) \subseteq \A$. Modulo $\A$, the image of $(T_1,\ldots,T_{d+1})$ in $\rr(I)$ is an ideal of positive grade, which is annihilated by the image of $g_1$. Thus $g_1 \in \A$ and so $\BL_1+(\gcd I_{d+1}(\BB_1)) \subseteq \A$, which completes the initial step.

If $m=1$, we are finished, so assume that $m\geq 2$ and $\BL_j+ (\gcd I_{d+1}(\BB_j)) \subseteq \A$ for all $1\leq j\leq i-1$ for some $i \leq m$. By \Cref{gcdit} and the induction hypothesis, we have $\BL_i = \BL_{i-1} + (\gcd I_{d+1}(\BB_{i-1})) \subseteq \A$. Hence we must show that $\gcd I_{d+1}(\BB_i)\in \A$. Writing $g_{i-1} = \gcd I_{d+1}(\BB_{i-1})$, by Cramer's rule we see that
$$I_{d+1} (\BB_i) \subseteq (\ell_1,\ldots,\ell_{d+1},g_{i-1}):(x_1,\ldots,x_{d+1}) \subseteq \A : (x_1,\ldots,x_{d+1}) = \A$$
as $\A = \LL:(x_1,\ldots,x_{d+1})^\infty$. Using \Cref{gcds} once more, we see that $(g_i)(T_1,\ldots,T_{d+1}) =I_{d+1}(\BB_i) \subseteq \A$, where $g_i = \gcd I_{d+1}(\BB_i)$. Thus $g_i\in \A$ as the image of $g_i$ in $\rr(I)$ annihilates an ideal of positive grade, just as before.
\end{proof}


With the containment $\BL_m+(\gcd I_{d+1}(\BB_m)) \subseteq \A$, we aim to find sufficient criteria to ensure this is an equality. Recall from \Cref{DandA} that $\widetilde{\A}= \frac{\widetilde{f}\,\widetilde{\K}^{(m)}}{\widetilde{x_{d+1}}^m}$. We now provide a similar description of the ideal obtained from \Cref{gcdit}. By \Cref{colons}, we have $(\widetilde{x_1,\ldots,x_{d+1}}) \widetilde{\K} \subseteq (\widetilde{x_{d+1}})$, hence $(\widetilde{x_1,\ldots,x_{d+1}})^i \widetilde{\K}^i \subseteq (\widetilde{x_{d+1}}^i)$. Thus we may consider the $\rr(J)$-ideal $\frac{\widetilde{f} \,\widetilde{\K}^i}{\widetilde{x_{d+1}}^i}$ for any $1\leq i\leq m$.

\begin{theorem}\label{equal}
With the assumptions of \Cref{setting1} and $\K$ as in \Cref{notation2}, one has $\frac{\widetilde{f} \,\widetilde{\K}^m}{\widetilde{x_{d+1}}^m} = ({ \BL_{m}+(\gcd I_{d+1}(\BB_m))})^{\sim}$ in $\rr(J)$. 
\end{theorem}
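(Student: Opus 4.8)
The plan is to establish, for every $1\le i\le m$, the single identity
$$\widetilde{x_{d+1}}\cdot\widetilde{g_i}\;=\;\widetilde{g'}\cdot\widetilde{g_{i-1}}\qquad\text{in }\rr(J)\qquad(\ast),$$
where $g_i:=\gcd I_{d+1}(\BB_i)$, where $g':=\gcd I_d(B')$ is the polynomial of \Cref{JDminors} attached to the Jacobian dual $B'=B(\psi')$ (recall $\widetilde{\K}=(\widetilde{g'},\widetilde{x_{d+1}})$ from the proof of \Cref{Kscm}), and where for uniformity we set $g_0:=f$ and $\partial g_0:=\partial f$, so that $\BB_1=[B(\psi)\,|\,\partial g_0]$. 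Granting $(\ast)$, everything else will be formal: iterating $(\ast)$ and using that $\rr(J)$ is a domain (\Cref{PropertiesOfA}) gives $\widetilde{x_{d+1}}^{\,i}\,\widetilde{g_i}=\widetilde{g'}^{\,i}\,\widetilde{f}$, hence $\widetilde{g_i}=\widetilde{f}\,\widetilde{g'}^{\,i}/\widetilde{x_{d+1}}^{\,i}$ inside the fraction field of $\rr(J)$. Since $\widetilde{\K}^m$ is generated by $\widetilde{g'}^{\,a}\widetilde{x_{d+1}}^{\,m-a}$ for $0\le a\le m$, the ideal $\widetilde{f}\,\widetilde{\K}^m/\widetilde{x_{d+1}}^{\,m}$ is generated by the elements $\widetilde{f}\,\widetilde{g'}^{\,a}/\widetilde{x_{d+1}}^{\,a}=\widetilde{g_a}$, i.e. $\widetilde{f}\,\widetilde{\K}^m/\widetilde{x_{d+1}}^{\,m}=(\widetilde{f},\widetilde{g_1},\dots,\widetilde{g_m})$. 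On the other hand $\BL_m+(g_m)=(\ell_1,\dots,\ell_{d+1},f,g_1,\dots,g_m)$ by \Cref{gcdit}, and its image in $\rr(J)=S[T_1,\dots,T_{d+1}]/\H$ is $(\widetilde{f},\widetilde{g_1},\dots,\widetilde{g_m})$ because $\ell_1,\dots,\ell_{d+1}\in\H$; comparing the two descriptions yields the theorem.

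To prove $(\ast)$, I would fix $i$ and $j$ with $1\le j\le d+1$, write $p_1,\dots,p_{d+1}$ for the entries of the chosen column $\partial g_{i-1}$ (so $\sum_k x_k p_k=g_{i-1}$ and $\BB_i=[B(\psi)\,|\,\partial g_{i-1}]$), and let $M$ be the $(d+1)\times d$ matrix obtained by deleting column $j$ of $B(\psi)$, with $M_t$ its submatrix with row $t$ deleted and $m_t=\det M_t$. Since $B'$ is $B(\psi)$ with its bottom row removed, $M_{d+1}=B'_j$ and so $m_{d+1}=\det B'_j=(-1)^{j+1}T_j\,g'$ by \Cref{JDminors} applied to $J'$ (\Cref{J'ideal}). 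Cofactor expansion of $\det(\BB_i)_j=\det[\,M\,|\,\partial g_{i-1}\,]$ along its last column gives $\det(\BB_i)_j=\sum_{k=1}^{d+1}(-1)^{k+d+1}p_k\,m_k=(-1)^{j+1}T_j\,p_{d+1}g'+W_j$ with $W_j:=\sum_{k=1}^{d}(-1)^{k+d+1}p_k\,m_k$, while \Cref{gcds} applied to $\BB_i$ gives $\det(\BB_i)_j=(-1)^{j+1}T_j\,g_i$; subtracting, $W_j=(-1)^{j+1}T_j(g_i-p_{d+1}g')$. Now \Cref{crlemma}, applied to $[x_1\,\cdots\,x_{d+1}]$ and $M$ (note $[x_1\,\cdots\,x_{d+1}]\cdot M=(\ell_i)_{i\ne j}$), gives $x_{d+1}m_k\equiv(-1)^{(d+1)-k}x_k\,m_{d+1}\pmod{(\ell_i:i\ne j)}$, hence $\pmod{\H}$. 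Multiplying $W_j$ by $x_{d+1}$, substituting this congruence, and using $\sum_{k=1}^{d}x_k p_k=g_{i-1}-x_{d+1}p_{d+1}$ collapses $x_{d+1}W_j$ to $(-1)^{j+1}T_j\,g'(g_{i-1}-x_{d+1}p_{d+1})$ modulo $\H$. Equating this with $x_{d+1}W_j=(-1)^{j+1}T_j\,x_{d+1}(g_i-p_{d+1}g')$, the two $x_{d+1}p_{d+1}g'$ contributions cancel and one is left with $(-1)^{j+1}T_j(x_{d+1}g_i-g'g_{i-1})\in\H$. As $\H$ is prime (\Cref{PropertiesOfA}) and $T_j\notin\H$, this forces $x_{d+1}g_i-g'g_{i-1}\in\H$, which is $(\ast)$. (If $g_{i-1}=0$ then $\partial g_{i-1}=0$, both sides of $(\ast)$ vanish, and there is nothing to check.)

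I expect the only genuinely delicate part to be the determinantal computation establishing $(\ast)$: the cofactor expansion must isolate the single ``good'' minor $m_{d+1}=\det B'_j$, and one must verify—tracking every sign—that after an application of Cramer's rule the residual term $x_{d+1}W_j$ contributes precisely what is needed to cancel $x_{d+1}p_{d+1}g'$ modulo $\H$. Everything after $(\ast)$—the iteration to $\widetilde{x_{d+1}}^{\,i}\widetilde{g_i}=\widetilde{g'}^{\,i}\widetilde{f}$, reading off the generators of $\widetilde{f}\,\widetilde{\K}^m/\widetilde{x_{d+1}}^{\,m}$, and matching the image of $\BL_m+(g_m)$—is purely formal and relies only on $\rr(J)$ being a domain and on $\widetilde{\K}=(\widetilde{g'},\widetilde{x_{d+1}})$.
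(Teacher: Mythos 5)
Your proposal is correct and rests on exactly the same pivotal identity as the paper's proof, namely $\widetilde{x_{d+1}}\cdot\widetilde{g_i}=\widetilde{g'}\cdot\widetilde{g_{i-1}}$, which the paper obtains in its inductive step via Cramer's rule applied to $(\BB_i)_1$ and $(B')_1$ together with \Cref{gcds} and \Cref{JDminors}; your cofactor-expansion derivation of $(\ast)$ (with the signs as you have checked them) is a valid, more explicit version of that same computation. The only difference is organizational: you exploit $\widetilde{\K}=(\widetilde{g'},\widetilde{x_{d+1}})$ to read off monomial generators of $\widetilde{\K}^m$ and conclude in closed form, whereas the paper runs an induction $D_i=D_i'$ reducing general products of elements of $\K$ to the case $w_i=g'$ — the substance is the same.
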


\begin{proof}
We proceed in a manner similar to the proof of \cite[4.7]{BM}. Letting $D_i = \frac{\widetilde{f} \,\widetilde{\K}^i}{\widetilde{x_{d+1}}^i}$ and  $D_i' =( \BL_{i}+(\gcd I_{d+1}(\BB_i)))^\sim$, it is clear that $D_i \subseteq D_{i+1}$ and $D_i' \subseteq D_{i+1}'$ for any $1\leq i\leq m-1$. We show that $D_i = D_i'$ for all $1\leq i\leq m$ by induction.

Suppose that $i=1$. We first show that $D_1\subseteq D_1' =(\LL + (\gcd I_{d+1}(B)))^\sim$. As noted in the proof of \Cref{PropertiesOfA}, $\K$ may be written as $\K=(\ell_1',\ldots,\ell_{d+1}') + (\gcd I_d(B'))+(x_{d+1})$, where $B'$ is the submatrix obtained by deleting the last row of $B(\psi)$ and $[\ell_1' \ldots \ell_{d+1}'] = [x_1 \ldots x_d] \cdot B'$. Thus modulo $\H$, we see $(\ell_1'\widetilde{,\ldots,}\ell_{d+1}') \subset (\widetilde{x_{d+1}})$ and so $\frac{\widetilde{f} \,(\ell_1'\widetilde{,\ldots,}\ell_{d+1}')}{\widetilde{x_{d+1}}}\subseteq (\widetilde{f}) = \widetilde{\LL}$. Let $g' = \gcd I_d(B')$ and recall that $B'$ is the Jacobian dual of $\psi'$, an alternating matrix as in \Cref{J'ideal}, hence the minors of $B'$ factor in a similar manner as the minors of $B$, following \Cref{JDminors} and \Cref{gcds}. Let $B_1'$ and $B_1$ denote the submatrices of $B'$ and $B$, respectively, which are obtained by deleting their first columns. By Cramer's rule, in $\rr(J)$ we have $\widetilde{\det B_1} \cdot \widetilde{x_{d+1}} =\widetilde{f}\cdot \widetilde{\det B_1'}$. By \Cref{JDminors} we have $\det B_1' = g'\cdot T_1$ and by \Cref{gcds} we have $\det B_1 = g_1\cdot T_1$, where $g_1 = \gcd I_{d+1}(B)$. Since $\rr(J)$ is a domain, it follows that $\widetilde{g_1} \cdot \widetilde{x_{d+1}} =\widetilde{f}\cdot \widetilde{g'}$. Thus $\frac{\widetilde{f} \widetilde{g'}}{\widetilde{x_{d+1}}}= \widetilde{g_1}$ and so $D_1\subseteq D_1'$.

To show the reverse containment $D_1'\subseteq D_1$, recall $x_{d+1} \in \K$ and so $\widetilde{f} = \frac{\widetilde{f}\widetilde{x_{d+1}}}{\widetilde{x_{d+1}}} \in D_1$. Thus $\widetilde{\BL_1} =\widetilde{\LL} \subset D_1$ and so it suffices to show that $\widetilde{g_1}\in D_1$, where $g_1= \gcd I_{d+1}(B)$ as before. However, from the previous argument above we have $\widetilde{g_1} = \frac{\widetilde{f} \widetilde{g'}}{\widetilde{x_{d+1}}} \in D_1$, and so $D_1'\subseteq D_1$. Thus $D_1' = D_1$, which completes the initial step.

We are finished if $m=1$, so we may assume that $m\geq 2$ and $D_j=D_j'$ for all $1\leq j \leq i-1$ for some $i\leq m$. We begin by showing that $D_i \subseteq D_i'$. Consider $\frac{\widetilde{f} \widetilde{w_1}\cdots \widetilde{w_i}}{\widetilde{x_{d+1}}^i} \in D_i$, for $w_1,\ldots, w_i \in \K$. Writing $\widetilde{w'} = \frac{\widetilde{f} \widetilde{w_1}\cdots \widetilde{w_{i-1}}}{\widetilde{x_{d+1}}^{i-1}}$, notice that $\widetilde{w'} \in D_{i-1} = D_{i-1}' =( \BL_{i-1}+(\gcd I_{d+1}(\BB_{i-1})))^\sim$ by the induction hypothesis. With this we show that $\frac{\widetilde{f} \widetilde{w_1}\cdots \widetilde{w_i}}{\widetilde{x_{d+1}}^i } = \frac{\widetilde{w'}\widetilde{w_i}}{\widetilde{x_{d+1}}}$ is contained in $D_i'$. If $w' \in \BL_{i-1}$, then $\widetilde{w'} \in D'_{i-2}= D_{i-2}$ if $i>2$, and $\widetilde{w'} \in (\widetilde{f})$ if $i=2$. In either case, we have $\frac{\widetilde{w'}\widetilde{w_i}}{\widetilde{x_{d+1}}} \in D_{i-1} = D_{i-1}' \subseteq D_i'$, by the induction hypothesis, and we are finished. Hence we may assume that $w' \in (g_{i-1})$, where $g_{i-1} = \gcd I_{d+1}(\BB_{i-1})$, and it is enough to take $w' = g_{i-1}$.

As noted in the proof of \Cref{Kscm}, recall that $\widetilde{\K} = (\widetilde{g'},\widetilde{x_{d+1}})$, where $g'=\gcd I_d(B')$. As $\widetilde{w_i}\in \widetilde{\K}$, there are two cases to consider. If $\widetilde{w_i}\in (\widetilde{x_{d+1}})$, then $\frac{\widetilde{w'}\widetilde{w_i}}{\widetilde{x_{d+1}}} \in D_{i-1}' \subseteq D_i'$ and we are finished. Thus we may assume that $w_i \in (g')$ and it is enough to take $w_i =g'$. Let $(\BB_i)_1$ denote the submatrix obtained by deleting the first column of $\BB_i$. By Cramer's rule, in $\rr(J)$ we have $\widetilde{x_{d+1}}\cdot \widetilde{\det (\BB_{i})_1} =\widetilde{g_{i-1}}\cdot \widetilde{\det (B')_1}$, where $(B')_1$ is the submatrix of $B'$ obtained by deleting its first column as before. Once again, we have $\det B_1' = g'\cdot T_1$ by \Cref{JDminors} and $\det (\BB_i)_1 = g_i\cdot T_1$ by \Cref{gcds}, where $g_i = \gcd I_{d+1} (\BB_i)$. Again noting that $\rr(J)$ is a domain, it then follows that $\widetilde{x_{d+1}} \cdot \widetilde{g_i} =\widetilde{g_{i-1}}\cdot \widetilde{g'}$. Thus $\frac{\widetilde{w'}\widetilde{w_i}}{\widetilde{x_{d+1}}} = \frac{\widetilde{g_{i-1}}\cdot \widetilde{g'}}{\widetilde{x_{d+1}}} = \widetilde{g_i} \in D_i'$, which shows that $D_i \subseteq D_i'$.

To show that $D_i' \subseteq D_i$, note that $\widetilde{\BL_i} = (\BL_{i-1} + (\gcd I_{d+1}(\BB_{i-1})))^\sim$ following \Cref{gcdit}. Thus $\widetilde{\BL_i} = D_{i-1}' = D_{i-1}\subset D_i$, by the induction hypothesis. Moreover, the previous argument shows that $\widetilde{g_i} = \frac{\widetilde{g_{i-1}}\cdot \widetilde{g'}}{\widetilde{x_{d+1}}}$, where $g_i = \gcd I_{d+1}(\BB_i)$, $g_{i-1} = \gcd I_{d+1}(\BB_{i-1})$, and $g' = \gcd I_d(B')$. Notice that $g_{i-1} \in D_{i-1}' = D_{i-1}$ by the induction hypothesis, and $g' \in \K$. Thus $\widetilde{g_i} = \frac{\widetilde{g_{i-1}}\cdot \widetilde{g'}}{\widetilde{x_{d+1}}} \in D_i$ which shows that $D_i' \subseteq D_i$. Hence $D_i'= D_i$, which completes the induction.
\end{proof}

We end this section by giving a necessary and sufficient condition for when the ideal of gcd-iterations coincides with $\A$.

\begin{corollary}\label{powersym}
With the assumptions of \Cref{setting1}, $\A = \BL_m + (\gcd I_{d+1}(\BB_m))$ in $S[T_1,\ldots,T_{d+1}]$ if and only if $\widetilde{\K}^m = \widetilde{\K}^{(m)}$ in $\rr(J)$.
\end{corollary}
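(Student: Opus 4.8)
The plan is to read this off from \Cref{DandA} and \Cref{equal} via a routine correspondence of ideals, the substantive work having already been carried out in those two theorems. First I would record that both ideals appearing in the statement contain $\H = (\ell_1,\ldots,\ell_{d+1})$: we have $\H \subseteq \LL \subseteq \A$ by \Cref{Anotation}, while $\H \subseteq \LL = \BL_1 \subseteq \BL_m + (\gcd I_{d+1}(\BB_m))$ by \Cref{gcdit}. Since $\rr(J) \cong S[T_1,\ldots,T_{d+1}]/\H$, the order-preserving bijection between ideals of $S[T_1,\ldots,T_{d+1}]$ containing $\H$ and ideals of $\rr(J)$ shows that $\A = \BL_m + (\gcd I_{d+1}(\BB_m))$ holds in $S[T_1,\ldots,T_{d+1}]$ if and only if $\widetilde{\A} = (\BL_m + (\gcd I_{d+1}(\BB_m)))^{\sim}$ holds in $\rr(J)$.

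Next I would substitute the descriptions already established for these two $\rr(J)$-ideals. By \Cref{DandA} one has $\widetilde{\A} = \frac{\widetilde{f}\,\widetilde{\K}^{(m)}}{\widetilde{x_{d+1}}^m}$, and by \Cref{equal} one has $(\BL_m + (\gcd I_{d+1}(\BB_m)))^{\sim} = \frac{\widetilde{f}\,\widetilde{\K}^{m}}{\widetilde{x_{d+1}}^m}$; both are genuine $\rr(J)$-ideals by \Cref{colons}, as noted prior to those theorems. Thus the asserted equivalence reduces to showing that $\frac{\widetilde{f}\,\widetilde{\K}^{(m)}}{\widetilde{x_{d+1}}^m} = \frac{\widetilde{f}\,\widetilde{\K}^{m}}{\widetilde{x_{d+1}}^m}$ if and only if $\widetilde{\K}^m = \widetilde{\K}^{(m)}$.

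To finish I would use that $\rr(J)$ is a domain by \Cref{PropertiesOfA} and that $\widetilde{f}$ and $\widetilde{x_{d+1}}$ are nonzero in $\rr(J)$, since $f$ and $x_{d+1}$ are nonzero in the subring $S \subseteq \rr(J)$. Hence $\frac{\widetilde{f}}{\widetilde{x_{d+1}}^m}$ is a nonzero element of the fraction field of $\rr(J)$, so multiplication by it is an injective $\rr(J)$-linear map on that fraction field and therefore carries distinct $\rr(J)$-submodules to distinct submodules. Consequently $\frac{\widetilde{f}}{\widetilde{x_{d+1}}^m}\widetilde{\K}^{(m)} = \frac{\widetilde{f}}{\widetilde{x_{d+1}}^m}\widetilde{\K}^{m}$ if and only if $\widetilde{\K}^{(m)} = \widetilde{\K}^{m}$, which is the stated condition (recall $\widetilde{\K}^m \subseteq \widetilde{\K}^{(m)}$ always holds).

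I do not expect any serious obstacle here: once \Cref{DandA} and \Cref{equal} are in hand the corollary is purely formal, and the only points needing a little care are verifying that the two fractional ideals really are ideals of $\rr(J)$ — already handled via \Cref{colons} — and that the cancellation of $\frac{\widetilde{f}}{\widetilde{x_{d+1}}^m}$ is valid, which it is because $\rr(J)$ is a domain.
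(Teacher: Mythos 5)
Your argument is correct and is essentially the paper's proof: both combine \Cref{DandA} and \Cref{equal} to reduce the statement to $\frac{\widetilde{f}\,\widetilde{\K}^{(m)}}{\widetilde{x_{d+1}}^m} = \frac{\widetilde{f}\,\widetilde{\K}^{m}}{\widetilde{x_{d+1}}^m}$ and then cancel using that $\rr(J)$ is a domain. The only cosmetic difference is that you justify passing between $S[T_1,\ldots,T_{d+1}]$ and $\rr(J)$ via the correspondence of ideals containing $\H$, whereas the paper invokes the containment from \Cref{gcdsinA}; both are fine.
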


\begin{proof}
Combining \Cref{equal}, \Cref{gcdsinA}, and \Cref{DandA}, we have
$$\frac{\widetilde{f} \,\widetilde{\K}^m}{\widetilde{x_{d+1}}^m} = (\BL_m + (\gcd I_{d+1}(\BB_m)))^\sim \subseteq \widetilde{\A} = \frac{\widetilde{f} \,\widetilde{\K}^{(m)}}{\widetilde{x_{d+1}}^m}
$$
and the claim follows, noting that $\rr(J)$ is a domain.
\end{proof}

\section{The Main Result}\label{defidealsec}

In \Cref{powersym}, a condition was given for when the algorithm of gcd-iterations yields a generating set of $\A$, the ideal defining $\rr(I)$ as a quotient of $S[T_1,\ldots,T_{d+1}]$. In this section, we show that this condition is satisfied and moreover, this iterative method produces a \textit{minimal} generating set of $\A$. Additional properties of $\rr(I)$ such as Cohen-Macaulayness and Castelnuovo-Mumford regularity are studied as well. We proceed in the same manner as section 5 of \cite{BM} throughout.

\begin{proposition}\label{hgtIdB}
In $S[T_1,\ldots, T_{d+1}]$, one has $\hgt I_{d}(B(\psi)) \geq 2$.
\end{proposition}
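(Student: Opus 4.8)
The plan is to bound the height of $I_d(B(\psi))$ from below by a localization argument, using the already-established fact that the ideal $\H = (\ell_1,\ldots,\ell_{d+1})$ is prime of height $d+1$ in $S[T_1,\ldots,T_{d+1}]$ (equivalently that $\rr(J)$ is a domain of dimension $d+2$, from \Cref{PropertiesOfA}) together with the factorization of the maximal minors of $B(\psi)$ supplied by \Cref{JDminors} / \Cref{gcds}. First I would recall that $B(\psi)$ is a $(d+1)\times(d+1)$ alternating-type Jacobian dual with entries in $k[T_1,\ldots,T_{d+1}]$ satisfying $[\,\x\,]\cdot B(\psi) = [\,\T\,]\cdot\psi$ and $B(\psi)\cdot[\,\T\,]^t = 0$, and that $I_d(B(\psi))$ is the ideal of $d\times d$ minors. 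Since $d$ is even, $B(\psi)$ is a square alternating matrix, so its $d\times d$ minors are governed by Pfaffian-type identities; in particular the submaximal minors $\det B_j$ of the full Jacobian dual factor as $\det B_j = (-1)^{j+1}T_j\, g$ for a single polynomial $g = \gcd I_d(B(\psi))$, where $B_j$ deletes the $j$th row and column appropriately — here I would cite the column-deletion version from \Cref{gcds} (with the trivial column $C=0$) or reprove the one-line Cramer's-rule computation.

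The key step is then to show that $I_d(B(\psi))$ is not contained in any height-one prime of $S[T_1,\ldots,T_{d+1}]$. Suppose $\p$ is a height-one prime with $I_d(B(\psi)) \subseteq \p$. Since $S[T_1,\ldots,T_{d+1}]$ is a UFD, $\p = (p)$ is principal, so $p$ divides every $d\times d$ minor of $B(\psi)$, hence $p \mid g$. But the rank of $B(\psi)$ over the fraction field is exactly $d$ (it drops by one from $d+1$ because $B(\psi)\cdot[\,\T\,]^t=0$ and the $T_i$ are nonzero, while the rank is at least $d$ since $J$ is not of linear type — equivalently $g \neq 0$, as noted after \Cref{notation2} via the comparison with $J'$). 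If $g$ has a prime factor $p$, then after inverting all the other prime factors of $g$ we would still need $I_d(B(\psi))$ to vanish along $V(p)$, forcing $B(\psi)$ to have rank $\leq d-1$ on the generic point of $V(p)$; I would instead argue directly that $g$ is the $\gcd$, so by definition no prime divides all of $I_d(B(\psi))$ unless it divides $g$, and conversely $g$ already carries all common factors — so $I_d(B(\psi)) = g\cdot(\text{some ideal of height} \geq 2)$. Concretely: write $I_d(B(\psi)) = (g)\cdot K$ where $K$ is an ideal with $\hgt K \geq 2$ (the minors $\det B_j / g$ generate an ideal whose vanishing locus has codimension $\geq 2$ because these quotients are, up to the unit factors $(-1)^{j+1}T_j$ absorbed appropriately, essentially the $T_i$ together with further relations). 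Then $\hgt I_d(B(\psi)) = \hgt\big((g)K\big)$; since $(g)$ is principal this is $\hgt K \geq 2$ provided $g \notin \p$ for the relevant primes, and $g$ being a nonzerodivisor handles that.

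A cleaner route, which I would prefer to write up, is: localize at a height-one prime $\p$. Over the localization $S[T_1,\ldots,T_{d+1}]_\p$, which is a DVR, suppose $I_d(B(\psi))_\p \neq S[T_1,\ldots,T_{d+1}]_\p$. By \Cref{gcds} applied with $C=0$, $I_d(B(\psi))_\p = (g)_\p\cdot(T_1,\ldots,T_{d+1})_\p$. Since $(T_1,\ldots,T_{d+1})$ has height $d+1 \geq 5 > 1$, it is not contained in any height-one prime, so $(T_1,\ldots,T_{d+1})_\p = S[T_1,\ldots,T_{d+1}]_\p$, whence $I_d(B(\psi))_\p = (g)_\p$. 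But $g \neq 0$ (as $J'$ is not of linear type, from the proof of \Cref{colons}), so $(g)_\p \neq 0$; thus $I_d(B(\psi))_\p \neq 0$ for every height-one $\p$, which only says $\hgt I_d(B(\psi)) \geq 1$. To upgrade to $\geq 2$ I then observe that $g$ itself generates a height-one (or unit) ideal, and $I_d(B(\psi)) = (g)(T_1,\ldots,T_{d+1})$ globally by \Cref{gcds}; hence the minimal primes of $I_d(B(\psi))$ are the minimal primes of $(g)$ union those of $(T_1,\ldots,T_{d+1})$. The latter has height $d+1$. For the former, any prime factor $p$ of $g$ would have to divide all the submaximal minors $\det B_j = (-1)^{j+1}T_j g$, which is automatic — so this does not immediately give height $\geq 2$, and I must instead verify that $I_d(B(\psi)) \not\subseteq (p)$ for each prime divisor $p$ of $g$. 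This is where the structure of $\psi$ enters: because $I_1(\psi) = (x_1,\ldots,x_{d+1})$ and $J$ is of linear type with $\ff(J) \cong k[T_1,\ldots,T_{d+1}]$, one can check $I_d(B(\psi))$ specializes modulo $(g)$ to something nonzero, e.g. by exhibiting an explicit $d\times d$ minor coprime to $g$.

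The main obstacle is precisely ruling out that a prime factor of $g$ is contained in every $d\times d$ minor of $B(\psi)$ — i.e., showing $g = \gcd I_d(B(\psi))$ is genuinely the full common factor and that the "co-ideal" $I_d(B(\psi))/(g)$ (generated by the $T_i$, by \Cref{gcds}) has height $\geq 2$. The clean way to close this is: by \Cref{gcds} with $C = 0$, the $d\times d$ minors of $B(\psi)$ obtained by deleting one column and one row are exactly $\pm T_j\, g$ up to sign, so $I_d(B(\psi)) \supseteq (g)(T_1,\ldots,T_{d+1}) = (gT_1,\ldots,gT_{d+1})$, and in fact — since $g$ is the gcd — one can show equality is not needed: it suffices that $I_d(B(\psi)) \subseteq (g)$ fails. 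Because the submaximal minors $\det B_j = \pm T_j g$ already lie in $I_d(B(\psi))$ and $g \neq 0$, dividing out $g$ we get $(T_1,\ldots,T_{d+1}) \subseteq I_d(B(\psi)):(g)^\infty$, and since $(T_1,\ldots,T_{d+1})$ has height $d+1 > 1$, the ideal $I_d(B(\psi))$ cannot be contained in a single height-one prime $(p)$ with $p \nmid g$; and for $p \mid g$, containment $I_d(B(\psi)) \subseteq (p)$ would force, after dividing by the $p$-part of $g$, that $g/p^{v_p(g)}$ times $(T_1,\ldots,T_{d+1})$ plus the remaining minors all lie in $(p)$ — impossible since $(T_1,\ldots,T_{d+1}) \not\subseteq (p)$ and $p \nmid g/p^{v_p(g)}$. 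Hence no height-one prime contains $I_d(B(\psi))$, so $\hgt I_d(B(\psi)) \geq 2$.
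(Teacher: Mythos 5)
There is a genuine gap, and it sits exactly at the point your proposal identifies as "the main obstacle" but never closes. Your argument rests on a structural claim about $I_d(B(\psi))$ -- that its generators are $\pm T_i\,g$ for a single $g$, so that $I_d(B(\psi))=(g)(T_1,\ldots,T_{d+1})$ -- justified by "\Cref{gcds} with $C=0$." But \Cref{gcds} concerns the $(d+1)\times(d+1)$ minors of the augmented $(d+1)\times(d+2)$ matrix $[B(\psi)\,|\,C]$, obtained by deleting one \emph{column}; with $C=0$ every such minor vanishes and the lemma gives nothing. The ideal $I_d(B(\psi))$ in this proposition consists of the $d\times d$ minors of the square matrix $B(\psi)$, obtained by deleting a row \emph{and} a column, and neither \Cref{gcds} nor \Cref{JDminors} (which is about the $d\times(d+1)$ matrix $B(\varphi)$ in Morey's setting) applies to it. Nor is $B(\psi)$ alternating, so no Pfaffian identities are available (see the explicit $B(\psi)$ in the paper's example). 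What is true is that $B(\psi)\cdot[\,\T\,]^t=0$ and $\rk B(\psi)=d$ force $\adj(B(\psi))_{ij}=\pm T_i c_j$ for polynomials $c_1,\ldots,c_{d+1}$ spanning the left kernel, so $I_d(B(\psi))=(T_1,\ldots,T_{d+1})(c_1,\ldots,c_{d+1})$; but then $\hgt I_d(B(\psi))\geq 2$ holds \emph{if and only if} the $c_j$ have no common prime factor, and that is precisely the content of the proposition. Your final paragraph is circular here: if a prime $p$ divided every $c_j$, then every $d\times d$ minor would lie in $(p)$ and the height would be $1$; nothing in your argument excludes this. (Two smaller slips: $J$ \emph{is} of linear type by \Cref{Jlineartype} -- it is $J'$ that is not -- and the nonvanishing $g'\neq 0$ you quote refers to $\gcd I_d(B')$ for the row-deleted submatrix $B'$, not to a gcd attached to $B(\psi)$ itself.)

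The paper closes this gap by a completely different, module-theoretic route that you should compare with: $B(\psi)$ has linear entries in $k[T_1,\ldots,T_{d+1}]$ and presents a module $E=\coker B(\psi)$ over $k[\T]$ with $\sym_{k[\T]}(E)\cong\sym(J)\cong\rr(J)$, which is a domain of dimension $d+2$ by \Cref{Jlineartype} and \Cref{PropertiesOfA}. The dimension formula of \cite[6.8]{HSV2} then forces $\rk E=1$, and the integrality criterion in the same result yields $\hgt I_t(B(\psi))\geq \rk B(\psi)-t+2$, which at $t=d$ is exactly $\hgt I_d(B(\psi))\geq 2$. If you want to salvage a direct minor-by-minor argument, you would need an independent proof that the left-kernel entries $c_j$ are coprime, which is not easier than invoking the symmetric-algebra criterion.
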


\begin{proof}
Recall that $J$ is of linear type by \Cref{Jlineartype}, hence $\sym(J)\cong \rr(J)$. In particular, $\sym(J)$ is a domain of dimension $d+2$, following \Cref{PropertiesOfA}. As $\psi$ consists of linear entries in $S$ and $B(\psi)$ consists of linear entries in $k[T_1,\ldots,T_{d+1}]$, there is an isomorphism of symmetric algebras $\sym(J) \cong \sym_{k[\T]}(E)$, where $E=\coker B(\psi)$. Since $\sym(J)$ is a domain, by \cite[6.8]{HSV2} we have
$$d+2 = \dim \sym(J) = \dim {\rm Sym}_{k[\T]}(E) = \rk E + \dim k[T_1,\ldots,T_{d+1}].$$
Thus $\rk E =1$, hence by \cite[6.8]{HSV2} we have $\hgt I_d(B(\psi)) \geq 2$.
\end{proof}

We now present the main result of this paper.

\begin{theorem}\label{mainresult}
With the assumptions of \Cref{setting1}, we have 
$$\A = \BL_m + (\gcd I_{d+1}(\BB_m)).$$
Moreover, the defining ideal $\J$ of $\rr(I)$ satisfies 
$$\J =\overline{\BL_m + (\gcd I_{d+1}(\BB_m))}$$
where $\overline{\,\cdot\,}$ denotes images modulo $(f)$. 
\end{theorem}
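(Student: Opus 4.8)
The plan is to deduce the first equality from \Cref{powersym} by verifying its hypothesis $\widetilde{\K}^m=\widetilde{\K}^{(m)}$ in $\rr(J)$, and then to obtain the description of $\J$ by reducing modulo $(f)$. First I would assemble the structural information about $\widetilde{\K}$ gathered in \Cref{hypring} and \Cref{defidealsec}: by \Cref{Kscm} the ideal $\widetilde{\K}=(\widetilde{g'},\widetilde{x_{d+1}})$ is strongly Cohen--Macaulay and generically a complete intersection of height one in the Cohen--Macaulay domain $\rr(J)$. Since $\widetilde{\K}$ is minimally generated by two elements, $\mu(\widetilde{\K}_\p)\le 2\le\dim\rr(J)_\p$ at every prime $\p\supseteq\widetilde{\K}$ with $\hgt\p\ge 2$, while $\widetilde{\K}_\p=(\widetilde{x_{d+1}})_\p$ is principal at the height-one primes of $\widetilde{\K}$; hence $\widetilde{\K}$ satisfies $G_\infty$. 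The height estimate $\hgt I_d(B(\psi))\ge 2$ of \Cref{hgtIdB} is the input used to control the blowup algebras of $\widetilde{\K}$, namely its analytic spread and the Cohen--Macaulayness of $\rr(\widetilde{\K})$ and of the associated graded ring $\gr(\widetilde{\K})$.

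With these properties recorded, I would invoke the structure theory of blowup algebras of strongly Cohen--Macaulay ideals satisfying $G_\infty$ (cf. \cite{HSV1,HSV2} and the argument of \cite{BM}): such $\widetilde{\K}$ is of linear type and both $\rr(\widetilde{\K})$ and $\gr(\widetilde{\K})$ are Cohen--Macaulay. Combined with the fact that $\widetilde{\K}$ is generically a complete intersection in the Cohen--Macaulay domain $\rr(J)$, this forces every ordinary power $\widetilde{\K}^n$ to be unmixed of height one, and hence $\widetilde{\K}^n=\widetilde{\K}^{(n)}$ for all $n\ge 1$; in particular $\widetilde{\K}^m=\widetilde{\K}^{(m)}$. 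Concretely, to run this one needs only that $\gr(\widetilde{\K})$ has no embedded primes, so that $\widetilde{\K}^n$ acquires no embedded prime either, and then unmixedness of $\widetilde{\K}^n$ is equivalent to $\widetilde{\K}^n=\widetilde{\K}^{(n)}$. Feeding $\widetilde{\K}^m=\widetilde{\K}^{(m)}$ into \Cref{powersym} yields $\A=\BL_m+(\gcd I_{d+1}(\BB_m))$ in $S[T_1,\ldots,T_{d+1}]$.

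Finally, for the statement on $\J$ I would pass to the quotient by $(f)$. As observed after \Cref{Anotation}, $\overline{\A}=\J$, because $S[T_1,\ldots,T_{d+1}]/\A\cong R[T_1,\ldots,T_{d+1}]/\J\cong\rr(I)$; therefore $\J=\overline{\A}=\overline{\BL_m+(\gcd I_{d+1}(\BB_m))}$, which is the asserted formula. The main obstacle in this argument is the equality $\widetilde{\K}^m=\widetilde{\K}^{(m)}$: one must convert the homological hypotheses on $\widetilde{\K}$ (strongly Cohen--Macaulay, generically a complete intersection, $G_\infty$), together with the height bound of \Cref{hgtIdB}, into the vanishing of embedded primes of the ordinary powers $\widetilde{\K}^n$, and this is where the theory of Rees algebras and associated graded rings of such ideals does the real work. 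By contrast, the reduction through \Cref{powersym} and the passage modulo $(f)$ are formal.
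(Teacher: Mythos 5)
Your overall strategy is the right one and matches the paper's: reduce via \Cref{powersym} to the equality $\widetilde{\K}^m=\widetilde{\K}^{(m)}$, and then pass modulo $(f)$ using $\overline{\A}=\J$. Those two reductions are indeed formal. But the middle step, where you claim that $\widetilde{\K}$ being strongly Cohen--Macaulay, generically a complete intersection, and $G_\infty$ already forces $\widetilde{\K}^n=\widetilde{\K}^{(n)}$, contains a genuine gap: that implication is false. The condition actually needed (this is \cite[3.4]{SV}, the criterion the paper invokes) is the local bound $\mu(\widetilde{\K}_\p)\leq \max\{\hgt\widetilde{\K},\,\hgt\p-1\}$ for all primes $\p\supseteq\widetilde{\K}$, which for a height-one ideal demands $\mu(\widetilde{\K}_\p)\leq 1$ at every height-two prime containing it. This is strictly stronger than $G_\infty$, which only demands $\mu(\widetilde{\K}_\p)\leq\hgt\p=2$ there --- and that weaker bound is automatic for the two-generated ideal $\widetilde{\K}=(\widetilde{g'},\widetilde{x_{d+1}})$, so your verification of $G_\infty$ carries no information. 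A concrete counterexample to your claimed implication: $I=(x^2,xy)=x\cdot(x,y)$ in $k[x,y]$ is a height-one, strongly Cohen--Macaulay, generically complete intersection ideal of linear type satisfying $G_\infty$ in a Cohen--Macaulay domain, yet $I\neq I^{(1)}=(x)$ because $(x,y)$ is an embedded prime of $I$. The related assertion that $\gr(\widetilde{\K})$ having no embedded primes prevents embedded primes of $\widetilde{\K}^n$ also fails: associated primes of $R/I^n$ are contractions of associated primes of $\gr(I)$, but contractions of \emph{minimal} primes of $\gr(I)$ can be embedded primes of $I^n$ (as in the example above, where the fiber over $(x,y)$ is a minimal component of $\gr(I)$).

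What is missing is precisely the content of the paper's proof: one must check, for each height-two prime $\p\supseteq\widetilde{\K}$, that $\widetilde{\K}_\p$ is principal. When $\p\nsupseteq(\widetilde{x_1,\ldots,x_{d+1}})$ this follows from the linkage in \Cref{colons} as in \Cref{Kscm}, giving $\widetilde{\K}_\p=(\widetilde{x_{d+1}})_\p$. The essential case is $\p\supseteq(\widetilde{x_1,\ldots,x_{d+1}})$: here \Cref{hgtIdB} is used not to ``control the blowup algebras of $\widetilde{\K}$'' but to guarantee that some $d\times d$ minor $w$ of $B(\psi)$ has $\widetilde{w}\notin\p$ (since $(x_1,\ldots,x_{d+1})+I_d(B(\psi))$ has height at least $d+3$, its image in $\rr(J)$ has height at least $3>\hgt\p$); then the Cramer's-rule identity $\widetilde{x_{d+1}}\cdot\widetilde{w}=\pm\,\widetilde{x_i}\widetilde{T_j}\widetilde{g'}$ shows $\widetilde{x_{d+1}}\in(\widetilde{g'})_\p$, hence $\widetilde{\K}_\p=(\widetilde{g'})_\p$. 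Without this local computation your argument does not close.
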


\begin{proof}
We proceed as in the proof of \cite[5.3]{BM}. By \Cref{powersym}, it suffices to show that $\widetilde{\K}^m = \widetilde{\K}^{(m)}$ in order to conclude that $\A = \BL_m + (\gcd I_{d+1}(\BB_m))$, from which it follows that $\J=\overline{\BL_m + (\gcd I_{d+1}(\BB_m))}$. Recall that $\widetilde{\K}$ is a strongly Cohen-Macaulay $\rr(J)$-ideal of height one and is generically a complete intersection, by \Cref{PropertiesOfA} and \Cref{Kscm}. Thus by \cite[3.4]{SV}, it is enough to show that
$$\mu(\widetilde{\K}_\p) \leq \hgt \p -1 =1$$
for any prime $\rr(J)$-ideal $\p$ containing $\widetilde{\K}$ with $\hgt \p =2$, in order to conclude that $\widetilde{\K}^m = \widetilde{\K}^{(m)}$. Let $\p$ be such a prime ideal in $\rr(J)$ and we consider two cases.

 Recall that $(\widetilde{x_1,\ldots,x_{d+1}})$ is a prime $\rr(J)$-ideal with height one by \Cref{PropertiesOfA}. If $\p \nsupseteq (\widetilde{x_1,\ldots,x_{d+1}})$, repeating the argument within the proof of \Cref{Kscm} shows that $\widetilde{\K}_\p = (\widetilde{x_{d+1}})_\p$. Thus the claim is satisfied in this case.

 Now assume that $\p\supseteq (\widetilde{x_1,\ldots,x_{d+1}})$. Recall that $\hgt I_d(B(\psi)) \geq 2$ by \Cref{hgtIdB}, and $B(\psi)$ has entries in $k[T_1,\ldots,T_{d+1}]$. Thus the ideal $(x_1,\ldots,x_{d+1})+I_d(B(\psi))$ has height at least $d+3$ in $S[T_1,\ldots,T_{d+1}]$, hence the image of this ideal in $\rr(J)$ has height at least $3$. With this, it follows that $\p \nsupseteq \widetilde{I_d(B(\psi))}$ as $\hgt \p =2$. Thus there is some $d\times d$ minor $w$ of $B(\psi)$ with $\widetilde{w}\notin \p$.

 As $B(\psi)$ is a $(d+1) \times (d+1)$ matrix, this minor $w$ is obtained by deleting row $i$ and column $j$ of $B(\psi)$, for some $1\leq i,j\leq d+1$. Deleting column $j$ of $B(\psi)$ and applying \Cref{crlemma}, we have
 \begin{equation}\label{wequation}
 \widetilde{x_{d+1}} \cdot \widetilde{w} = (-1)^{i-d-1}\widetilde{x_i}\cdot \widetilde{\det (B')_j},
 \end{equation}
 where $(B')_j$ is the submatrix of $B'$ obtained by deleting column $j$. Recall that $B' = B(\psi')$, where $\psi'$ is as in \Cref{J'ideal}. Hence by \Cref{JDminors} we have $\det (B')_j = (-1)^{j+1}T_j \cdot g'$, where $g' =\gcd I_d(B')$. Thus (\ref{wequation}) becomes
\begin{equation}\label{wequation2}
\widetilde{x_{d+1}} \cdot \widetilde{w} = (-1)^{i-d+j}\widetilde{x_i}\cdot\widetilde{T_j} \cdot \widetilde{g'}.
\end{equation}
Localizing at $\p$, $\widetilde{w}$ becomes a unit and (\ref{wequation2}) shows that $(\widetilde{x_{d+1}})_\p \in (\widetilde{g'})_\p$. Thus $\widetilde{\K}_\p = (\widetilde{g'},\widetilde{x_{d+1}})_\p = (\widetilde{g'})_\p$, and again the claim is satisfied. 
\end{proof}


\begin{corollary}\label{gcdsnotzero}
For all $1\leq i\leq m$, we have $\gcd I_{d+1}(\BB_i) \neq 0$. Additionally, $\ff(I) \cong k[T_1,\ldots,T_{d+1}]/(\mathfrak{f})$ where $\deg \mathfrak{f} = m(d-1)$.
\end{corollary}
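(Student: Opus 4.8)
The plan is to derive both assertions from the main result \Cref{mainresult}, which identifies $\A$ with $\BL_m + (\gcd I_{d+1}(\BB_m))$, together with the fractional-ideal identities already established inside the proof of \Cref{equal}. Throughout I set $g_i = \gcd I_{d+1}(\BB_i)$ for $1 \le i \le m$, put $g_0 = f$, and let $g' = \gcd I_d(B')$.

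For the nonvanishing, I would isolate the key relation obtained in the proof of \Cref{equal}: deleting the first column of $\BB_i = [B(\psi)\,|\,\partial g_{i-1}]$ and invoking Cramer's rule, \Cref{JDminors}, \Cref{gcds}, and the fact (\Cref{PropertiesOfA}, and the proof of \Cref{gcdwd}) that $\rr(J)$ is a domain with $T_1 \notin \H$, one gets
$$\widetilde{x_{d+1}}\,\widetilde{g_i} \,=\, \widetilde{g_{i-1}}\,\widetilde{g'} \qquad (1 \le i \le m)$$
in $\rr(J)$. Then I would induct on $i$: the element $g_0 = f$ is nonzero in the subring $S \subseteq \rr(J)$, and $\widetilde{g'} \ne 0$ since $J'$ is not of linear type (as noted in the proof of \Cref{colons}); as $\rr(J)$ is a domain, the displayed identity propagates nonvanishing, so $\widetilde{g_i} \ne 0$, and hence $g_i \ne 0$ in $S[T_1,\ldots,T_{d+1}]$, for every $i$. (Telescoping the identities yields the cleaner $\widetilde{x_{d+1}}^{\,i}\,\widetilde{g_i} = \widetilde{f}\,\widetilde{g'}^{\,i}$, from which the same conclusion is immediate.)

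For the special fiber ring I would compute directly. Since $\rr(I) \cong S[T_1,\ldots,T_{d+1}]/\A$, we have $\ff(I) = \rr(I)/\overline{(x_1,\ldots,x_{d+1})}\,\rr(I) \cong S[T_1,\ldots,T_{d+1}]/\bigl(\A + (x_1,\ldots,x_{d+1})\bigr)$. By \Cref{mainresult}, $\A = (\ell_1,\ldots,\ell_{d+1},\,f,\,g_1,\ldots,g_m)$, and by \Cref{deggcdprop} each $g_i$ is bihomogeneous of bidegree $(m-i,\,i(d-1))$. Hence, modulo $(x_1,\ldots,x_{d+1})$, the generators $\ell_1,\ldots,\ell_{d+1}$ (which are linear in the $x_j$), $f$, and $g_1,\ldots,g_{m-1}$ all reduce to $0$, whereas $g_m$ has bidegree $(0,\,m(d-1))$ and therefore already belongs to $k[T_1,\ldots,T_{d+1}]$. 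Thus $\A + (x_1,\ldots,x_{d+1}) = (x_1,\ldots,x_{d+1}) + (g_m)$ and $\ff(I) \cong k[T_1,\ldots,T_{d+1}]/(g_m)$; taking $\mathfrak{f} = g_m$, which is nonzero by the first part, we read off $\deg \mathfrak{f} = m(d-1)$.

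I expect the only delicate point — really bookkeeping rather than a genuine obstacle — to be confirming that the relation $\widetilde{x_{d+1}}\,\widetilde{g_i} = \widetilde{g_{i-1}}\,\widetilde{g'}$ holds for the matrices $\BB_i$ produced by \Cref{gcdit} at \emph{all} indices $1 \le i \le m$, including the base case $g_0 = f$; this in turn uses that $\partial g_{i-1}$ is defined, i.e. that $g_{i-1}$ has strictly positive $x$-degree for $i \le m$, which is exactly \Cref{deggcdprop}. Once this identity is in place the induction closes at once, and the degree bookkeeping in the second part is routine.
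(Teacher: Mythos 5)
Your proposal is correct. For the second assertion (the description of $\ff(I)$) your argument is essentially the paper's: reduce $\A=\BL_m+(g_m)$ modulo $(x_1,\ldots,x_{d+1})$, observe via \Cref{deggcdprop} that every generator except $g_m$ has positive $x$-degree, and conclude $\ff(I)\cong k[T_1,\ldots,T_{d+1}]/(g_m)$. For the first assertion you take a genuinely different route. The paper argues top-down from \Cref{mainresult}: since $\BL_m\subseteq(x_1,\ldots,x_{d+1})$ while $\A\nsubseteq(x_1,\ldots,x_{d+1})$ (the analytic spread comparison $\ell(I)=d<d+1=\ell(J)$ from the proof of \Cref{DandA}), the last generator $g_m$ cannot lie in $(x_1,\ldots,x_{d+1})$, hence is nonzero, and nonvanishing of $g_m$ forces nonvanishing of all earlier $g_i$. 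You instead argue bottom-up, extracting from the proof of \Cref{equal} the Cramer's-rule identity $\widetilde{x_{d+1}}\,\widetilde{g_i}=\widetilde{g_{i-1}}\,\widetilde{g'}$ and propagating nonvanishing forward in the domain $\rr(J)$ from $\widetilde{f}\neq 0$ and $\widetilde{g'}\neq 0$; this is legitimate since that identity is derived purely from Cramer's rule, \Cref{JDminors}, \Cref{gcds}, and the primality of $\H$, with no dependence on the corollary. Your route has the mild advantage of not needing \Cref{mainresult} (or the analytic spread computation) for the nonvanishing claim, at the cost of leaning on an identity buried in an earlier proof rather than on a stated result; the paper's route is shorter given that \Cref{mainresult} is already in hand.
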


\begin{proof}
Recall from \Cref{gcdit}, if $\gcd I_{d+1}(\BB_i) = 0$ for any $1\leq i\leq m$, then $\gcd I_{d+1}(\BB_j) =0$ for all $i\leq j\leq m$. Thus it suffices to show that $\gcd I_{d+1}(\BB_m) \neq 0$ to verify the first statement. By \Cref{mainresult}, we have $\A=\BL_m +(\gcd I_{d+1}(\BB_m))$ and we note that $\BL_m \subset (x_1,\ldots,x_{d+1})$, which follows from \Cref{gcdit} and \Cref{deggcdprop}. Hence $(x_1,\ldots,x_{d+1}) +\A = (x_1,\ldots,x_{d+1}) + (\gcd I_{d+1}(\BB_m))$ and so $\gcd I_{d+1}(\BB_m) \notin (x_1,\ldots,x_{d+1})$ since $\A \nsubseteq (x_1,\ldots,x_{d+1})$, as noted in the proof of \Cref{DandA}. In particular, $\gcd I_{d+1}(\BB_m)$ is nonzero, as claimed.

Again, noting that $(x_1,\ldots,x_{d+1}) +\A = (x_1,\ldots,x_{d+1}) + (\gcd I_{d+1}(\BB_m))$ and $\gcd I_{d+1}(\BB_m)\neq 0$, it follows from \Cref{deggcdprop} that $\gcd I_{d+1}(\BB_m)$ is of bidegree $(0,m(d-1))$. Hence $\gcd I_{d+1}(\BB_m)$ is the only equation of $\A$ contained in $k[T_1,\ldots,T_{d+1}]$, i.e. it is the only fiber equation. Thus modulo $(x_1,\ldots,x_{d+1}) +\A$, we see that $\ff(I)$ is indeed a hypersurface ring defined by an equation of degree $m(d-1)$.
\end{proof}

By \Cref{mainresult}, the method of gcd-iterations gives a generating set of $\A$. We now claim this is a minimal generating set. Recall that the \textit{relation type} of $I$ is the maximum degree, with respect to $T_1,\ldots,T_{d+1}$, of a minimal generator of the defining ideal $\J$ of $\rr(I)$. It is denoted by $\rt(I)$.

\begin{proposition}\label{mingens}
In the setting of \Cref{mainresult}, the generating set of $\A= \BL_m + (\gcd I_{d+1}(\BB_m))$ obtained from \Cref{gcdit} is minimal. In particular, $\mu(\A) = d+m+2$ and $\mu(\J) = d+m+1$. Moreover, the relation type of $I$ is $\rt(I) =m(d-1)$.
\end{proposition}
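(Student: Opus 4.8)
The plan is to show that the generating set $\{\ell_1,\ldots,\ell_{d+1},f,g_1,\ldots,g_{m-1},g_m\}$ of $\A$ (where $g_i=\gcd I_{d+1}(\BB_i)$ and $g_m=\gcd I_{d+1}(\BB_m)$) cannot be shortened, by exploiting the bigrading on $S[T_1,\ldots,T_{d+1}]$ established in \Cref{deggcdprop}. The key observation is that the listed generators occupy distinct bidegrees: $\ell_1,\ldots,\ell_{d+1}$ have bidegree $(1,1)$, the element $f$ has bidegree $(m,0)$, and each $g_i$ has bidegree $(m-i,i(d-1))$ by \Cref{deggcdprop} (these are nonzero by \Cref{gcdsnotzero}). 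Since no two of $f,g_1,\ldots,g_m$ share a bidegree and none shares a bidegree with the $\ell_j$'s (the first components $m,m-1,\ldots,0$ are all distinct, and the second components $0,d-1,2(d-1),\ldots,m(d-1)$ are strictly increasing since $d\geq 4$), minimality reduces to checking that within each fixed bidegree the relevant elements are part of a minimal generating set.

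First I would handle the linear part: since $\varphi=\overline\psi$ minimally presents $I$ with $\mu(I)=d+1$, the forms $\ell_1,\ldots,\ell_{d+1}$ are $R$-linearly independent modulo $(x_1,\ldots,x_{d+1})^2R[T_1,\ldots,T_{d+1}]$, hence they are needed as generators; equivalently, work modulo the ideal generated by all elements of strictly larger $T$-degree and by $\m^2$, and invoke that $\LL$ is minimally generated by $d+1$ elements because $\varphi$ is a minimal presentation. Next, $f$ is needed because it is the unique generator of bidegree $(m,0)$ and $f\notin(\ell_1,\ldots,\ell_{d+1})+\sum_{j\neq\text{relevant}}(\text{others})$ — here one argues that no generator of bidegree $(m,0)$ can be produced from the $\ell_j$ (which would require a coefficient of bidegree $(m-1,-1)$, impossible) nor from the $g_i$ (whose $T$-degrees are positive), so $f$ must appear. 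Finally, for each $g_i$ with $1\le i\le m$: suppose $g_i\in\BL_i=(\ell_1,\ldots,\ell_{d+1},f,g_1,\ldots,g_{i-1})$; by bidegree reasons the only generators in $\BL_i$ of $T$-degree $\le i(d-1)$ that can contribute are $\ell_1,\ldots,\ell_{d+1}$ (with a coefficient of bidegree $(m-i-1,i(d-1)-1)$) together with lower $g_j$'s (with coefficients in bidegree $(j-i\text{-adjusted})$). One then derives a contradiction by passing to $\rr(J)$ using \Cref{DandA} and \Cref{equal}: we have $\widetilde{g_i}=\frac{\widetilde f\,\widetilde{g'}^{\,i}}{\widetilde{x_{d+1}}^{\,i}}$ up to units (iterating the identity $\widetilde{x_{d+1}}\widetilde{g_i}=\widetilde{g_{i-1}}\widetilde{g'}$ from the proof of \Cref{equal}), and the number of generators of $\widetilde{\K}^m=\widetilde{\K}^{(m)}$, or rather of the chain $D_1\subsetneq D_2\subsetneq\cdots\subsetneq D_m$, must genuinely grow at each step because $\widetilde{g'}$ is a nonzerodivisor and $\widetilde{\K}$ is not principal.

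Cleaner still, I would count Betti numbers directly. Since $\A=\BL_m+(g_m)$ and the quotient $\rr(I)\cong S[T_1,\ldots,T_{d+1}]/\A$ has a known structure, compute $\mu(\A)$ as $\dim_k(\A/\m_{S[T]}\A)$ where $\m_{S[T]}=(x_1,\ldots,x_{d+1},T_1,\ldots,T_{d+1})$; using bihomogeneity this splits as a sum over bidegrees, and each listed generator contributes exactly one dimension to its own bidegree-graded piece provided that piece of $\m_{S[T]}\A$ does not already account for it — which again follows because the generator of smallest total degree in any occupied bidegree cannot lie in $\m_{S[T]}\A$. This yields $\mu(\A)=(d+1)+1+m=d+m+2$, and then $\mu(\J)=\mu(\A)-1=d+m+1$ since $\J=\overline\A=\A/(f)$ and $f$ is a minimal generator of $\A$ that becomes zero in $R[T_1,\ldots,T_{d+1}]$. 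The relation type is read off as the largest $T$-degree among the minimal generators: the $\ell_j$ have $T$-degree $1$ and each $g_i$ has $T$-degree $i(d-1)$, maximized at $i=m$, giving $\rt(I)=m(d-1)$; this also matches \Cref{gcdsnotzero}, where $\mathfrak f=g_m$ has degree $m(d-1)$.

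The main obstacle will be the step showing no $g_i$ is redundant, i.e. $g_i\notin\BL_i$. The bidegree bookkeeping narrows the possible "syzygies" expressing $g_i$ in terms of earlier generators, but ruling them all out requires a genuine argument — the natural one is to pass to the domain $\rr(J)$, track the fractional ideals $D_i=\frac{\widetilde f\,\widetilde\K^{\,i}}{\widetilde{x_{d+1}}^{\,i}}$ from \Cref{equal}, and show the inclusions $D_{i-1}\subsetneq D_i$ are strict because $\widetilde\K$ is not invertible (being a genuine height-one, non-principal, strongly Cohen–Macaulay almost complete intersection by \Cref{Kscm}). Establishing that strictness, and translating it back through \Cref{equal} into the statement that $g_i$ adds a new minimal generator at step $i$, is the technical heart of the proof.
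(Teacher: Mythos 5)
Your framework is the paper's: use the bigrading to show that any dependence among $\ell_1,\ldots,\ell_{d+1},f,g_1,\ldots,g_m$ can only happen within bidegree $(1,1)$ or by forcing $g_i\in\H=(\ell_1,\ldots,\ell_{d+1})$ for some $i\leq m-1$ (note that in fact neither $f$ nor the lower $g_j$'s can contribute to $g_i$ either, since the required coefficients would have a negative component of bidegree). Where you diverge is at the crux, which you correctly identify but leave open: you propose to establish strictness of $D_{i-1}\subsetneq D_i$ in $\rr(J)$ ``because $\widetilde{\K}$ is not invertible,'' without proving it. That route does work, but non-invertibility is not the right reason; the closing move, which you already have in hand, is the identity $\widetilde{x_{d+1}}^{\,i}\,\widetilde{g_i}=\widetilde{f}\cdot\widetilde{g'}^{\,i}$ obtained by iterating $\widetilde{x_{d+1}}\,\widetilde{g_i}=\widetilde{g_{i-1}}\,\widetilde{g'}$ from the proof of \Cref{equal}. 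Since $\rr(J)$ is a domain and $\widetilde{f}\neq 0$, $\widetilde{g'}\neq 0$, this gives $\widetilde{g_i}\neq 0$, i.e.\ $g_i\notin\H$, which is exactly what the degree count asks you to rule out; strictness of $D_{i-1}\subsetneq D_i$ follows as well, since $\widetilde{\BL_i}=D_{i-1}$ and $D_i=\widetilde{\BL_i}+(\widetilde{g_i})$. The paper instead argues internally to the algorithm: if $g_i\in\H$, then by \Cref{gcdwd} the column $\partial g_i$ may be taken to be a combination of the columns of $B(\psi)$, whence $I_{d+1}(\BB_{i+1})=I_{d+1}(B(\psi))=0$ and $g_{i+1}=0$, contradicting \Cref{gcdsnotzero}. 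Either argument is fine; yours trades the well-definedness lemma for the factorization in $\rr(J)$.

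One further gap is your treatment of the $\ell_j$'s when $m=1$: there $\deg f=(1,0)$, so a term $b\cdot f$ with $\deg b=(0,1)$ can a priori contribute in bidegree $(1,1)$, and reducing modulo $\m^2$ does not kill it (nor does passing to $R[T_1,\ldots,T_{d+1}]$ settle minimality in $\A$ rather than in $\overline{\LL}$). The paper disposes of this case using that $\H$ is prime (\Cref{PropertiesOfA}), so $b\cdot f\in\H$ would force $b\in\H$ or $f\in\H$, both impossible by degree. With these two points repaired, your counts $\mu(\A)=d+m+2$, $\mu(\J)=d+m+1$ and the relation type $\rt(I)=m(d-1)$ follow as you state.
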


\begin{proof}
The generating set of $\A=\BL_m + (\gcd I_{d+1}(\BB_m))$ obtained from the method of gcd-iterations is $\{\ell_1,\ldots,\ell_{d+1},f,g_1,\ldots,g_m\}$ where $g_i = \gcd I_{d+1}(\BB_i)$, for $\BB_i$ a matrix as in \Cref{gcdit}. Recall that $\deg \ell_i = (1,1)$ for $1\leq i\leq d+1$, as $\psi$ consists of linear entries in $S$. Additionally, recall that $\deg f = (m,0)$. Moreover, by \Cref{gcdsnotzero} and \Cref{deggcdprop}, we have $\deg g_i = (m-i,i(d-1))$ for $1\leq i\leq m$. With this, we show that the generating set above is minimal by showing that none of these generators can be expressed in terms of the others.

First, recall that the ideal $\H =(\ell_1,\ldots,\ell_{d+1})$, as in \Cref{notation2}, is the ideal defining $\sym(J)$. As $[\ell_1 \ldots \ell_{d+1}]= [T_1 \ldots T_{d+1}]  \cdot \psi$ and $\psi$ minimally presents $J$, it follows that $\ell_1,\ldots,\ell_{d+1}$ minimally generate $\H$. To see that these are minimal generators of $\A$ as well, suppose not for a contradiction and, without loss of generality, assume $\ell_1$ is a non-minimal generator of $\A$. Thus $\ell_1$ can be written as a combination of the remaining generators $\ell_2,\ldots,\ell_{d+1},f,g_1,\ldots,g_m$. 

If $m\geq 2$, by degree considerations in both components of the bigrading, it then follows that $\ell_1$ can be written in terms of $\ell_2,\ldots,\ell_{d+1}$, which is a contradiction as $\ell_1,\ldots,\ell_{d+1}$ minimally generate $\H$. In the case $m=1$, by degree considerations once more, $\ell_1$ can then be expressed in terms of $\ell_2,\ldots,\ell_{d+1},f$. If $\ell_1 \in ( \ell_2,\ldots,\ell_{d+1})$, we achieve the same contradiction, hence it follows that there is some element $b$ of bidegree $(0,1)$ such that $b\cdot f \in \H =(\ell_1,\ldots,\ell_{d+1})$. However, recall from \Cref{PropertiesOfA} that $\H$ is a prime ideal, hence either $b\in \H$ or $f\in \H$, both of which are impossible by degree considerations.

A similar argument shows that $f$ and $g_m$ are minimal generators, as they have bidegrees $(m,0)$ and $(0,m(d-1))$ respectively. 
Now suppose that $g_i$ is a non-minimal generator of $\A$ for some $i\leq m-1$, in which case there are higher-order iterations. Writing $g_i$ as a combination of the remaining generators, by degree considerations in both components of the bigrading, it then follows that $g_i\in (\ell_1,\ldots,\ell_{d+1})$. Thus the column $\partial g_i$ in \Cref{gcdit} can be taken as a combination of the columns of $B(\psi)$ by \Cref{gcdwd}. Hence $I_{d+1}(\BB_{i+1}) = I_{d+1}(B(\psi)) = 0$ by \Cref{detBpsizero}, and so $g_{i+1} = 0$ which is a contradiction by \Cref{gcdsnotzero}.

The claim regarding $\J$ then follows as $f$ is a minimal generator of $\A$. Lastly, the relation type of $I$ is seen to be $\rt(I)=m(d-1)$, as $\deg g_m = (0,m(d-1))$.
\end{proof}

We now provide an example that illustrates the process of gcd-iterations, in order to provide a minimal generating set of $\A$. We remark that it is quite simple to perform this algorithm in a computer algebra system, such as \textit{Macaulay2} \cite{M2}.

\begin{example}
For $k$ an infinite field, let $S= k[x_1,x_2,x_3,x_4,x_5]$, $f=x_5^3$, and $R=S/(f)$. Consider the matrix $\varphi$, with entries in $R$, and its counterpart $\psi$ with entries in $S$:
\[\varphi =\begin{bmatrix}
0&\overline{x_1} & \overline{x_2} & 0 & \overline{x_4} \\[0.5ex]
-\overline{x_1}&0&\overline{x_4}& 0& \overline{x_3} \\[0.5ex]
-\overline{x_2}&-\overline{x_4}&0&\overline{x_1}& \overline{x_5}\\[0.5ex]
0&0&-\overline{x_1}&0&\overline{x_2}\\[0.5ex]
-\overline{x_4}&-\overline{x_3}&-\overline{x_5}&-\overline{x_2}&0
\end{bmatrix}
\hspace{10mm}
\psi =\begin{bmatrix}
0&x_1 & x_2 & 0 & x_4 \\[0.5ex]
-x_1&0&x_4& 0& x_3 \\[0.5ex]
-x_2&-x_4&0&x_1& x_5\\[0.5ex]
0&0&-x_1&0&x_2\\[0.5ex]
-x_4&-x_3&-x_5&-x_2&0
\end{bmatrix}
\]
where $\overline{\,\cdot\,}$ denotes images modulo $(f)$. A simple computation shows that $\hgt \pf_4(\varphi) \geq 3$, hence $I=\pf_4(\varphi)$ is a perfect Gorenstein $R$-ideal of grade $3$ by \cite[2.1]{BE}. Moreover, $I$ satisfies $G_4$ since $\hgt I_2(\varphi) = 4$, $\hgt I_3(\varphi) =3$, and $\hgt I_4(\varphi) = 3\geq 2$, which can be checked easily. Thus the assumptions of \Cref{mainresult} are met.

The Jacobian dual of $\psi$ which consists of entries in $k[T_1,T_2,T_3,T_4,T_5]$ is
\[
B(\psi) = \begin{bmatrix}
-T_2 & T_1 & -T_4 & T_3 & 0 \\[0.5ex]
-T_3 & 0 & T_1 & -T_5 & T_4\\[0.5ex]
0 & -T_5 & 0 & 0 & T_2\\[0.5ex]
-T_5 & -T_3 & T_2 & 0 & T_1\\[0.5ex]
0 & 0 & - T_5 & 0 & T_3
\end{bmatrix}
\]
hence we may construct the modified Jacobian dual and perform the method of gcd-iterations. For the purposes of notation, let $W=T_1T_3T_5 -T_2T_3^2 -T_2^2T_5 - T_4T_5^2$, which happens to be $W = \gcd I_4(B')$, following \Cref{notation2}. Following \Cref{gcdit}, we obtain
\[
\begin{array}{lll}
  \BB_1 =  \begin{bmatrix}
-T_2 & T_1 & -T_4 & T_3 & 0 &0 \\[0.5ex]
-T_3 & 0 & T_1 & -T_5 & T_4 &0\\[0.5ex]
0 & -T_5 & 0 & 0 & T_2 & 0\\[0.5ex]
-T_5 & -T_3 & T_2 & 0 & T_1 &0\\[0.5ex]
0 & 0 & - T_5 & 0 & T_3 &x_5^2
\end{bmatrix} & \quad & g_1 = \gcd I_5(\BB_1)= x_5^2 W\\
\, \\
\BB_2 =  \begin{bmatrix}
-T_2 & T_1 & -T_4 & T_3 & 0 &0 \\[0.5ex]
-T_3 & 0 & T_1 & -T_5 & T_4 &0\\[0.5ex]
0 & -T_5 & 0 & 0 & T_2 & 0\\[0.5ex]
-T_5 & -T_3 & T_2 & 0 & T_1 &0\\[0.5ex]
0 & 0 & - T_5 & 0 & T_3 &x_5 W
\end{bmatrix} & \quad & g_2 = \gcd I_5(\BB_2)= x_5 W^2\\
\, \\
\BB_3 =  \begin{bmatrix}
-T_2 & T_1 & -T_4 & T_3 & 0 &0 \\[0.5ex]
-T_3 & 0 & T_1 & -T_5 & T_4 &0\\[0.5ex]
0 & -T_5 & 0 & 0 & T_2 & 0\\[0.5ex]
-T_5 & -T_3 & T_2 & 0 & T_1 &0\\[0.5ex]
0 & 0 & - T_5 & 0 & T_3 & W^2
\end{bmatrix} & \quad & g_3 = \gcd I_5(\BB_2)= W^3,
\end{array}
\]
where the greatest common divisors of the minors occur as in \Cref{gcds}.

By \Cref{mainresult}, we have $\A= \LL + (g_1,g_2,g_3)$ and the defining ideal of $\rr(I)$ is $\J = \overline{\LL + (g_1,g_2,g_3)}$. Notice that $\A$ and $\J$ are not prime ideals, which is to be expected as $R$, and hence $\rr(I)$, is not a domain.
\end{example}

\begin{remark}\label{f linear case}
We note that \Cref{mainresult} recovers \Cref{Moreyresult} when $m=1$. After a change of coordinates, it can be assumed that the factored equation $f$ is one of the indeterminates, say $f=x_{d+1}$. Thus $R\cong k[x_1,\ldots,x_d] =S'$ and so $I$ and $J'$ are the same ideal with $\varphi = \psi'$, following the notation in \Cref{J'ideal}. Recall that the submatrix $B'$ of $B(\psi)$, as in \Cref{notation2}, is the Jacobian dual $B'=B(\psi')$, with respect to $x_1,\ldots,x_d$. Moreover, the modified Jacobian dual $B=[B(\psi)\,|\,\partial f]$ is unique in this case and the column $\partial f$ consists of all zeros except for a 1 in the last entry. Thus the greatest common divisor of the minors of $B$, the first and only gcd-iteration, is exactly the greatest common divisor of the minors of $B' = B(\psi')$.
\end{remark}

\subsection{Depth and Cohen-Macaulayness}\label{depthsec}

In the setting of \Cref{mainresult}, we now study the Cohen-Macaulay property of the Rees ring $\rr(I)$, using the isomorphism $\rr(I) \cong S[T_1,\ldots,T_{d+1}]/\A$. We continue to follow section 5 of \cite{BM} and begin by creating a handful of short exact sequences which will be useful not only to study the depth of $\rr(I)$, but also its regularity with respect to various gradings.

Let $\m =(x_1,\ldots,x_{d+1})$ and recall that $\widetilde{\K} = (\widetilde{g'},\widetilde{x_{d+1}})$, where $g'= \gcd I_d(B')$. Additionally, recall that $\widetilde{\K}$ is a strongly Cohen-Macaulay ideal by \Cref{Kscm} and $\m \rr(J) = (\widetilde{x_{d+1}}):\widetilde{\K}$ by \Cref{colons}. Thus there is a short exact sequence of bigraded $\rr(J)$-modules
\[
0\longrightarrow \m \rr(J) (0,-(d-1)) \longrightarrow \rr(J)(-1,0)\oplus \rr(J)(0,-(d-1))\longrightarrow \widetilde{\K} \longrightarrow 0.
\]

Consider the induced sequence obtained by applying the functor $\sym(-)$ to the sequence above. Taking the $m^{\text{th}}$ graded strand, we obtain
$$\hspace{10mm}\m \rr(J) (0,-(d-1)) \otimes \text{Sym}_{m-1}\big(\rr(J)(-1,0)\oplus \rr(J)(0,-(d-1))\big)\overset{\sigma}{\longrightarrow}\hspace{15mm}$$ 
$$\hspace{20mm}\text{Sym}_{m}\big(\rr(J)(-1,0)\oplus \rr(J)(0,-(d-1))\big) \longrightarrow \text{Sym}_m(\widetilde{\K}) \longrightarrow 0.$$
Notice that $\ker \sigma$ is torsion due to rank considerations. However, it is a submodule of a torsion-free $\rr(J)$-module, hence it must vanish. Thus $\sigma$ is injective and we have the short exact sequence
$$0\longrightarrow \m \rr(J) (0,-(d-1)) \otimes \text{Sym}_{m-1}\big(\rr(J)(-1,0)\oplus \rr(J)(0,-(d-1))\big)\overset{\sigma}{\longrightarrow}\hspace{15mm}$$ 
$$\hspace{20mm}\text{Sym}_{m}\big(\rr(J)(-1,0)\oplus \rr(J)(0,-(d-1))\big) \longrightarrow \text{Sym}_m(\widetilde{\K}) \longrightarrow 0.$$
As $\widetilde{\K} = (\widetilde{g'},\widetilde{x_{d+1}})$, the proof of \Cref{Kscm} shows that $\widetilde{\K}$ satisfies $G_\infty$. As $\widetilde{\K}$ is strongly Cohen-Macaulay, by \cite[2.6]{HSV1} it is an ideal of linear type, hence $\sym_m(\widetilde{\K}) \cong \widetilde{\K}^m$. Thus the short exact sequence above can be read as
\begin{equation}\label{directsumseq}
0\rightarrow \displaystyle{\bigoplus_{i=0}^{m-1}}\,\m \rr(J)\big(-i,-(m-i)(d-1)\big)\rightarrow \displaystyle{\bigoplus_{i=0}^{m}}\, \rr(J)\big(-i,-(m-i)(d-1)\big) \rightarrow \widetilde{\K}^m\rightarrow 0.
\end{equation}

We are now ready to compute the depths of $\rr(I)$, $\ff(I)$, and $\gr(I)$. Recall that a Noetherian local ring $A$ is said to be \textit{almost} Cohen-Macaulay if $\dep A \geq \dim A-1$.

\begin{theorem}\label{depth}
In the setting of \Cref{mainresult}, 
\begin{enumerate}[(a)]
\setlength\itemsep{1em}
    \item The Rees algebra $\rr(I)$ is almost Cohen-Macaulay. Moreover, $\rr(I)$ is Cohen-Macaulay if and only if $m=1$.
    
    \item The associated graded ring $\gr(I)$ is almost Cohen-Macaulay. Moreover, $\gr(I)$ is Cohen-Macaulay if and only if $m=1$.
    
    \item The special fiber ring $\ff(I)$ is Cohen-Macaulay.
\end{enumerate}
\end{theorem}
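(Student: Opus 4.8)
The plan is to deduce all three parts from the structural results already established, chiefly the identification $\rr(I)\cong\rr(J)/\widetilde{\A}$ and the short exact sequence~\eqref{directsumseq}. I will use freely that $\rr(J)$ is Cohen-Macaulay of dimension $d+2$, that $\dim\rr(I)=d+1$, $\dim\gr(I)=\dim R=d$, $\dim\ff(I)=\ell(I)=d$, and that $R$ is Cohen-Macaulay with $\dep R=d$. Part~(c) requires no new work: by \Cref{gcdsnotzero}, $\ff(I)\cong k[T_1,\ldots,T_{d+1}]/(\mathfrak{f})$ with $\deg\mathfrak{f}=m(d-1)>0$, so $\ff(I)$ is a hypersurface ring and therefore Cohen-Macaulay.

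For part~(a), I would first note that multiplication by $\widetilde{f}/\widetilde{x_{d+1}}^m$ is a degree-preserving isomorphism $\widetilde{\A}\cong\widetilde{\K}^{m}$ of bigraded $\rr(J)$-modules (\Cref{DandA}, \Cref{mainresult}), so it is enough to compute $\dep_{\rr(J)}\widetilde{\K}^m$. Since $\m\rr(J)=(\widetilde{x_1,\ldots,x_{d+1}})$ is a Cohen-Macaulay $\rr(J)$-ideal of height one by \Cref{PropertiesOfA}, it is a maximal Cohen-Macaulay $\rr(J)$-module---indeed $\rr(J)/\m\rr(J)\cong k[\T]$ is Cohen-Macaulay of dimension $d+1$---as is $\rr(J)$ itself; applying the depth lemma to~\eqref{directsumseq} then gives $\dep\widetilde{\K}^m\ge d+1$. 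If $m=1$ then $\widetilde{\K}^1=\widetilde{\K}$ is a Cohen-Macaulay ideal of height one, hence maximal Cohen-Macaulay, and $0\to\widetilde{\A}\to\rr(J)\to\rr(I)\to0$ forces $\dep\rr(I)=d+1$, so $\rr(I)$ is Cohen-Macaulay (in agreement with \Cref{Moreyresult} via \Cref{f linear case}). If $m\ge2$, the target is $\dep\widetilde{\K}^m=d+1$, equivalently $H^{d+1}_{\mathfrak{M}}(\widetilde{\K}^m)\ne0$ for $\mathfrak{M}$ the bigraded maximal ideal of $\rr(J)$. Passing to local cohomology in~\eqref{directsumseq}, and using that $\rr(J)$ is maximal Cohen-Macaulay, identifies $H^{d+1}_{\mathfrak{M}}(\widetilde{\K}^m)$ with the kernel of an induced map between a direct sum of $m$ shifted copies of $H^{d+2}_{\mathfrak{M}}(\m\rr(J))$ and a direct sum of $m+1$ shifted copies of $H^{d+2}_{\mathfrak{M}}(\rr(J))$; feeding in the exact sequence $0\to H^{d+1}_{\mathfrak{M}}(\ff(J))\to H^{d+2}_{\mathfrak{M}}(\m\rr(J))\to H^{d+2}_{\mathfrak{M}}(\rr(J))\to0$ coming from $0\to\m\rr(J)\to\rr(J)\to\ff(J)\to0$, one reduces the non-vanishing of this kernel to a bidegree analysis that works out precisely because there are $m\ge2$ summands on the left. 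Granting this, $0\to\widetilde{\A}\to\rr(J)\to\rr(I)\to0$ gives $\dep\rr(I)\ge\min\{d+2,d\}=d=\dim\rr(I)-1$, so $\rr(I)$ is almost Cohen-Macaulay, and it fails to be Cohen-Macaulay because $\dep\rr(I)=d+1$ would force $\dep\widetilde{\A}=d+2$ by the depth lemma.

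For part~(b), I would use the standard exact sequences $0\to\rr(I)_+\to\rr(I)\to R\to0$ and $0\to\rr(I)_+(1)\to\rr(I)\to\gr(I)\to0$, the latter via $I\rr(I)\cong\rr(I)_+(1)$. When $m=1$, Cohen-Macaulayness of $\rr(I)$ and of $R$ propagates through these to give $\dep\rr(I)_+\ge d+1$ and then $\dep\gr(I)\ge d=\dim\gr(I)$, so $\gr(I)$ is Cohen-Macaulay. When $m\ge2$, part~(a) gives $\dep\rr(I)=d$, so the first sequence yields $\dep\rr(I)_+\ge d$ and the second yields $\dep\gr(I)\ge d-1=\dim\gr(I)-1$, i.e.\ $\gr(I)$ is almost Cohen-Macaulay; to see that it is not Cohen-Macaulay I would prove $H^{d-1}_{\mathfrak{N}}(\gr(I))\ne0$, where $\mathfrak{N}$ is the homogeneous maximal ideal of $\rr(I)$, which by the long exact sequence of the second sequence together with $\dep\rr(I)=d$ equals $\ker\big(H^{d}_{\mathfrak{N}}(\rr(I)_+)(1)\to H^{d}_{\mathfrak{N}}(\rr(I))\big)$.

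I expect the crux to be exactly the non-vanishing bookkeeping for $m\ge2$ in parts~(a) and~(b): verifying $H^{d+1}_{\mathfrak{M}}(\widetilde{\K}^m)\ne0$ and $H^{d-1}_{\mathfrak{N}}(\gr(I))\ne0$. These should come out of a careful analysis of the bigraded strands of the modules in~\eqref{directsumseq}, the relevant numerical input being the degree $m(d-1)$ of the fiber equation (\Cref{gcdsnotzero}, \Cref{mingens}) weighed against the shifts $(-i,-(m-i)(d-1))$ in~\eqref{directsumseq}. Alternatively, for part~(a) one can sidestep local cohomology by using the linkage of $\widetilde{\K}^{(m)}=\widetilde{\K}^m$ with $\m^m\rr(J)$ through the complete intersection $(\widetilde{x_{d+1}}^m)$ recorded in \Cref{colons}, which reduces the question to whether $\rr(J)/\m^m\rr(J)$ is Cohen-Macaulay.
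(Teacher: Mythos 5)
Your argument for the positive assertions essentially coincides with the paper's: the same depth chase through $0\to\m\rr(J)\to\rr(J)\to\ff(J)\to 0$ gives $\dep\m\rr(J)=d+2$, the sequence (\ref{directsumseq}) gives $\dep\widetilde{\K}^m\geq d+1$, and $0\to\widetilde{\A}\to\rr(J)\to\rr(I)\to 0$ with $\widetilde{\A}\cong\widetilde{\K}^m$ gives $\dep\rr(I)\geq d$; part (c) is identical, and your treatment of the $m=1$ case and of the almost Cohen--Macaulayness of $\gr(I)$ via the two standard sequences $0\to\rr(I)_+\to\rr(I)\to R\to 0$ and $0\to\rr(I)_+(1)\to\rr(I)\to\gr(I)\to 0$ is a perfectly good substitute for the paper's appeals to \Cref{f linear case} and to Huckaba--Marley.

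The genuine gap is the ``only if'' direction: you never actually prove that $\rr(I)$ (hence $\gr(I)$) fails to be Cohen--Macaulay when $m\geq 2$. Your main route requires showing $H^{d+1}(\widetilde{\K}^m)\neq 0$ with respect to the bigraded maximal ideal, and you explicitly write ``granting this'' for the decisive bidegree computation; your fallback via linkage only relocates the question to whether $\rr(J)/\m^m\rr(J)$ is Cohen--Macaulay, which you also leave unanswered. This is not a routine verification --- all the exact sequences in play only bound depths from below, so no amount of depth-lemma bookkeeping alone will produce the required \emph{upper} bound $\dep\rr(I)\leq d$. The paper closes this direction by citing the Polini--Ulrich criterion \cite[3.1]{PU} for Cohen--Macaulayness of blowup algebras (which, given the data already established here --- $\ell(I)=d$, the reduction number forced by the fiber equation of degree $m(d-1)$ from \Cref{gcdsnotzero}, and the relation type $m(d-1)$ from \Cref{mingens} --- rules out Cohen--Macaulayness for $m\geq 2$ and also yields the equivalence for $\gr(I)$ in part (b)). To complete your proof you would either need to invoke such a criterion or genuinely carry out the local cohomology non-vanishing you sketch.
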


\begin{proof}
Consider the short exact sequence
\begin{equation}\label{regseq1}
0 \longrightarrow\m \rr(J) \longrightarrow \rr(J) \longrightarrow \ff(J)\longrightarrow 0
\end{equation}
and recall that $\rr(J)$ is a Cohen-Macaulay domain of dimension $d+2$ by \Cref{PropertiesOfA}. Moreover, recall that $J$ is of linear type by \Cref{Jlineartype}, hence $\ff(J) \cong k[T_1,\ldots,T_{d+1}]$. Comparing the depths of the $\rr(J)$-modules in (\ref{regseq1}), it follows that $\dep \m \rr(J) \geq d+2$ and so $\dep \m \rr(J) =d+2$, as this is the maximum possible depth. This together with (\ref{directsumseq}) shows that $\dep \widetilde{\K}^m \geq d+1$. 

We also have the short exact sequence
\begin{equation}\label{regseq2}
0 \longrightarrow \widetilde{\A} \longrightarrow \rr(J) \longrightarrow  \rr(I) \longrightarrow 0
\end{equation}
and we note that $\widetilde{\A} = \frac{\widetilde{f}\widetilde{\K}^{(m)}}{\widetilde{x_{d+1}}^n} =\frac{\widetilde{f}\widetilde{\K}^{m}}{\widetilde{x_{d+1}}^n} \cong \widetilde{\K}^m$ by \Cref{DandA}, \Cref{mainresult}, and \Cref{powersym}. Comparing the depths of the modules in (\ref{regseq2}), it follows that $\dep \rr(I) \geq d$, hence $\rr(I)$ is almost Cohen-Macaulay. The Cohen-Macaulayness in the case $m=1$ follows from \Cref{f linear case} and \Cref{Moreyresult}. If $m\geq 2$, then $\rr(I)$ is not Cohen-Macaulay by \cite[3.1]{PU}. Thus $\dep \rr(I) =d$ in this case, which shows part (a).

For the proceeding part, we note that $\gr(I)$ is certainly almost Cohen-Macaulay if it is Cohen-Macaulay. In the case that $\gr(I)$ is not Cohen-Macaulay, we have $\dep \gr(I) \geq \dep \rr(I) -1 \geq d-1$ by \cite[3.12]{HM} and (a). Thus $\gr(I)$ is almost Cohen-Macaulay. The last assertion of (b) now follows from (a) and \cite[3.1]{PU}.

The assertion on the Cohen-Macaulayness of $\ff(I)$ in (c) is clear since it is a hypersurface ring by \Cref{gcdsnotzero}.
\end{proof}

\subsection{Regularity}

We now consider the the Castelnuovo-Mumford regularity of $\rr(I)$ in the the setting of \Cref{mainresult}. We follow all definitions and conventions as given in \cite{Trung}. Once more, we proceed as in section 5 of \cite{BM} and we note that regularity is easily compared along short exact sequences \cite{Eisenbud}.

Again we use the isomorphism $\rr(I) \cong S[T_1,\ldots,T_{d+1}]/\A$ and we note that there are multiple gradings on $\rr(I)$. We consider its regularity with respect to the gradings represented by the $S[T_1,\ldots,T_{d+1}]$-ideals $\m = (x_1,\ldots,x_{d+1})$, $\tt=(T_1,\ldots,T_{d+1})$, and $\n=(x_1,\ldots,x_{d+1},T_1,\ldots,T_{d+1})$. When computing regularity with respect to $\m$, we set $\deg x_i=1$ and $\deg T_i =0$. Similarly, when computing regularity with respect to $\tt$, we set $\deg x_i=0$ and $\deg T_i =1$. Lastly, when computing regularity with respect to $\n$, we adopt the total grading and set $\deg x_i=1$ and $\deg T_i =1$.

\begin{theorem}\label{rtandreg}
In the setting of \Cref{mainresult}, we have
\[
\begin{array}{ccc}
     {\rm reg}_\tt \rr(I)  = m(d-1)-1, &  {\rm reg}_\m \rr(I)  = m-1, & {\rm reg}_\n \rr(I) \leq (m+1)(d-1)-1.  
\end{array}
\]
Additionally, $\reg \ff(I) = m(d-1)-1$.
\end{theorem}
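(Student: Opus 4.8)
The plan is to compute each regularity by extracting long exact sequences in local cohomology from the two short exact sequences \eqref{regseq1} and \eqref{regseq2} together with the resolution \eqref{directsumseq}, and to track degrees carefully under each of the three gradings. I would first establish the regularity of $\ff(I)$: since $\ff(I) \cong k[T_1,\ldots,T_{d+1}]/(\mathfrak f)$ is a hypersurface ring with $\deg \mathfrak f = m(d-1)$ by \Cref{gcdsnotzero}, its minimal free resolution over $k[T_1,\ldots,T_{d+1}]$ is $0\to k[\T](-m(d-1))\to k[\T]\to \ff(I)\to 0$, so $\reg \ff(I) = m(d-1)-1$ directly. This handles the last assertion and also identifies $\reg_\tt$ along $V(\m)$, which is the bridge to $\reg_\tt \rr(I)$.

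For $\reg_\tt \rr(I)$, I would use \eqref{regseq2}: since $\widetilde{\A} \cong \widetilde{\K}^m$, and \eqref{directsumseq} expresses $\widetilde{\K}^m$ as the cokernel of a map between direct sums of twists of $\rr(J)$ and $\m\rr(J)$, I can compute $\reg_\tt \widetilde{\K}^m$ from $\reg_\tt \rr(J)$ and $\reg_\tt \m\rr(J)$ together with the $T$-degree shifts $(*,-(m-i)(d-1))$ appearing there. Here $\rr(J)$ has $\tt$-regularity $0$ (it is a polynomial extension picture: $\ff(J)\cong k[\T]$ and the $x_i$ form a regular sequence of $T$-degree $0$ on $\rr(J)$, so all higher local cohomology with support in $\tt$ vanishes appropriately), and $\reg_\tt \m\rr(J)$ is controlled by \eqref{regseq1}. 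The largest twist in \eqref{directsumseq} is the summand with $i=0$, contributing $-(m+1)(d-1)$ on the resolving side (since $\m\rr(J)$ itself needs one syzygy, pushing the shift to $(m-0)(d-1)+(d-1)$, i.e. the relation-type bound), which yields $\reg_\tt \rr(I) = m(d-1)-1$; the value $m(d-1)$ is exactly the top $T$-degree of a defining equation (the fiber equation $g_m$), consistent with \Cref{mingens}, and the $-1$ comes from the standard shift between top generator degree and regularity for these almost Cohen-Macaulay modules.

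For $\reg_\m \rr(I)$, I run the same argument but with $\deg x_i = 1$, $\deg T_i = 0$. Now $\rr(J)$ has $\m$-regularity $0$ (again $\ff(J) \cong k[\T]$ is generated in $\m$-degree $0$, and $J$ being of linear type makes the filtration by powers of $I$ behave well), while the $x$-degree shifts in \eqref{directsumseq} are $(-i, *)$, the largest being $i = m$ from $\rr(J)(-m,*)$ on the resolving side and $i = m-1$ from $\m\rr(J)(-(m-1),*)$ which after one syzygy reaches $x$-degree $m$. This gives $\reg_\m \widetilde{\K}^m = m-1$, hence $\reg_\m \rr(I) = m-1$ via \eqref{regseq2}; the value $m$ matches $\deg_x f = m$, the top $x$-degree of a defining equation. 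For $\reg_\n \rr(I)$ with the total grading, I would combine the two one-variable bounds: a standard comparison (e.g. via the bigraded local cohomology spectral sequence, or simply bounding total-degree regularity by the sum of the partial ones adjusted by the structure) yields $\reg_\n \rr(I) \le \reg_\m \rr(I) + \reg_\tt \rr(I) + 1 = (m-1) + (m(d-1)-1) + 1$; but the sharper bound claimed, $(m+1)(d-1)-1$, should instead come from reading \eqref{directsumseq} with the total grading directly, where the worst summand $\rr(J)(-i,-(m-i)(d-1))$ has total degree $i + (m-i)(d-1)$, maximized over $0\le i\le m$ at $i=0$ giving $m(d-1)$, and the extra syzygy on the $\m\rr(J)$ side (total degree shift by $d$) pushes this to $(m-1) + 1 + (d-1) + \ldots$, landing at $(m+1)(d-1)$; the $-1$ is the usual generator-degree-to-regularity correction.

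The main obstacle I anticipate is the total-grading bound $\reg_\n \rr(I) \le (m+1)(d-1)-1$: the bigraded local cohomology of $\rr(J)$ and $\m\rr(J)$ must be controlled in a way that is compatible with \emph{both} the $\m$- and $\tt$-gradings simultaneously, and the naive sum of partial regularities overshoots. I expect the sharp bound requires going back to the KU complex $\DD_\bullet^q(\psi)$ (as used in \Cref{IndexOfSat}) or to an explicit bigraded free resolution of $\rr(J)$ as an $S[T_1,\ldots,T_{d+1}]$-module, reading off the top total degree among the last free module's generators, and then applying \eqref{directsumseq} and \eqref{regseq2} with total degrees throughout — the bookkeeping of the two independent degree shifts in \eqref{directsumseq} against the internal shifts of the resolution of $\rr(J)$ is where the real care is needed.
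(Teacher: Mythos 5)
Your skeleton is the same as the paper's: lower bounds from the degrees of the minimal generators of $\A$ (\Cref{mingens}), upper bounds by chasing regularity through the sequences (\ref{regseq1}), (\ref{directsumseq}), and (\ref{regseq2}), with $\reg\ff(I)$ read off from the hypersurface presentation of \Cref{gcdsnotzero}. The genuine gap is the input you flag but do not supply: the values ${\rm reg}_\tt\rr(J)={\rm reg}_\m\rr(J)=0$ and, crucially, ${\rm reg}_\n\rr(J)=d-1$. Your justifications for the first two are heuristic ($\ff(J)\cong k[\T]$ does not by itself control the higher local cohomology of $\rr(J)$ in either partial grading), and for the total grading you correctly observe that one needs an explicit resolution of $\rr(J)$ compatible with all three gradings, but you do not produce one. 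The paper closes this by noting that $\rr(J)\cong\sym(J)$ is defined by the ideal $(\T\cdot\psi)$ with $\psi$ alternating, so the minimal free resolution of \cite[6.3]{Kustin} applies; it is amenable to each grading and yields exactly the three regularities of $\rr(J)$, and then ${\rm reg}\,\m\rr(J)$ in each grading via (\ref{regseq1}). Without this, none of the three upper bounds is established.

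Separately, your degree bookkeeping in the middle of the chase is off in ways that, taken literally, would not give the stated equalities. In the $\tt$-grading the extra contribution from the resolving module $\bigoplus_{i}\m\rr(J)(-i,-(m-i)(d-1))$ is ${\rm reg}_\tt\,\m\rr(J)\leq 1$ (from (\ref{regseq1}) and ${\rm reg}_\tt\ff(J)=0$), not $d-1$ as in your parenthetical ``pushing the shift to $m(d-1)+(d-1)$''; with your figure the chase only gives ${\rm reg}_\tt\rr(I)\leq(m+1)(d-1)-2$, which is weaker than $m(d-1)-1$. The shift by $d-1$ per syzygy step is correct only for the total grading $\n$, where it produces the bound $(m+1)(d-1)-1$. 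Likewise ${\rm reg}_\m\widetilde{\K}^m\leq m$, not $m-1$: the $-1$ occurs only at the final step, because $\widetilde{\A}\cong\widetilde{\K}^m$ sits in first-syzygy position in (\ref{regseq2}); if ${\rm reg}_\m\widetilde{\K}^m$ were $m-1$ you would deduce ${\rm reg}_\m\rr(I)\leq m-2$, contradicting your own lower bound. These are repairable, but as written the computation does not close.
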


\begin{proof}
For the regularity of $\rr(I)$ with respect to $\tt$, it is well-known that $\rt(I) -1 \leq \reg_\tt \rr(I)$ \cite[pp. 2813-2814]{Trung}. Hence $\reg_\tt \rr(I)  \geq m(d-1)-1$, using \Cref{mingens}. Similarly, it follows that $\reg_\m \rr(I) \geq m-1$, by comparing the degrees, with respect to $\m$, of the minimal generators of $\A$ in \Cref{mingens}. Thus it must be shown that $\reg_\tt \rr(I) \leq m(d-1)-1$ and $\reg_\m \rr(I) \leq m-1$, which we show with the remaining inequality simultaneously. We use the sequences (\ref{regseq1}) and (\ref{regseq2}) once more.

Recall that $J$ is of linear type by \Cref{Jlineartype}, hence $\rr(J)\cong \sym(J)$ is a domain defined by $\H = (\ell_1,\ldots,\ell_{d+1})$. Furthermore, recall that $ [\ell_1\ldots\ell_{d+1}]=[T_1\ldots T_{d+1}]\cdot \psi$ and $\psi$ is an alternating matrix. Hence a graded minimal free resolution of $\rr(J)$ is known from section 6 of \cite{Kustin}. Moreover, the resolution given in \cite[6.3]{Kustin} is amenable to each of the gradings, from which it follows that
\[
\begin{array}{ccc}
     {\rm reg}_\tt \rr(J)  =0, &  {\rm reg}_\m \rr(J)  =0, & {\rm reg}_\n \rr(J) =d-1.  
\end{array}
\]
As $J$ is of linear type, its special fiber ring is $\ff(J)  \cong k[T_1,\ldots,T_{d+1}]$. Thus
\[
\begin{array}{ccc}
     {\rm reg}_\tt \ff(J)  =0, &  {\rm reg}_\m \ff(J)  =0, & {\rm reg}_\n \ff(J) =0.  
\end{array}
\]
With this and (\ref{regseq1}), we then have 
\[
\begin{array}{ccc}
     {\rm reg}_\tt\, \m \rr(J)  \leq 1, &  {\rm reg}_\m \,\m \rr(J)  \leq 1, & {\rm reg}_\n\, \m \rr(J)  =d-1.\\
\end{array}
\]

For convenience, write
\[
\begin{array}{ccc}
    M= \displaystyle{\bigoplus_{i=0}^{m-1}\m \rr(J)\big(-i,-(m-i)(d-1)\big)},  &  N= \displaystyle{\bigoplus_{i=0}^{m} \rr(J)\big(-i,-(m-i)(d-1)\big)}   \\ 
\end{array}
\]
for the modules in (\ref{directsumseq}). With the inequalities above, it follows that
\[
\begin{array}{lcl}
   {\rm reg}_\tt M \leq m(d-1)+1,  &\quad &{\rm reg}_\tt N = m(d-1) \\[1ex]
     {\rm reg}_\m M \leq m,  & \quad &{\rm reg}_\m N = m\\[1ex]
     {\rm reg}_\n M \leq (m+1)(d-1), &\quad  &{\rm reg}_\n N = (m+1)(d-1).\\
     \end{array}
\]
Now using (\ref{directsumseq}), it then follows that
\[
\begin{array}{ccc}
     {\rm reg}_\tt \widetilde{\K}^m  \leq m(d-1), &  {\rm reg}_\m \widetilde{\K}^m  \leq m, & {\rm reg}_\n \widetilde{\K}^m \leq (m+1)(d-1).\\
\end{array}
\]
The inequalities above, the
bigraded isomorphism $\widetilde{\A} \cong \widetilde{\K}^m(0,0)$, and the sequence (\ref{regseq2}) give
\[
\begin{array}{ccc}
     {\rm reg}_\tt \rr(I)  \leq m(d-1)-1, &  {\rm reg}_\m \rr(I)  \leq m-1, & {\rm reg}_\n \rr(I) \leq (m+1)(d-1)-1.  
\end{array}
\]
as claimed. 

The assertion on the regularity of $\ff(I)$ is clear as $\ff(I)$ is a hypersurface ring defined by an equation of degree $m(d-1)$ in $k[T_1,\ldots,T_{d+1}]$, by \Cref{gcdsnotzero}. 
\end{proof}


\section*{Acknowledgements}

The author would like to thank Bernd Ulrich and Claudia Polini for many insightful conversations and discussions on the results presented here. The use of \textit{Macaulay2} \cite{M2} was of great assistance in this project.







\end{document}